\let\mathcal\mathscr
\def\bA{{\mathbb A}}
\def\C{{\mathbb C}}
\def\bR{{\mathbb R}}
\def\bN{{\mathbb N}}
\newtheorem{thm}{Theorem}[section]
\def\P{{\bf P}}
\def\R{{\bf R}}
\def\Q{{\bf Q}}
\def\C{{\bf C}}
\def\Spec{\mathop{\rm Spec}\nolimits}
\def\tilde{\widetilde}
\def\phi{\varphi}
\def\cC{{\mathcal C}}
\def\cF{{\mathcal F}}
\def\cL{{\mathcal L}}
\def\cO{{\mathcal O}}
\def\cT{{\mathcal T}}
\def\cU{{\mathcal U}}
\def\cX{{\mathcal X}}
\def\cZ{{\mathcal Z}}
\def\div{\mathop{\rm div}\nolimits}
\def\Card{\mathop{\rm Card}\nolimits}
\numberwithin{equation}{section}
\newtheorem{theorem}[thm]{Theorem}
\newtheorem{claim}[thm]{Claim}
\newtheorem{corollary}[thm]{Corollary}
\newtheorem{definition}[thm]{Definition}
\newtheorem{lemma}[thm]{Lemma}
\newtheorem{proposition}[thm]{Proposition}
\begin{document}

\title {Rational vs transcendental points on analytic Riemann surfaces}
\author{Carlo Gasbarri}\address{Carlo Gasbarri, IRMA, UMR 7501
 7 rue Ren\'e-Descartes
 67084 Strasbourg Cedex}\date{}
\thanks{Research supported by the project ANR-16-CE40-0008 FOLIAGE}

\keywords{Rational points on Riemann surfaces, height theory, Liouville inequality}
\subjclass[2010]{14G40, 14G05, 11G50.}

\begin{abstract} Let $(X,L)$ be a polarized variety over a number field. We suppose that $L$ is an hermitian line bundle. Let $M$ be a non compact Riemann Surface and $U\subset M$ be a relatively compact open set. Let $\varphi:M\to X(\C)$ be a holomorphic map. For every positive real number $T$, let $A_U(T)$ be the cardinality of the set of  $z\in U$ such that $\varphi (z)\in X(K)$ and $h_L(\varphi(z))\leq T$. After a revisitation of the proof of the  sub exponential bound for $A_U(T)$, obtained by Bombieri and Pila ,   we show that there are intervals of $T$'s as big as we want for which $A_U(T)$ is upper bounded by a polynomial in $T$. We then introduce subsets of type $S$ with respect of $\varphi$. These are compact subsets of $M$ for which an inequality similar to Liouville inequality on algebraic points holds. We show that, if $M$ contains a subset of type $S$, then, {\it for every value of $T$} the number $A_U(T)$ is bounded by a polynomial in $T$. As a consequence, we show that if $M$ is a  smooth leaf of a foliation in curves then $A_U(T)$ is bounded by a polynomial in $T$. Let $S(X)$ be the subset (full for the Lebesgue measure) of points which verify some kind of Liouville inequalities.  In the second part we prove that $\varphi^{-1}(S(X))\neq\emptyset$ if and only if  $\varphi^{-1}(S(X))$ is full for the Lebesgue measure on $M$.\end{abstract}

\maketitle

\tableofcontents
\section {Introduction}

In the recent years there have been an interest in the following problem: Let $X$ be a projective variety defined over $\Q$ and $M\subset X(\C)$ be an analytic subset. Can we estimate, as a function of $T$,  the number of {\it rational} points with height less or equal then $T$ which are contained in $M$? A cornerstone in this topic is the paper by Bombieri and Pila \cite{BOMBIERIPILA} which essentially give a satisfactory answer in  the case when $M$ is a real curve in $\R^2$. After Bombieri and Pila's Theorem, a big amount of work on related problems has been done, culminating with the theorem by Pila and Wilkie \cite{pilawilkie} who could understand the problem in the case when $M$ is a definable subset in a suitable $o$--minimal structure. In many researches on this topic, the Bombieri and Pila results have been taken as a "black box" and applied to study more general cases. 

In this paper we would like to analyze again the basic case of rational points in  dimension one  sub varieties  contained a projective variety; we concentrate on the problem of estimating the number of points of bounded height in a Riemann surface which is contained, and Zariski dense, in a projective variety. 

Suppose that $X$ is a projective variety of dimension $n$ defined over $\Q$ (in order to simplify notations, in this introduction we will work over $\Q$, the general case will be treated in the paper). Fix an ample line bundle $L$ over $X$ and a height function $h_L(\cdot)$ associated to it. Let $M$ be a non compact Riemann surface and $U$ a relatively compact open set of it. Let $\varphi: M\to X(\C)$ be a holomorphic map with Zariski dense image. In this paper we are mostly interested in estimating from above, as a function of $T$,  the cardinality $A_U(T)$ of the set $S_U(T):=\{ z\in U \;/\; \varphi(z)\in X(\Q) \; {\rm and}\; h_L(\varphi(z))\leq T\}$.

The starting point of the paper is a revisitation of the Bombieri and Pila Theorem in the complex analytic setting. We give a self contained proof of their theorem which is more inspired to technics of analytic and diophantine geometry. We prove the following:

\begin{theorem}\label{bombieri pila intro} (Bombieri - Pila, cf. Theorem \ref{bombieri pila}) For every positive number $\epsilon$, we have
\begin{equation}
A_U(T)\ll \exp(\epsilon T)
\end{equation}
where the involved constants depend on $U$, $\varphi$, $\epsilon$ and $L$ but not on $T$. \end{theorem}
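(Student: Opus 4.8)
The plan is to follow the classical Bombieri--Pila \emph{determinant method}, but carried out analytically on the Riemann surface rather than on a real-analytic curve in $\mathbb{R}^2$. First I would reduce to a local statement. Since $\overline{U}$ is compact, cover it by finitely many coordinate disks on which $\varphi$ is expressed by convergent power series; it suffices to bound the number of relevant points in one such disk $D$, on which we may assume $\varphi$ extends holomorphically to a slightly larger disk, so all derivatives of the coordinate functions of $\varphi$ are bounded on $\overline{D}$. After choosing a projective embedding by (a power of) $L$ and passing to an affine chart, the rational points $\varphi(z)\in X(\mathbb{Q})$ with $h_L(\varphi(z))\le T$ become points whose affine coordinates are rationals with numerator and denominator bounded by roughly $e^{cT}$ for a constant $c=c(L)$; call this bound $H=H(T)$.

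Next comes the determinant estimate. Fix an integer $d\ge 1$ (to be optimized) and consider the space of monomials of degree $\le d$ in the affine coordinates; it has dimension $N=N(d,n)$. Partition $\overline{D}$ into small subdisks of radius $\rho$. If $z_1,\dots,z_N$ lie in one subdisk and all map to rational points of height $\le T$, form the $N\times N$ matrix whose $(i,j)$ entry is the $i$-th monomial evaluated at $\varphi(z_j)$. On one hand its determinant, if nonzero, is a rational number with denominator at most $H^{cN}$, hence has absolute value $\ge H^{-cN}$. On the other hand, expanding each row of the determinant (viewed as a holomorphic function of the $z_j$) in its Taylor series and using the standard interpolation/Vandermonde-type bound, one gets that $|\det|\le C^{N^2}\rho^{\kappa(N)}$ where $\kappa(N)\sim c' N^{2}/d$ grows quadratically in $N$ and the constant $C$ absorbs the sup-norms of $\varphi$ and its derivatives on $\overline{D}$. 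Comparing the two bounds, if $\rho$ is chosen so that $C^{N^2}\rho^{\kappa(N)} < H^{-cN}$, the determinant must vanish, which means the $N$ points all lie on a single hypersurface of degree $\le d$ in the ambient space.

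Now the Zariski density hypothesis enters. The curve $\varphi(M)$ is Zariski dense in $X$ of dimension $n$; intersecting $\varphi(D)$ with a degree-$d$ hypersurface cannot contain all of $\varphi(D)$, so on each subdisk the rational points of height $\le T$ that are not caught on such a hypersurface number fewer than $N$. Counting: we need roughly $\rho^{-2}$ subdisks to cover $\overline{D}$ (area argument on the disk), and the choice of $\rho$ above forces $\log(1/\rho)\approx \frac{cN\log H + N^2\log C}{\kappa(N)}\approx \frac{c d\,T}{c' N} + O(d)$, using $\log H\asymp T$ and $N\asymp d^{n}$. Thus the number of subdisks, hence the contribution of points lying on auxiliary hypersurfaces, is bounded by $\exp\!\big(O(dT/N)+O(d)\big)=\exp(O(dT/d^{n})+O(d))$; but we must also handle, recursively or by a separate induction on dimension, the points that \emph{do} lie on the degree-$d$ hypersurfaces (these are confined to a proper subvariety, to which one applies the same argument in lower dimension, or an algebraic-curve count). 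The total is a sum of finitely many such terms, each of the shape $\exp(O_d(T))$ with the implied constant $\to 0$ as $d\to\infty$ when $n\ge 2$; choosing $d$ large enough compared to $1/\epsilon$ yields $A_U(T)\ll_\epsilon \exp(\epsilon T)$.

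The main obstacle I expect is the bookkeeping of constants in the determinant estimate: one must show that the Taylor-expansion bound on $|\det|$ really has the exponent $\kappa(N)$ growing like $N^2/d$ (this is where holomorphy is used crucially, via Cauchy estimates on $\overline{D}$, replacing the real-variable Taylor-remainder arguments of Bombieri--Pila), and that the constant $C^{N^2}$ — which depends on $\varphi$, on $\overline{D}$, and on $L$, but not on $T$ — is genuinely independent of $T$ and only costs an $O(d)$ in $\log(1/\rho)$, so that it does not overwhelm the main term $dT/N$. A secondary point requiring care is the descent: organizing the induction on $\dim X$ (or on the degree of the auxiliary hypersurface) so that the lower-dimensional pieces, which are again Zariski dense curves in their own ambient varieties, contribute terms of the same exponential order, and checking that the case $n=1$ (where $\varphi(M)$ Zariski dense forces the image to be a curve and the count is essentially that of rational points of bounded height on a fixed curve, handled directly) serves as a valid base case. \cqfd
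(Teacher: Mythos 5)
Your route is the original Bombieri--Pila determinant method transplanted to the complex-analytic setting, whereas the paper deliberately replaces it by a transcendence-theoretic mechanism: on each small sub-disk $W$ it builds an auxiliary section $s_W\in H^0(\cX,\cL^d)$ of controlled sup-norm via an arithmetic Siegel Lemma \ref{siegel}, then uses the Nevanlinna First Main Theorem together with the Liouville inequality \eqref{liouville1} to force $\varphi^\ast(s_W)$ to vanish at \emph{every} rational point of $S_W(T)$ (Lemma \ref{distance lemma}), and finally invokes the degree bound of Section \ref{degree of a divisor} to conclude that each $s_W$ has $O_d(1)$ zeros on $U$. Both strategies hinge on covering $U$ by $\exp(O(T/d^{n-1}))$ sub-disks of shrinking radius and then taking $d$ large, so the count of "pieces" matches; what differs is how one turns the arithmetic information into a bounded intersection number.

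The genuine gap in your write-up is the last step: after forcing the points in a sub-disk onto a degree-$d$ hypersurface $H$, you propose to "apply the same argument in lower dimension" or use "an algebraic-curve count." Neither works. The intersection $\varphi(M)\cap H$ is a \emph{discrete} set of points on the transcendental curve, not a lower-dimensional analytic curve in $X\cap H$, so there is no dimension descent; and B\'ezout does not apply since $\varphi(M)$ is not algebraic. The required ingredient is an analytic bound on the number of zeros of $\varphi^\ast(s)$ in the relatively compact set $U$, uniform in $s\in H^0(X,L^d)\setminus\{0\}$ for fixed $d$; this is exactly the content of the paper's Theorem \ref{norms and zeros 0} and Proposition \ref{norms and zeros}, proved via Green functions and the First Main Theorem. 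Without such a lemma the final multiplication $(\text{number of sub-disks})\times(\text{points per hypersurface})$ is not justified. A secondary remark: your determinant exponents are internally inconsistent but self-correcting. The lower bound on a non-vanishing determinant should be $H^{-cdN}$ (each entry is a degree-$d$ monomial at a rational point of denominator $\le H$, giving denominator $\le H^{dN}$), not $H^{-cN}$; while the vanishing exponent at the diagonal for $N$ linearly independent holomorphic functions of one variable is $\kappa(N)=N(N-1)/2\sim N^2/2$, not $N^2/d$. The missing factor of $d$ in each place cancels in the ratio, so $\log(1/\rho)\sim dT/N\sim T/d^{n-1}$ comes out right, but as written the intermediate estimates are incorrect.
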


The proof follows only partially the strategy of Bombieri and Pila,  actually it is more related to classical proofs in transcendence: one covers  $U$ with $O(\exp(\epsilon T))$ small open sets of small area. Then we show that on each of these open set the cardinality of the points of height less or equal then $T$ is bounded by a constant. This is done by  constructing, via a form of Siegel Lemma and Liouville inequality, a section of {\it fixed degree} vanishing on each of these points.

There are examples in the literature which show that Theorem \ref{bombieri pila intro} is optimal. Never the less one may wonder for how many $T$'s, the number $A_U(T)$ is indeed big when compared to $T$. We discovered the following interesting fact: there are intervals $I$ as big as we want such that, for every $T\in I$, the number $A_U(T)$ {\it is smaller than a polynomial in $T$}. More precisely we proved:

\begin{theorem}\label{rare distribution 1intro} (cf. Theorem \ref{rare distribution 1}) Let  $A>1$, $\gamma>{{n}\over{n-1}}$ and $\epsilon>0$.  
Then the there are infinitely many intervals $I_t$ of the form $[t,At]$ such that,  for every $T\in I_t$ one has $A_U(T)\leq \epsilon T^\gamma$.  \end{theorem}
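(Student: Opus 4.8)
The plan is to exploit Theorem \ref{bombieri pila intro} together with a counting/averaging argument over a geometric progression of scales. Fix $A>1$, $\gamma>n/(n-1)$ and $\epsilon>0$. The basic idea is this: if the conclusion failed, then for every sufficiently large $t$ there would be some $T=T(t)\in[t,At]$ with $A_U(T)>\epsilon T^\gamma$. In particular, considering the sequence of intervals $[A^k,A^{k+1}]$ for $k\gg 0$, one would obtain, for each large $k$, a value $T_k\in[A^k,A^{k+1}]$ with $A_U(T_k)>\epsilon T_k^\gamma \ge \epsilon A^{k\gamma}$. Since $A_U$ is non-decreasing in $T$, this forces many \emph{new} points to appear in each successive scale: more precisely $A_U(A^{k+1})-A_U(A^{k-1})$ grows at least like a fixed multiple of $A^{k\gamma}$ once the $T_k$ fall in distinct dyadic-type blocks. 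The contradiction will come from comparing the total accumulated growth against the sub-exponential (indeed, much better on a suitable subsequence) bound coming from the finer analysis behind Theorem \ref{bombieri pila intro}.

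More concretely, I would first isolate from the proof of Theorem \ref{bombieri pila intro} the quantitative statement that $U$ can be covered by $O(\exp(\epsilon' T))$ open sets on each of which the number of rational points of height $\le T$ is bounded by a constant $c_0$ depending only on the fixed degree $d=d(\gamma)$ of the auxiliary section (the key point being that $d$ can be taken to depend only on $\gamma$, via $\gamma>n/(n-1)$, and \emph{not} on $T$). This is exactly the mechanism that yields $A_U(T)\le c_0\,N(T)$ where $N(T)$ is the number of covering sets actually needed. The heart of the argument is then to show that $N(T)$, as a function of $T$, cannot be everywhere comparable to $T^\gamma$: one bounds $\sum_{k} \big(A_U(A^{k+1})-A_U(A^k)\big)$ telescopically by $A_U(A^{K})\le c_0 N(A^K)$, while the assumed failure forces this sum to exceed $\sum_{k\le K}\epsilon A^{k\gamma} \sim \epsilon A^{K\gamma}/(A^\gamma-1)$; choosing the covering parameter and the degree appropriately so that $N(A^K)=o(A^{K\gamma})$ along the relevant subsequence produces the contradiction.

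The step I expect to be the main obstacle is making the covering count $N(T)$ genuinely smaller than $T^\gamma$ on the intervals in question — that is, verifying that the Bombieri–Pila covering, when the auxiliary section is taken of degree $d$ adapted to $\gamma$, uses only $O(T^{\gamma-\delta})$ pieces rather than the naive $O(\exp(\epsilon T))$. This is where the constraint $\gamma>n/(n-1)$ enters: it guarantees, via the Siegel-lemma/Liouville-inequality dimension count, that a section of bounded degree exists with enough vanishing to kill a whole cluster of points, so that the number of clusters — hence the number of covering sets one must handle separately — is controlled by a power of $T$ strictly below $\gamma$. I would carry this out by re-running the construction of the auxiliary section of Theorem \ref{bombieri pila intro} with $T$-independent degree, extracting the explicit polynomial bound, and then feeding it into the telescoping/averaging argument above. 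The remaining bookkeeping — absorbing constants depending on $U$, $\varphi$, $L$, and arranging that infinitely many intervals $[t,At]$ work rather than just one — is routine once the polynomial covering bound is in hand.
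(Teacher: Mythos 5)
Your proposal takes a genuinely different route from the paper, and the step you yourself flag as the main obstacle is precisely where the idea breaks down.

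The paper's proof does not use a covering at all. It argues by contradiction with a \emph{single} auxiliary section: assuming the conclusion fails one extracts a sequence $T_n\to\infty$ with $T_{n+1}\le A T_n$ and $A_U(T_n)\ge\epsilon T_n^\gamma$; one fixes $d_1\sim T_1^{\gamma/n}$ and uses the Siegel lemma to produce one section $s\in H^0(\cX,\cL^{d_1})$ of controlled norm vanishing on most of $S_U(T_1)$; then Nevanlinna's First Main Theorem together with Liouville's inequality forces $\varphi^\ast(s)$ to vanish on \emph{all} of $S_U(T_1)$, and --- this is the key inductive step your proposal is missing --- the same argument, applied again with base points in $S_U(T_{n+1})$ and crucially using $T_{n+1}\le AT_n$ to keep the Liouville lower bound comparable with $T_n$, shows that the same section $\varphi^\ast(s)$ vanishes on $S_U(T_n)$ for every $n$. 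Since $|S_U(T_n)|\ge\epsilon T_n^\gamma\to\infty$ but a nonzero holomorphic section has only finitely many zeros on $U$, this is absurd. The contradiction is with the finiteness of the zero set of one fixed section, not with any direct polynomial upper bound for $A_U(T)$.

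Your plan, in contrast, wants a direct polynomial bound $A_U(T)\le c_0N(T)$ with $N(T)=o(T^\gamma)$ on a subsequence, fed into a telescoping sum. The telescoping is actually superfluous (if $A_U(T_K)\ge\epsilon T_K^\gamma$ then $A_U(A^{K+1})\ge\epsilon A^{K\gamma}$ already, with no summation needed); the real issue is the claimed polynomial covering count. If you take the degree $d$ large (say $d\sim T^{\gamma/n}$, so that $T/d^{n-1}\to 0$ when $\gamma>n/(n-1)$) then indeed a single covering piece $W=U$ suffices, so $N(T)=1$ --- but then $c_0$ is no longer a constant: the number of zeros of the auxiliary section on $U$ is bounded via Theorem \ref{norms and zeros 0} by $A\,d + B(\log\Vert s\Vert_{\sup}-\log\Vert s\Vert_W)$, and the only tool available to lower-bound $\log\Vert s\Vert_W$ is Liouville's inequality, which requires a point where $s$ does \emph{not} vanish. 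This is exactly the circularity the paper highlights in section \ref{degree of a divisor}, and it is why the paper does not attempt to prove a clean bound $A_U(T)\le \text{poly}(T)$ on a subsequence at all, but instead runs a contradiction in which the hypothetical non-vanishing point is supplied by the assumed failure $A_U(T_n)\ge\epsilon T_n^\gamma$. Re-running the Bombieri--Pila covering with degree adapted to $\gamma$, as you propose, does not escape this circularity, so your proposal as written does not close.
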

 
We also prove that the union of these $I_t$ is unbounded. The proof uses again a form of Siegel Lemma and of Liouville inequality and an argument by contradiction (which actually prevents to control the sets $I_t$). The reason why we cannot deduce a stronger version of Theorem \ref{bombieri pila intro} from the strategy of proof of Theorem \ref{rare distribution 1intro} relies on the use of the Liouville inequality: in order to obtain an upper bound  for the number of zeroes of an analytic function on $U$, we need an upper bound of the norm of it {\it and an lower bound of its norm on a specific point where it do not vanish}. This principle is explained in section  \ref{degree of a divisor}.

Actually, in order to obtain a good upper bound for $A_U(T)$, it suffices to know a lower bound of the norm of a section of a line bundle, not just on a point (where a priori it could vanish) but over a suitable subset. For this reason we introduce the following definition: we suppose that the holomorphic line bundle $L$ over $X(\C)$ is equipped with a smooth hermitian metric.

\begin{definition}\label{sets of type S intro} (cf. Definition \ref{sets of type S}) Let $B\subset U$ be a compact set and $a$ be a real number. We will say that $B$ is a subset of type $S_a$ of $M$  with respect to $\varphi$ if, we can find a positive constant $A>1$ such that, for every positive integer $d$ and $s\in H^0(\cX.\cL^d)\setminus\{0\}$ we have that
\begin{equation}
\log\Vert s\Vert_B\geq -A(\log^+\|s\|+d)^a.
\end{equation}
\end{definition}
And we prove
\begin{theorem}\label{poly T intro} (cf. Theorem \ref{poly T}) Suppose that we can find a subset $B\subset U$ of type $S_a$ with respect to $\varphi$. Then 
\begin{equation}
A_U(T)\ll T^{2a}.
\end{equation}
\end{theorem}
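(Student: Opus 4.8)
The plan is to combine the covering argument of Theorem~\ref{bombieri pila} with the Liouville-type inequality encoded in the definition of a subset of type $S_a$. The point is that in the proof of Theorem~\ref{bombieri pila} one shows that on a sufficiently small open set the rational points of height $\leq T$ all lie in the zero locus of a section $s$ of $\cL^d$ for an appropriate degree $d$; the size of $d$ needed depends on the \emph{number} of points one has to kill. The new input here is that the existence of $B$ of type $S_a$ allows one to control, uniformly over all of $U$ (not just on a single small ball), the number of zeros of such a section in terms of $d$ and $\log^+\|s\|$.

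First I would make precise the ``degree of a divisor'' principle from Section~\ref{degree of a divisor}: if $s\in H^0(\cX,\cL^d)\setminus\{0\}$ and $z_1,\dots,z_N\in U$ are distinct points with $\varphi(z_i)\in X(\Q)$, $h_L(\varphi(z_i))\leq T$, and $s(\varphi(z_i))=0$, then comparing the Liouville lower bound for $\|s\|$ at a point of $B$ where it does not vanish (of the shape $\log\|s\|_B\geq -A(\log^+\|s\|+d)^a$) against an upper bound for $\|s\|_B$ coming from the vanishing at the $z_i$ (a Schwarz-lemma / Jensen-type estimate that loses roughly $N$ units of $\log$-norm), together with an arithmetic upper bound $\log^+\|s\|\ll d\cdot T$ for a section produced by Siegel's lemma, one gets $N\ll (dT)^a$ up to constants. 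I would then feed in the Siegel-lemma construction: for any finite set of $P$ rational points of height $\leq T$ on $\varphi(U)$ there is a nonzero section of $\cL^d$ vanishing at all of them provided $h^0(\cX,\cL^d)\sim d^n$ exceeds a constant times $P$, so $d\approx P^{1/n}$ suffices, and its $\log$-norm is $\ll dT$. Combining, $P\ll (P^{1/n}\,T)^a$, which for $a<n$ rearranges to $P\ll T^{a/(1-a/n)}$; choosing the covering and bookkeeping carefully (the cleanest route is to apply this on a fixed finite cover of $\overline U$, or directly on $U$) should yield $A_U(T)\ll T^{2a}$ after absorbing the loss in the Schwarz lemma — the exponent $2a$ rather than the naive $a$ coming from the fact that one must iterate or split the points into two groups because the section that vanishes on the first batch is used to bound the second.

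The main obstacle I expect is the interplay between the degree $d$, the number of points $N$, and the norm bound, i.e.\ getting the exponent exactly $2a$ rather than something worse: the Schwarz/Jensen estimate for $\|s\|_B$ in terms of the number of zeros is not completely free because $B$ need not be a disk and $\varphi$ need not be injective, so one has to work with $\varphi^*\cL$ on $M$ and control the multiplicities and the geometry of $U$ versus $B$. A secondary technical point is that Definition~\ref{sets of type S} involves $\log^+\|s\|$, the ``arithmetic'' norm of $s$ viewed over the number field, whereas the Schwarz-lemma estimate naturally produces the archimedean sup-norm on $U$; reconciling these requires the standard comparison between the sup-norm on a compact set and the height of the section, which is exactly where the hypothesis that $L$ is ample and the metric is smooth is used. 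Once these are in place, the final inequality $A_U(T)\ll T^{2a}$ follows by the contradiction/optimization argument: if $A_U(T)$ were larger, one could choose $d$ of size roughly $A_U(T)^{1/n}$, build a section vanishing on all those points with controlled norm, and violate the type-$S_a$ lower bound.
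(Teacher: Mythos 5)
Your high-level strategy (Siegel lemma, a Liouville-type lower bound at $B$, and the zero-counting principle of Section~\ref{degree of a divisor}) is the right one and is also the paper's, but the exponent arithmetic you lay out does not close, and the reason it fails points at the one genuinely essential step you have omitted.

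You propose taking $d\approx P^{1/n}$ with $P=A_U(T)$, so that Siegel produces a section of $\cL^d$ vanishing on \emph{all} $P$ points, with $\log^+\|s\|\ll dT$, and then combine this with the zero-count $N\ll(dT)^a$ to get $P\ll (P^{1/n}T)^a$, hence $P\ll T^{an/(n-a)}$. But the paper notes after Definition~\ref{sets of type S} that subsets of type $S_a$ can only exist when $a\ge\dim(X_K)+1>n$, so $n-a<0$ and the rearrangement you wrote is vacuous. Concretely: if $P\approx T^{2a}$ then your $d\approx T^{2a/n}$ gives $(dT)^a\approx T^{a(2a/n+1)}$, which, because $2a/n+1>2$, is \emph{larger} than $T^{2a}$, so no contradiction is obtained. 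Forcing $d$ to be large enough for Siegel to kill all of $S_U(T)$ at once is incompatible with keeping the $S_a$ lower bound sharp.

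What the paper does instead (Theorem~\ref{poly T}, following the mechanism of Theorem~\ref{rare distribution 1}) is to fix $d$ as a power of $T$ alone, roughly $d\approx T^{\gamma/n}$ with $\gamma$ just above the dimension, \emph{independently of $P$}. Siegel then produces a section $s\in H^0(\cX,\cL^d)$ with $\log\|s\|_{\sup}\ll dT$ vanishing only on a batch $H_U(T)\subset S_U(T)$ of cardinality $\sim d^n$. The crucial step you gesture at but do not implement is the Liouville extension argument: applying Nevanlinna's First Main Theorem with base point an arbitrary $w_0\in S_U(T)$ and comparing the Green-function contribution of the $\sim d^n$ known zeros against the Liouville lower bound $\log\|\varphi^*s\|(w_0)\ge -[K:\Q]\,T(\log\|s\|+d)$ shows that $\varphi^*(s)$ must in fact vanish on \emph{every} point of $S_U(T)$, not just the batch. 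Only then does one invoke Theorem~\ref{norms and zeros 0}: $\deg_U(s)\ll d+\bigl(\log\|s\|_{\sup}-\log\|s\|_B\bigr)$, and the $S_a$ hypothesis $\log\|s\|_B\ge -A(\log^+\|s\|+d)^a$ with $\log^+\|s\|\ll dT$ gives $A_U(T)\le\deg_U(s)\ll(dT)^a\approx T^{2a}$ (up to $\epsilon$). The decoupling of $d$ from $P$, made possible by the Liouville extension, is exactly what rescues the exponent; without it the argument cannot reach $T^{2a}$ for any $a$ in the admissible range $a>n$.

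A second, smaller point: you describe the lower bound $\log\|s\|_B\ge -A(\log^+\|s\|+d)^a$ as a ``Liouville lower bound at a point of $B$ where $s$ does not vanish,'' but the hypothesis on $B$ is stated for the sup-norm over $B$, precisely so that one does not need to exhibit a non-vanishing point; this is the whole purpose of replacing a point of type $S_a$ by a \emph{subset} of type $S_a$, and it feeds cleanly into Theorem~\ref{norms and zeros 0} which is formulated in terms of $\log\|s\|_W$.
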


Of course, in general it is  not be easy to guarantee the existence of a subset of type $S_a$ with respect to $\varphi$. Never the less there is an important case of maps where we can guarantee the existence of such sets: the leaves of foliations. Suppose that $X$ is equipped with a foliation by curves $\cF$ (a priori not smooth). We can prove that every relatively compact open neighborhood of a rational point of a leaf is of type $S_a$ for a suitable $a$ (cf. after) and consequently we find:

\begin{theorem}\label{foliations intro} (cf. Theorem \ref{foliations}) Let $\cF$ be a foliation on a smooth quasi projective variety $Z$ defined over a number field $K$. Let $p\in Z(K)$ be a rational point and $h:\Delta_1\to Z_\sigma(\C)$ be the analytic leaf  of $\cF$ through $p$. Suppose that the dimension of the Zariski closure of the leaf is $\ell>1$. Let $0<r<1$ be a real number, then, for every $\epsilon>0$ we have
\begin{equation}
A_{\Delta_r}(T)\ll_\epsilon T^{2\ell+\epsilon}.
\end{equation}\end{theorem}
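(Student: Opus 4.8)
The plan is to deduce Theorem \ref{foliations} from Theorem \ref{poly T intro} by showing that a small closed disc around the rational point $p$ on the leaf is a subset of type $S_a$ with $a = \ell$, in an appropriate model $(\cX,\cL)$ obtained by taking the Zariski closure $Y$ of the leaf in (a projective compactification of) $Z$, equipped with a polarization. The heart of the matter is the following Liouville-type estimate: for a section $s\in H^0(\cX,\cL^d)\setminus\{0\}$, one wants a lower bound on $\log\|s\|_B$ of the shape $-A(\log^+\|s\|+d)^\ell$. The key geometric input is that the leaf $h:\Delta_1\to Y_\sigma(\C)$ has Zariski-dense image in the $\ell$-dimensional variety $Y$, so $s$ does not vanish identically along $h$; thus $f:=h^*s$ is a nonzero holomorphic function on $\Delta_1$, and $\log\|f\|_{\Delta_r}$ on a slightly smaller disc controls the number of its zeros and, by the usual Schwarz–Jensen / Liouville machinery on algebraic points, controls $\log\|s\|_B$ from below — provided one has a \emph{lower} bound for the order of vanishing of $f$ at the origin, or rather for the first non-vanishing Taylor coefficient of $f$ at $p$.

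First I would set up the arithmetic: since $\cF$ is defined over $K$ and $p\in Z(K)$, the leaf has a formal/analytic expansion at $p$ with coefficients controlled arithmetically — this is where the theory of foliations over number fields enters. The precise tool is a bound on the denominators and heights of the Taylor coefficients of the leaf at $p$: the $N$-th coefficient has denominator dividing something like $N!$ times a fixed integer, and logarithmic height $O(N\log N)$. Such ``arithmetic of the leaf'' statements are classical (going back to the analytic continuation of solutions of algebraic ODE's, and used in the work of André, Bost, and others on foliations); I would cite the relevant statement. Given this, the first non-zero jet of $f=h^*s$ at $p$, say of order $m$, has a leading coefficient which is an algebraic number whose house and denominator are bounded in terms of $d$, $\log^+\|s\|$ and $m$. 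Applying the usual Liouville inequality (the one invoked in section \ref{degree of a divisor}) to this algebraic number gives a lower bound for its absolute value, hence for $\sup_{\Delta_\rho}|f|$ on a small disc, hence a lower bound for $\log\|s\|_B$.

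The quantitative bookkeeping then goes as follows. The order of vanishing $m$ of $f$ at $p$ is at most the order of vanishing of $s$ along the leaf, which — because the leaf is Zariski dense in the $\ell$-dimensional $Y$ — is bounded by the ``multiplicity'' estimates: an order of vanishing bigger than $C\cdot d^{1/\ell}$ along a Zariski-dense analytic curve in $\ell$-space would force $s^{1/?}$ to vanish to high order at too many independent directions, contradicting $\deg \cL^d = O(d^\ell)$ and Bézout on $Y$. So $m \ll d^{1/\ell}$ (this is the multiplicity/zero-estimate step, the analogue of the classical fact that a degree-$d$ curve in $\bP^2$ has singularities of multiplicity $\le d$). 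Feeding $m\ll d^{1/\ell}$ into the Liouville inequality, the leading coefficient of $f$ has denominator and height polynomial in $d$ and $\log^+\|s\|$, with the height coming out like $m\log m + m(\log^+\|s\| + d\cdot(\text{stuff})) \ll d^{1/\ell}\cdot d \cdot \log d + \dots$; combining with Schwarz' lemma on $\Delta_1 \supset \Delta_r$ one gets $\log\|s\|_B \ge -A(\log^+\|s\| + d)^\ell$ for a suitable $A$ (the exponent $\ell$ is forced by the product of $m\sim d^{1/\ell}$ against a linear-in-$d$ height, together with the $\ell-1$ further powers of $d$ hidden in the arithmetic of the coefficients; one should double-check the exact power, but $\ell$ is the natural outcome). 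This is exactly the defining inequality of a subset of type $S_\ell$, so Theorem \ref{poly T intro} yields $A_{\Delta_r}(T)\ll T^{2\ell}$, and the extra $\epsilon$ in the exponent absorbs the logarithmic factors and the passage from $\ll T^{2\ell}$ to $\ll_\epsilon T^{2\ell+\epsilon}$.

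The main obstacle, and the step requiring the most care, is the zero estimate bounding the order of vanishing $m$ of a section of $\cL^d$ along a Zariski-dense leaf by $O(d^{1/\ell})$: one must genuinely use that the Zariski closure of the leaf is $\ell$-dimensional (if it were a curve, $m$ could be as large as $\sim d$, and then the argument would only give type $S_1$ — which is consistent with the hypothesis $\ell > 1$ in the statement). Making this precise requires either a Bézout-type argument on $Y$ relating multiplicity along the curve to intersection with generic members of $|\cL^d|$, or an induction on a flag of subvarieties through $p$; either way one must be careful that the constants are uniform in $d$ and depend only on $(Y,\cL)$ and the leaf. A secondary technical point is choosing the integral model $(\cX,\cL)$ over $\Spec \cO_K$ so that ``arithmetic of the leaf at $p$'' and ``Liouville inequality for jets of sections'' are compatible; this is routine but must be stated cleanly, and is where the dependence of the implied constants on $p$, $\cF$, and $Z$ gets absorbed.
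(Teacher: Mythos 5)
Your overall strategy is the same as the paper's: show that a small disc around $p$ is a subset of type $S_a$ with respect to $h$ (after replacing $Z$ by a resolution of the Zariski closure of the leaf), then quote Theorem \ref{poly T}; the lower bound on $\log\Vert s\Vert_B$ comes from an arithmetic Liouville inequality for the first nonvanishing jet of $h^\ast s$ at $p$, a multiplicity bound on that jet's order, and a Schwarz/Jensen (in the paper, Nevanlinna First Main Theorem) step to transfer a lower bound at the jet into a lower bound for the sup norm on a small circle.

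However, the step you correctly flag as ``the main obstacle'' is wrong, both in its claimed exponent and in its proposed justification. You assert $m=\mathrm{ord}_{\cF,p}(s)\ll d^{1/\ell}$ and try to prove it by B\'ezout and a degree count on the $\ell$-dimensional closure $Y$. The correct bound is $m\ll d^{\ell}$, not $d^{1/\ell}$, and it is sharp: since $h^0(Y,L^d)\sim d^{\ell}$, one can always impose $\sim d^{\ell}$ jet conditions along the formal leaf and produce a nonzero section vanishing to order $\sim d^{\ell}$ at $p$. Moreover, no B\'ezout-type argument can produce a polynomial bound from Zariski-density alone: a general transcendental curve Zariski dense in $Y$ can hug an algebraic curve to arbitrarily high order at $p$, so sections of $\cL^d$ can vanish along it to order unbounded in $d$. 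The polynomial bound $m\ll d^{\ell}$ holds precisely because the leaf solves an algebraic ODE; this is the Nesterenko multiplicity estimate (generalized by Binyamini), quoted in the paper as Propositions \ref{zero lemma} and \ref{nesterenko1}. Your argument does not invoke this result and replaces it by a false bound with an invalid justification, so the proof as written has a genuine gap at its central step.

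Once the multiplicity bound is corrected to $m\ll d^{\ell}$, the jet Liouville estimate (Proposition \ref{foliated liouville}, $\log\Vert j^m(s)\Vert\geq -C\bigl(m\log m+d+\log^{+}\Vert s\Vert\bigr)$) gives $\log\Vert j^m(s)\Vert\geq -C'\bigl(d^{\ell}\log d+d+\log^{+}\Vert s\Vert\bigr)\geq -C''\bigl(\log^{+}\Vert s\Vert+d\bigr)^{\ell+\epsilon}$; the $\log d$ factor is exactly why the disc is only of type $S_{\ell+\epsilon}$ and not $S_{\ell}$. Theorem \ref{poly T} then gives $A_{\Delta_r}(T)\ll T^{2\ell+\epsilon}$ after relabelling $\epsilon$. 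Your final exponent happens to coincide with the statement, but only because the internal bookkeeping (the heuristic ``$\ell-1$ further powers hidden in the arithmetic of the coefficients'' does not correspond to anything in the Liouville estimate) compensates for the incorrect multiplicity bound; the computation does not go through as stated.
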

Here and after $\Delta_r$ will be the disk $\{ |z|<r\}$ in $\C$. In order to prove this, we require a Zero Lemma which have been proved by Nesterenko and generalized by Binyamini which holds for smooth points of foliations and for some kind of singularities of them. For this reason, the method could be generalized to the case some kind of singular points of the foliation but we do not think that it would be the case for a general singularity. 

The bigger the subset $B$ is and the easier should be to find it. But, in order to deduce the consequence on the number of rational points, it suffices that $B$ is non empty. In particular it can be a single point. In a previous paper \cite{gasbarri2}, we proved that the set $S_a(X)$ of  points of type $S_a$ in $X(\C)$ (definition similar to Definition \ref{sets of type S intro}, cf. Definition \ref{points of type S 1}) are full in $X(\C)$ for the Lebesgue measure as soon as $a\geq n+2$. In the last part of this paper we  prove the following:

\begin{theorem}\label{points of type S intro} For every $a>0$, $\varphi^{-1}(S_a(X))\neq\emptyset$ if and only if $\varphi^{-1}(S_a(X))$ is full for the Lebesgue measure on $M$. 
\end{theorem}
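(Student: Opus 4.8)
The plan is to argue by a zero–one law based on the fact that membership in $\varphi^{-1}(S_a(X))$ is essentially governed by an analytic (Liouville-type) inequality, so that the set of "good" points is, up to a set of measure zero, determined locally. Concretely, I would first give an intrinsic description of $\varphi^{-1}(S_a(X))$: a point $z\in M$ lies in $\varphi^{-1}(S_a(X))$ precisely when $\varphi(z)$ satisfies, for all $d$ and all $s\in H^0(\cX,\cL^d)\setminus\{0\}$, an estimate $\log\|s\|(\varphi(z))\ge -A(\log^+\|s\|+d)^a$ for some constant $A=A(z)$. The first step is to rewrite this as a countable condition: for fixed integers $d$ and fixed $A\in\bN$, the set $E_{A,d}\subset M$ of points failing the inequality for some $s\in H^0(\cX,\cL^d)$ is closed (a finite-dimensional $\sup$ over the unit sphere of sections, pulled back by the continuous map $\varphi$), and $\varphi^{-1}(S_a(X))=\bigcup_{A}\bigcap_{d}\big(M\setminus E_{A,d}\big)$. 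Thus $\varphi^{-1}(S_a(X))$ is a Borel set and its complement is a countable union of the closed sets $\bigcup_d E_{A,d}$.

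The key step is then a local-to-global propagation principle for the bad set. Suppose $\varphi^{-1}(S_a(X))\ne\emptyset$, say $z_0\in\varphi^{-1}(S_a(X))$. I would like to show that for \emph{every} $z_1\in M$ the point $z_1$ is again of type $S_a$ (possibly with a larger constant), which immediately gives $\varphi^{-1}(S_a(X))=M$ — hence full. The mechanism is the same Liouville/Schwarz-lemma interpolation used in Section~\ref{degree of a divisor}: given a section $s\in H^0(\cX,\cL^d)$, the function $z\mapsto \log\|s\|(\varphi(z))$ is (locally) the sum of a plurisubharmonic-type term and a smooth term, and on a relatively compact connected chain of disks joining $z_0$ to $z_1$ one controls the value at $z_1$ in terms of the value at $z_0$ and the $\sup$-norm of $s$ on a compact set, with a loss that is at worst polynomial in $d$ and linear in $\log^+\|s\|$ (this is exactly where the maximum principle on annuli, together with the fact that $\|s\|_{\sup}$ over a fixed compact of $X(\C)$ is bounded by $C^d\|s\|$, enters). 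Conjugating the $S_a$ inequality at $\varphi(z_0)$ through this comparison yields an $S_a$ inequality at $\varphi(z_1)$ with a new constant $A'$ depending on the chain but not on $d$ or $s$. Since $M$ is connected, any two points are joined by such a chain, so nonemptiness forces $\varphi^{-1}(S_a(X))=M$.

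I expect the main obstacle to be making the propagation estimate uniform in the section $s$ and in the degree $d$ simultaneously, i.e. showing that the "loss" in passing from $z_0$ to $z_1$ is $O((\log^+\|s\|+d)^a)$ rather than something worse like $O(d^a\log^+\|s\|)$. The delicate point is that $\|s\|(\varphi(z))$ can vanish, so one cannot simply take logarithms and run a Harnack-type inequality; instead one must work with the Green/Jensen formula for $\log\|s\circ\varphi\|$ on a disk, bound the Nevanlinna-type counting term by the degree of the divisor (controlled by $\cL^d$, hence $\ll d$ times a geometric constant via the projection-formula/height comparison of Section~\ref{degree of a divisor}), and absorb the proximity term into the $\sup$-norm. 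Once this quantitative Schwarz lemma along a chain is in place, the zero–one law is immediate: either no point is of type $S_a$, or every point is, and in the latter case fullness (indeed equality with $M$) holds trivially; since a full set is in particular nonempty, the two conditions in the statement are equivalent. I would close by noting that this also recovers, via Theorem~\ref{poly T intro}, that as soon as a single point of $M$ maps into $S_a(X)$ one gets the polynomial bound $A_U(T)\ll T^{2a}$ on all of $U$.
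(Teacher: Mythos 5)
Your proposed ``propagation'' step --- showing that a single $z_0\in\varphi^{-1}(S_a(X))$ forces \emph{every} $z_1\in M$ to lie in $\varphi^{-1}(S_a(X))$, hence $\varphi^{-1}(S_a(X))=M$ --- cannot work, and the conclusion $\varphi^{-1}(S_a(X))=M$ is false in general. The function $z\mapsto\log\Vert s\Vert(\varphi(z))$ is subharmonic up to a smooth correction (coming from $c_1(L)$), and a subharmonic function can be $-\infty$ at an interior point while being well behaved at a nearby base point and bounded above on the whole domain: $\log|z-z_1|$ on $\Delta_1$ equals $\log|z_1|>-\infty$ at $z=0$, is $\le 0$ everywhere, yet is $-\infty$ at $z_1$. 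Concretely, whenever $\varphi(z_1)$ lies on the zero divisor of some $s\in H^0(\cX,\cL^d)$ --- which happens at a countable, typically dense, set of $z_1$ --- that $z_1$ fails the $S_a$ inequality for trivial reasons, and no Harnack/Schwarz/Jensen comparison along a chain of disks from $z_0$ can produce a finite lower bound there. You flag this as the delicate point, but the suggested remedy (Green/Jensen plus a bound on the counting term) controls only the \emph{number} of zeros and the \emph{mean} of $\log\Vert s\circ\varphi\Vert$, never its value at a prescribed target; a pointwise quantitative Schwarz lemma of the kind your argument needs simply does not exist for subharmonic functions.

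The paper's proof is inherently measure-theoretic, and this is the substance of the theorem, not a cosmetic choice. From a set $B$ of type $S$ (a single point suffices) it uses the Bloch--Cartan estimate (Proposition \ref{bloch cartan2}) through Theorem \ref{main estimate} to bound, for each fixed $d$ and each section $s$, the \emph{area} of the bad set where $\log\Vert\varphi^\ast(s)\Vert$ is too negative; it then sums these areas over all $s$ of bounded norm via the lattice-point count \eqref{eq:counting1} and over all $d$, obtaining a convergent series, and applies Borel--Cantelli (Proposition \ref{borelcantelli}) to conclude that almost every $z$ lies in only finitely many bad sets --- which is exactly the $S_a$ condition at $z$. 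The result is fullness, not equality: the exceptional set (containing all preimages of zeros of sections) is nonempty but of measure zero. As written, your argument proves too much and therefore does not prove the theorem; to repair it you would have to replace the chain-of-disks propagation by this area-summation/Borel--Cantelli scheme.
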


Which means that, as soon as the image of $\varphi$ touches $S_a(X)$, it is almost totally contained in it, and consequently it contains few rational points. 

Another interesting consequence of Theorem \ref{points of type S intro} is that, as soon as $\varphi^{-1}(S_a(X))\neq\emptyset$, {\it every relatively compact open subset of $M$ is of type $S_a$ for some $a$}. What we actually prove is a bit more general then Theorem \ref{points of type S intro}: in order to obtain the fullness of the set $\varphi^{-1}(S_a(X))$, it suffices the existence of a sufficiently small subset of $M$ of type $S_a$ with respect to $\varphi$. Thus, an interesting corollary of Theorem \ref{points of type S intro} is: 

\begin{theorem}\label{foliations and S points intro} Let $\cF$ be a foliation on a smooth quasi projective variety $Z$ defined over a number field $K$. Let $p\in Z(K)$ be a rational point and $h:\Delta_1\to Z_\sigma(\C)$ be the analytic leaf  of $\cF$ through $p$. Then, if $h(\Delta_1)$ is Zariski dense in $Z$, we have that $h^{-1}(S(Z_K))$ is full in $\Delta_1$.\end{theorem}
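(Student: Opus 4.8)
The plan is to deduce Theorem \ref{foliations and S points intro} from the more general version of Theorem \ref{points of type S intro} together with the mechanism already set up for foliations. First I would recall from the discussion preceding Theorem \ref{foliations intro} that for a leaf $h:\Delta_1\to Z_\sigma(\C)$ through a rational point $p\in Z(K)$, whose Zariski closure has dimension $\ell>1$, the Zero Lemma of Nesterenko--Binyamini guarantees that every relatively compact subdisk $\Delta_r$ (indeed every relatively compact open neighborhood of $p$ inside the leaf) is a subset of type $S_a$ with respect to $h$ for a suitable exponent $a$ depending only on $\ell$ and the geometry. When $h(\Delta_1)$ is Zariski dense in $Z$, this applies with $\ell=\dim Z=n$, so in particular there exists a (small, compact) subset $B\subset\Delta_1$ of type $S_a$ with respect to $h$.

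Next I would invoke the sharper statement announced just after Theorem \ref{points of type S intro}: to conclude that $h^{-1}(S_a(Z_K))$ is full for the Lebesgue measure on $\Delta_1$, it is enough to exhibit one sufficiently small subset of $\Delta_1$ of type $S_a$ with respect to $h$. The previous paragraph supplies exactly such a $B$. Hence $h^{-1}(S_a(Z_K))$ is full in $\Delta_1$; taking the union over $a$ (or observing that $S(Z_K)=\bigcup_a S_a(Z_K)$ in the sense used in Definition \ref{points of type S 1}) gives that $h^{-1}(S(Z_K))$ is full in $\Delta_1$, which is the assertion.

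The one point that needs genuine care, rather than being a mechanical chaining of the two inputs, is verifying that the hypotheses of the general form of Theorem \ref{points of type S intro} are literally met: namely that the subset of type $S_a$ produced by the Zero Lemma is ``sufficiently small'' in whatever precise sense the fullness statement requires, and that the map $h$ plays the role of $\varphi$ legitimately even though $Z$ is only quasi-projective (one compactifies $Z$ to a projective $X$ with a polarization $L$ extending a chosen ample bundle, and checks that $p$ and the leaf land in the locus where the metric is smooth). I expect the main obstacle to be exactly this bookkeeping: matching the ``small subset of type $S_a$'' produced near Theorem \ref{foliations intro} with the precise smallness and compactness conditions demanded by the strengthened Theorem \ref{points of type S intro}, and confirming that Zariski density of the leaf forces the Zero Lemma to apply with the top-dimensional exponent rather than a smaller one. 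Once these compatibilities are in place, the proof is the two-line composition sketched above, and I would present it as such, with the bulk of the writing devoted to the compactification and smoothness normalization.
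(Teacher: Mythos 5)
Your proposal is correct and follows essentially the same route as the paper: the one-line proof given there simply notes that the argument for Theorem \ref{foliations} already shows that every neighborhood of $p$ in $\Delta_1$ is of type $S$ with respect to $h$ (in fact the boundary circle $\partial\Delta_{r_0}$ is, for any small $r_0$, via Nesterenko--Binyamini's Zero Lemma and the foliated Liouville inequality), and then invokes Theorem \ref{S sets}, which is exactly the strengthened form of Theorem \ref{points of type S intro} you cite. The one bookkeeping item you flagged — the ``sufficiently small'' requirement — is resolved by observing that $\delta$ in Theorem \ref{S sets} can be taken to be $\ln(2)$ in the hyperbolic metric, and since every small circle around $p$ is of type $S$, one can certainly choose one of diameter less than $\delta$; your other worry, about whether the Zero Lemma applies with the top-dimensional exponent, is moot because Theorem \ref{S sets} requires only type $S=\bigcup_a S_a$, not any particular exponent.
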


The proof of Theorem \ref{points of type S intro} is quite long and relies on the standard Borel--Cantelli Lemma \ref{borelcantelli}, and an estimate of the area of the set where the norm of a section of a hermitian line bundle on a Riemann surface is small. This estimate relies on some tools in compex analysis and the classical Bloch--Cartan estimate (cf. the beginning of sub section \ref{section small sections}). 

In the last section \ref{conclusion} we address some questions which arise naturally and that, we hope, will be clarified in a future.

\section{Notations and basic facts from arithmetic  geometry, complex analysis and measure theory.}

%Green functions and basic facts from Nevanlinna theory. Recall Liouville inequality. 

%Describe how the FMT varies when we change the base point.

\

\subsection{Tools and notations from arithmetic geometry and Arakelov theory} 

Let $K$ be a number field and $O_K$ be its ring of integers. We will denote by $M_K^\infty$ the set of infinite places of $K$. We fix a place $\sigma_0\in M_K^\infty$. 

Let $X_K$ be a projective variety of dimension $N$ defined over $K$

If $\tau\in M_K^\infty$  and $F$ is an object over $X_K$ ($F$ may be a sheaf, a divisor, a cycle...), we will denote by $X_\tau$ the complex variety $X_K\otimes_\tau\C$ and by $F_\tau$ the restriction of $F$ to $X_\tau$. 

A model $\cX\to\Spec(O_K)$ of $X_K$ is a  flat projective $O_K$ scheme whose generic fiber is isomorphic to $X_K$. Suppose that $L_K$  and $\cX$ are  respectively a line bundle over $X_K$ and a model of it; We will say that a line bundle $\cL$ over $\cX$ is a model of $L_K$ if its restriction to the generic fiber is isomorphic to $L_K$. 

If $\cX$ is a model of $X_K$, an hermitian line bundle $\overline\cL=(\cL, \Vert\cdot\Vert_\sigma)_{\sigma\in M_K^\infty}$ is a line bundle over it equipped, or every $\tau\in M_K^\infty$ a metric on $L_\tau$ with the condition that, if $\sigma=\overline\tau$ then the metric over  $L_\sigma$ is the conjugate of the metric on $L_\tau$.

If $X_K$ is a projective variety, it is easy to see that for every line  bundle $L_K$  on $X_K$, we can find an embedding $\iota : X_K\hookrightarrow P_K$,  where $P_K$ is a smooth projective variety and $L=\iota^\ast(M)$ with $M$ line bundle on $P_K$. A metric on $K_L$ will be said to be smooth if it is the restriction of a smooth metric on $M$.

Let $\overline\cL=(\cL, \Vert\cdot\Vert_\sigma)_{\sigma\in M_K^\infty}$ be an hermitian line bundle on a model $\cX$  of $X_K$. If $s\in H^0(X_K, L^d_K)$ is a non zero section, we will denote by $\log^+\Vert s\Vert$ the real number $\sup_{\tau\in M_K^\infty}\{ 0,\log\Vert s_\tau\Vert_\tau\}$. More generally, of $a$ is a real number,  we will denote by $a^+$ the real number $\sup\{a,0\}$ and by $a_+$ the real number $\sup\{ 1, a\}$.

\begin{definition} An {\rm arithmetic polarization $(\cX,\overline\cL)$ of $X_K$} is the choice of the following data:

\itemize
\item An ample line bundle $L_K$ over $X_K$
\item A projective model $\cX\to\Spec(O_K)$ of $X_K$ over $O_K$.
\item A relatively ample line bundle hermitian line bundle $\overline\cL$ over $\cX$ which is a model of $L_K$.
\item For every $\tau\in M_K^\infty$ we suppose that the metric on $L_\tau$ is smooth and positive.

\end{definition}
\enditemize

We recall the following standard facts of Arakelov theory:

-- If $L$ is an hermitian line bundle over $\Spec(O_K)$ and $s\in L$ is a non vanishing section, we define
\begin{equation}
\widehat{\deg}(L):=\log(Card(L/sO_k))-\sum_{\sigma_in M^\infty_K}\log\Vert s\Vert.
\end{equation}
If $E$ is an hermitian vector bundle of rank $r$ on $\Spec(O_K)$, we define $\widehat{\deg}(E):=\widehat{\deg}(\wedge^{r} E)$ and the slope of $E$ is $\widehat{\mu}(E)={{\widehat{\deg}(E)}\over{r}}$.  

-- Within all the sub bundles of $E$ there is one whose slope is maximal, we denote by $\widehat{\mu}_{\max}(E)$ its slope. It is easy to verify that $\widehat{\mu}_{\max}(E_1\oplus E_2)=\max\{ \widehat{\mu}_{\max}(E_1), \widehat{\mu}_{\max}(E_2)\}$.

 We will need the following version of the Siegel Lemma: 

\begin{lemma}\label{siegel} (Siegel Lemma) Let $E_1$ and $E_2$ be hermitian vector bundles over $O_K$. Let $f:E\to E_2$ be a non injective linear map. Denote by $m=rk(E_1)$ and $n=rk(Ker(f))$. Suppose that there exists a positive real constant $C$ such that:

a) $E_1$ is generated by elements of $\sup$ norm less or equal than $C$.

b) For every infinite place $\sigma$ we have $\Vert f\Vert_\sigma\leq C$ 

Then there exists an non zero element $v\in Ker(f)$ such that
\begin{equation}
\sup_{\sigma\in M^\infty_K}\{ \log\Vert v\Vert_\sigma\}\leq{{m}\over{n}}\log(C^2)+\left( {{m}\over{n}}-1\right) \widehat{\mu}_{\max}(E_2)+3\log(n)+A
\end{equation}
where $A$ is a constant depending only on $K$. 
\end{lemma}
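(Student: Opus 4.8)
\emph{The plan} is to run the slope method of Bost and Gillet--Soul\'e: I would view $\Ker(f)$ as a hermitian vector bundle over $O_K$, bound its arithmetic slope from below with the help of the two hypotheses, and then extract the vector $v$ from it by Minkowski's convex body theorem --- the ``one--section'' case of Siegel's lemma. (A classical Dirichlet box--principle argument would also work; I use slopes because the statement is phrased in terms of $\widehat{\mu}_{\max}(E_2)$. Enlarging $C$ if necessary, I may also assume $C\ge 1$.)

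First I would observe that, $E_2$ being a vector bundle, it is torsion free; hence $\Im(f)$ is torsion free, hence $E_1/\Ker(f)\simeq\Im(f)$ is torsion free, so $\Ker(f)$, endowed at every $\sigma\in M_K^\infty$ with the metric restricted from $E_1$, is a \emph{saturated} hermitian subbundle of $E_1$ of rank $n$. I would then use the exact sequence $0\to\Ker(f)\to E_1\xrightarrow{f}\Im(f)\to 0$, giving $\Im(f)$ the quotient metric from $E_1$, so that additivity of the arithmetic degree reads $\widehat{\deg}(E_1)=\widehat{\deg}(\Ker(f))+\widehat{\deg}(\Im(f))$. The task then splits into an upper bound for $\widehat{\deg}(\Im(f))$ and a lower bound for $\widehat{\deg}(E_1)$.

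For the upper bound: if $y=f(x)$ then $\|y\|_{E_2,\sigma}\le\|f\|_\sigma\,\|x\|_{E_1,\sigma}$, so, taking the infimum over preimages, the metric that $\Im(f)$ receives as a subsheaf of $E_2$ is at most $\|f\|_\sigma\le C$ times its quotient metric (hypothesis b)); since a smaller metric gives a larger arithmetic degree, $\widehat{\deg}(\Im(f))$ for the quotient metric is at most its degree for the metric from $E_2$, plus a term $(m-n)\log C$. That degree is in turn at most the degree of the saturation of $\Im(f)$ inside $E_2$, which is a genuine subbundle of rank $m-n$ and hence has degree $\le(m-n)\,\widehat{\mu}_{\max}(E_2)$. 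For the lower bound I would use hypothesis a): among the given generators of $E_1$ pick a basis $v_1,\dots,v_m$ of $E_1$ over $K$ with $\|v_i\|_\sigma\le C$ for all $i,\sigma$; then $v_1\wedge\cdots\wedge v_m$ is a nonzero section of $\det E_1$ of norm $\le C^m$ at each infinite place and generating a finite--index subgroup, whence $\widehat{\deg}(E_1)=\widehat{\deg}(\det E_1)\ge-m\log C$. Combining the three inequalities and dividing by $n$,
\[
\widehat{\mu}(\Ker(f))\ \ge\ -\Bigl(\tfrac mn-1\Bigr)\widehat{\mu}_{\max}(E_2)\ -\ \tfrac{2m-n}{n}\,\log C .
\]
I would finish by applying Minkowski's theorem to the hermitian $O_K$--lattice $\Ker(f)$, whose covolume is $|\Delta_K|^{n/2}e^{-\widehat{\deg}(\Ker(f))}$ up to the normalisation of absolute values: this produces a nonzero $v\in\Ker(f)$ with $\sup_{\sigma\in M_K^\infty}\log\|v\|_\sigma\le-\widehat{\mu}(\Ker(f))+3\log n+A_0$, where $A_0$ depends only on $K$ through its discriminant. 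Substituting the displayed bound and using $\tfrac{2m-n}{n}\log C\le\tfrac mn\log(C^2)$ gives the assertion, with $A=A_0$.

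The hard part will be purely bookkeeping with the metrics, not a conceptual one: comparing the quotient metric on $\Im(f)$ inherited from $E_1$ with the metric it carries as a subsheaf of $E_2$ so that the slope inequality points the right way; checking that $\Ker(f)$ is saturated, which is what makes $\widehat{\deg}$ additive along the exact sequence; and invoking the one--section (Minkowski) estimate in the precise normalisation that yields the $3\log n$ and the constant $A_0(K)$. Beyond these standard facts of Arakelov theory no new ingredient is needed.
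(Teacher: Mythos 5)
The paper does not prove this lemma at all: immediately after the statement it simply says ``A proof of this version of Siegel Lemma can be found in \cite{gasbarri}''. So there is no in-text argument to compare yours against, and the review can only assess your sketch on its own terms. Your route (the slope method together with the one--section Minkowski bound) is the standard one and is, up to bookkeeping, correct: $\Ker(f)$ is saturated in $E_1$ because $E_1/\Ker(f)\simeq\Im(f)\hookrightarrow E_2$ is torsion free, additivity of $\widehat{\deg}$ along $0\to\Ker(f)\to E_1\to\Im(f)\to 0$ holds with the induced sub- and quotient metrics, hypothesis (b) gives $\|\cdot\|^{E_2}_\sigma\le C\|\cdot\|^{\mathrm{quot}}_\sigma$ on $\Im(f)$ and hence $\widehat{\deg}^{\mathrm{quot}}(\Im f)\le\widehat{\deg}^{E_2}(\Im f)+(m-n)\sum_\sigma\log C\le (m-n)\widehat{\mu}_{\max}(E_2)+(m-n)\sum_\sigma\log C$ after passing to the saturation, and hypothesis (a) plus Hadamard gives $\widehat{\deg}(E_1)\ge -m\sum_\sigma\log C$; dividing by $n$ and feeding the slope bound into Minkowski yields the shape of the conclusion, with $\tfrac{2m-n}{n}\le\tfrac{2m}{n}$ giving the $\tfrac mn\log(C^2)$ term. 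The two places you should actually compute rather than gesture at are (i) the $[K:\Q]$ normalisation: the paper's $\widehat{\deg}$ sums over \emph{all} archimedean embeddings, so each of your intermediate $\log C$ terms carries an implicit $[K:\Q]$ which must cancel against the $[K:\Q]$ in the exponent of Minkowski's volume count, and one must check that the coefficient $(\tfrac mn-1)$ of $\widehat{\mu}_{\max}(E_2)$ survives this cancellation in the paper's unnormalised convention; and (ii) the Hermite-type constant: the box $\{\|v\|_\sigma\le t\ \forall\sigma\}$ has volume involving $\prod_\sigma 1/\Gamma(\cdot)$ factors, and you should verify that this produces at most $3\log n$ (it in fact produces roughly $\tfrac12\log n$, so $3\log n$ is safe, but this is exactly the kind of normalisation the statement is sensitive to). Neither point is a conceptual gap; both are the ``bookkeeping with the metrics'' you already flag as the hard part.
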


A proof of this version of Siegel Lemma can be found in \cite{gasbarri}.

-- Let $L$ be an hermitian ample line bundle on a projective variety $Z$ equipped with a smooth metric $\omega$. We suppose that the metric on $L$ is smooth. Over $H^0(Z, L^d)$ we can define two natural norms:
 \begin{equation}
 \Vert s\Vert_{\sup}:=\sup_{z\in Z}\Vert s\Vert (z)\}\;\; {\rm and}\;\;\Vert s\Vert_{L^2}:=\sqrt{\int_Z\Vert s\Vert^2 \omega^n}.
 \end{equation}
These norms are comparable: we can find constants $C_i$ such that

\begin{equation}\label{gromov}
C_1\Vert s\Vert_{L^2}\leq \Vert s\Vert_{\sup}\leq C_2^d\Vert s\Vert_{L^2}.
\end{equation}

This statement (due to Gromov) is proved for instance in \cite{SABK} Lemma 2 p. 166 when $Z$ is smooth. The general statement can be deduced by taking a resolution of singularities (remark that the proof of \cite{SABK} Lemma 2 p. 166  do not require that $L$ is ample). 

-- If $(\cX,\cL)$ is an arithmetic polarization of $X_K$, then we we can find constants  $C_1$ and $C_2$  such that
\begin{equation}\label{eq:counting1}C_1^{d^{N+1}} T^{d^N}\leq \Card\left(\{ s\in H^0(\cX, \cL^d)\;\; /\sup_{\tau\in M_K^{\infty}}\{\Vert s\Vert_\tau\}\leq T\}\right)\leq C_2^{d^{N+1}} T^{d^N}.\end{equation}
This is a consequence of \cite{Zhang}, Theorem 1.4 , \cite{GILLETSOULEISTRAEL} Theorem 2 and the comparaison above. 

-- If $\cL$ is an arithmetically ample line bundle, then for $d$ sufficiently big, the lattice $H^0(\cX,\cL)$ is generated by sections of $\sup$ norm less or equal than one. Cf. \cite{Zhang} for a proof. 

-- Let $L/K$ be a finite extension and $O_L$ the ring of integers of $L$. An $L$--point of $X_K$ is a $K$--morphism $P_L:\Spec(L)\to X_K$. The set of $L$ points of $X_K$ is noted  $X_K(L)$. If $(\cX,\cL)$ is an arithmetic polarization of $X_K$, by the valuative criterion of properness, every $L$--point $P_L:\Spec(L)\to X_K$ extends uniquely to a $O_K$--morphism $P_{O_L}:\Spec(O_L)\to \cX$. In this case, $P_{O_L}^\ast(\cL)$ is an hermitian line bundle on $\Spec(O_L)$. We define the height of $P_L$ with respect to $\cL$ to be the real number $h_\cL(P_L):={{\widehat{\deg}(P_L^\ast(\cL))}\over{[L:\Q]}}$.

-- {\it Liouville inequality}:  Let $p\in X_K(K)$ be a rational point. Let $p_0\in X_{\sigma_0}$ be its image. Then, for every positive integer $d$ and  global section $s\in H^0(\cX, \cL^d)$ such that $s(p)\neq 0$ we have
\begin{equation}\label{liouville1} 
\log\Vert s\Vert_{\sigma_0}(p)\geq-[K:\Q]h_\cL(p)\cdot(\log\Vert s\Vert+d).
\end{equation}
For a proof of this form of Liouville inequality cf. \cite{gasbarri2} Theorem 3.1.

\subsection{Tools and basic facts from Nevanlinna theory}.   We will denote by $\Delta_r$ the disk $\{ |z|<r\}$. Let $M$ be a non compact Riemann surface and $U\subset M$ be a relatively compact open set whose border is a smooth Jordan curve which we denote by $\partial U$.  We denote by $\overline U$ the closure of $U$ in $M$.

Denote by $d$ the standard differential operator on functions on $M$ and write it as $d=\partial+\overline{\partial}$.  We denote by $d^c$ the operator $d^c:={{1}\over{4\pi i}}\cdot (\partial-\overline{\partial})$. The operator $dd^c: C^\infty(M)\to A^{(1,1)}(M)$ is called the Laplace operator. 
A function $f$ such that $dd^c(f)=0$ is said to be harmonic. 

The Laplace operator can be extended to an operator from the vector space generated by smooth and sub harmonic functions. 

-- The Green function on $U$ is a function $g_U(z,w): U\times U\setminus D\to [0,+\infty[$ (here $D$ is the diagonal) such that:

(i) For every $p\in U$ the function $g_U(z;\cdot)$ is $\cC^\infty$ in $U\setminus\{p\}$ and harmonic there. Moreover $g_u(p,\cdot)|_{\partial U}=0$.

(ii) if $\iota:\Delta_1\to U$ is a holomorphic embedding such that $\iota(0)=p$, then $\iota^\ast(g(z,\cdot))+\ln|z|$ extends to an harmonic function on $\Delta_1$.

-- Fix a point $z_0\in U$.  If we extend by zero outside $\overline U$ the Green function $g_U(z_0,z)$, the following equation holds: $dd^cg_U(z_0, z)=\delta_{z_0}-d\mu_{U,z_0}$; where $d\mu_{U,z_0}$ is a measure of total mass one supported on $\partial U$ ($\delta_{z_0}$ being the Dirac measure with support on $z_0$). 

Suppose that $L$ is a hermitian line bundle on $M$. Suppose that $s$ is a meromorphic section of $L$. 

-- If $s$ is a meromorphic section of $L$; we denote by $div(s)$ the formal sum $div(s):=\sum_{z\in M}v_z(s)[z]$; where $v_z(s)$ is the multiplicity of $s$ in $z$. This sum may be infinite but  its restriction to every relatively compact open set is finite. 

-- For every $z_0\in M$ denote by $\Omega_{z_0}M$ the cotangent space of $M$ in $z_0$. Let $s\in H^0(M,L)$ be a non vanishing section. If $v_{z_0}(s)=n$ then $s$ defines an element $j^n(s)_{z_0}\in L_{z_0}\otimes \Omega_{z}^{\otimes n}$ called the {\it $n$--th jet of $s$ at $z_0$}. 

-- The current $\delta_{\div(s)}-dd^c\ln\Vert s\Vert$ ($\delta_D$ being the dirac measure with support  on the divisor $D$) extends to a $\cC^\infty$ $(1,1)$--form $c_1(L)$ called {\it the first Chern Class of $L$}. 

-- The function
\begin{equation}
T(z,L,r):=\int_0^r{{dt}\over{t}}\cdot\int_{g_U(z;\cdot)<\ln(t)}c_1(L)
\end{equation} 
is called {\it the Nevanlinna Characteristic function} of $L$ with respect to $U$ and base point $z$. It is linear as  a function of $L$.

-- The Nevanlinna First Main Theorem holds: if $s$ is a meromorphic section of $L$ such that  $v_z(s)=0$ then
\begin{equation}
T(z,L,r)+\int_{\partial U}\ln\Vert s\Vert d\mu_{U,z}=\sum_{w\in U}v_w(s)\cdot g_U(z,w)+\ln\Vert s\Vert(z).
\end{equation}
More generally, if we fix an hermitian metric on $T_{z}M$, $s\in H^0(M, L)\setminus\{0\}$ and $v_z(s)=n$, then
\begin{equation}
T(z,L,r)+\int_{\partial U}\ln\Vert s\Vert d\mu_{U,z}=\sum_{w\in U}v_w(s)\cdot g_U(z,w)+\ln\Vert j^n(s)_z\Vert+n\cdot C.
\end{equation}
Where $C$ is a constant depending only on the metric on $T_zM$.

\subsection{Tools from complex analysis and measure theory} Let $X$ be  a smooth projective variety  of dimension $N$ defined over $\C$. A positive $(1,1)$ form $\omega$ on $X$ induces a volume form $\omega^N$ and consequently a measure $\mu_\omega(\cdot)$ on $X$.  Let  $A\subset X$ be a subset. We will say that {\it $A$ is full in $X$} if the measure of $\mu_\omega(X\setminus A)=0$.  If $\omega_1$ is another positive $(1,1)$ form on $X$, then by compactness of $X$ it is easy to see that $\mu_\omega(X\setminus A)=0$ if and only if $\mu_{\omega_1}(X\setminus A)=0$; thus the "fullness" of $A$ is independent on the chosen metric. 

We recall the classical Theorem of Borel--Cantelli, which can be found in any standard book in measure theory:

\begin{proposition} \label{borelcantelli} Let $X$ be a variety equipped with the Lebesgue measure $\mu$. Let $\{A_n\}_{n\in \bN}$ be a sequence of measurable sets of $X$ such that 
$$\sum_{n=1}^\infty\mu(A_n)<\infty$$
then
$$\mu(\bigcap_{n=1}^\infty\bigcup_{k\geq n}A_k)=0.$$
That means that almost all $x\in X$ belong only to finitely many $A_n$. 
\end{proposition}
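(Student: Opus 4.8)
The statement to be proved is the classical first Borel--Cantelli lemma, and the plan is purely measure-theoretic: the hypothesis that $X$ is a variety plays no role beyond providing an underlying measure space $(X,\mu)$, so I would use only countable subadditivity and monotonicity (or continuity from above) of $\mu$.

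First I would introduce the tail sets $B_n := \bigcup_{k\geq n} A_k$. These are measurable, they form a decreasing sequence $B_1 \supseteq B_2 \supseteq \cdots$, and $\bigcap_{n=1}^\infty B_n = \bigcap_{n=1}^\infty \bigcup_{k\geq n} A_k$ is exactly the set whose measure we must bound. Countable subadditivity gives $\mu(B_n) \leq \sum_{k\geq n} \mu(A_k)$ for every $n$. Since $\sum_{k=1}^\infty \mu(A_k) < \infty$ by hypothesis, the tails $\sum_{k\geq n}\mu(A_k)$ of a convergent series of nonnegative terms tend to $0$ as $n\to\infty$; hence $\mu(B_n)\to 0$.

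Next I would conclude. Since $\bigcap_{m=1}^\infty B_m \subseteq B_n$ for each fixed $n$, monotonicity of $\mu$ yields $\mu\big(\bigcap_m B_m\big) \leq \mu(B_n)$ for all $n$, and letting $n\to\infty$ forces $\mu\big(\bigcap_m B_m\big) = 0$. (Equivalently one may invoke continuity of $\mu$ from above, which applies because $\mu(B_1)\leq \sum_k \mu(A_k) < \infty$.) Finally I would unwind the definition for the last assertion: $x \in \bigcap_n \bigcup_{k\geq n} A_k$ means that for every $n$ there is some $k\geq n$ with $x\in A_k$, i.e. $x$ lies in infinitely many of the $A_n$; hence the set of points lying in only finitely many $A_n$ is the complement of this null set, and is therefore full in $X$.

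There is no genuine obstacle here: the only point requiring a word of care is the finiteness $\mu(B_1)<\infty$, needed if one argues via continuity from above, but this is immediate from the summability hypothesis, and the plain monotonicity argument sidesteps even that.
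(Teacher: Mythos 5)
Your proof is correct and is exactly the standard textbook argument (tail sets $B_n=\bigcup_{k\geq n}A_k$, countable subadditivity, monotonicity). The paper gives no proof at all here — it explicitly defers to "any standard book in measure theory" — so your write-up simply supplies the reference the paper is alluding to, and there is nothing to compare beyond that.
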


\

\section{Degree of a divisor on a bounded domain.}\label{degree of a divisor}

\

In this section we will work over $\C$. 

Let $(X, L)$ be a projective variety with an ample line bundle equipped with a positive metric. 

Let $M$ be a Riemann surface and $U\subset M$ be a relatively compact open domain. Let $\varphi:M\to X$. 

Fix a positive integer $d$.  If $s\in H^0(X,L^d)\setminus\{0\}$, we may write $\div(\varphi^\ast(s))=\sum_{z\in M}n_z(s)\cdot z$ where $n_z(s)$ is the multiplicity of $\varphi^\ast(s)$ at $z$ and it is a positive integral number which is zero for every $z$ up to a (at most) countable set. 

More generally, we write $\div_U(\varphi^\ast(s))=\sum_{z\in U}n_z(s)\cdot z$. This is a finite sum because $U$ is relatively compact. 

We will denote by $\deg_U(s)$ the positive integer $\deg(\div_U(\varphi^\ast(s))=\sum_{z\in U}n_z(s)$. Observe that, again,  this degree is finite because $U$ is relatively compact. It is the number of zeros of $\varphi^\ast(s)$  restricted to $U$ counted with multiplicities. 

Let $W\subset U$ be a compact set. 

Let $s\in H^0(X,L^d)$ be a global section and suppose that $\varphi^\ast(s)|_T$ does not vanish identically. 

Denote by $\Vert s\Vert_W$ the real number $\sup_{z\in W}\{\Vert \varphi^\ast(s)\Vert(z)\}$. 

The norm of a global section and the number $\deg_U(s)$ are related by the following theorem:

\begin{theorem}\label{norms and zeros 0} Under the hypotheses above we can find constants $A$ and $B$, depending on $L$, $U$, $\varphi$ and $W$ such that
\begin{equation}
A\cdot d + B\cdot(\log\Vert s\Vert_{\sup} - \log\Vert s\Vert_W)\geq \deg_U(s).
\end{equation}
\end{theorem}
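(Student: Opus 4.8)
The plan is to use the Nevanlinna First Main Theorem, applied to the pulled-back section $\varphi^\ast(s)$ on $U$, together with a base-point argument that converts the $L^\infty$ and $L^2$ bounds on $s$ over $X$ into bounds on $\varphi^\ast(s)$ over $U$ and $W$. First I would fix a base point $z_0\in W$ at which $\varphi^\ast(s)$ does not vanish; such a point exists for all but finitely many choices, but to get a uniform constant I would instead average, or better: choose $z_0$ where $\log\Vert\varphi^\ast(s)\Vert(z_0)$ is as large as possible on $W$, so that $\log\Vert\varphi^\ast(s)\Vert(z_0)=\log\Vert s\Vert_W$ (here using that $\Vert s\Vert_W$ is the sup over $W$). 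Then the First Main Theorem on $U$ with base point $z_0$ and the hermitian line bundle $\varphi^\ast(L^d)$ gives
\begin{equation}
T(z_0,\varphi^\ast(L^d),r)+\int_{\partial U}\log\Vert\varphi^\ast(s)\Vert\, d\mu_{U,z_0}=\sum_{w\in U}n_w(s)\, g_U(z_0,w)+\log\Vert\varphi^\ast(s)\Vert(z_0).
\end{equation}

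Next I would extract $\deg_U(s)$ from the sum on the right. Each term $n_w(s)\,g_U(z_0,w)$ is nonnegative, and since $g_U(z_0,\cdot)$ is positive and continuous on any compact subset of $U$, on the compact set $\overline{W}\subset U$ (or a slightly larger compact set containing the zeros we wish to count) it is bounded below by a positive constant $c$ depending only on $U,W$. So $\sum_{w\in U}n_w(s)g_U(z_0,w)\ge c\cdot\deg_W(s)$; to count zeros in all of $U$ rather than just $W$, one has to be a little more careful, but the standard trick is that the degree over $U$ and the degree over a slightly shrunk $U'$ with $W\subset U'\Subset U$ differ in a way controlled by the characteristic function, or one simply reinterprets the statement with a nested pair; in any case $\sum_w n_w(s)g_U(z_0,w)$ dominates a positive multiple of $\deg_U(s)$ up to replacing $U$ by a cofinal compact exhaustion — I would state the theorem for the degree counted in the region where $g_U(z_0,\cdot)\ge c$, which after adjusting constants covers $U$. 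Rearranging,
\begin{equation}
\deg_U(s)\le \frac{1}{c}\Bigl(T(z_0,\varphi^\ast(L^d),r)+\int_{\partial U}\log\Vert\varphi^\ast(s)\Vert\, d\mu_{U,z_0}-\log\Vert s\Vert_W\Bigr).
\end{equation}

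Now I would bound the two remaining positive contributions. The characteristic function is linear in the line bundle, so $T(z_0,\varphi^\ast(L^d),r)=d\cdot T(z_0,\varphi^\ast(L),r)$, and $T(z_0,\varphi^\ast(L),r)$ is a constant depending only on $L,U,\varphi$ (and the fixed $r$, which we take to be the "radius" attached to $U$); this produces the $A\cdot d$ term. For the boundary integral, $\log\Vert\varphi^\ast(s)\Vert\le\log\Vert\varphi^\ast(s)\Vert_{\sup,\overline U}\le\log\Vert s\Vert_{\sup}$ pointwise on $\partial U$ (the sup norm of $s$ over $X(\C)$ dominates its pullback anywhere), and $\mu_{U,z_0}$ has total mass one, so $\int_{\partial U}\log\Vert\varphi^\ast(s)\Vert\,d\mu_{U,z_0}\le\log\Vert s\Vert_{\sup}$. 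Combining, $\deg_U(s)\le A\cdot d+\frac{1}{c}(\log\Vert s\Vert_{\sup}-\log\Vert s\Vert_W)$, which is the claim with $B=1/c$.

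The main obstacle is the matching of regions: the Green-function lower bound $g_U(z_0,w)\ge c>0$ only holds on compact subsets of $U$, whereas $\deg_U(s)$ counts zeros all the way out to $\partial U$ where $g_U$ vanishes, so a naive estimate loses control near the boundary. The clean fix is to prove the inequality first with $\deg_{U'}(s)$ for some $U'\Subset U$ containing $W$, then observe that either one only ever needs such a $U'$ (which is the case in the applications, where $U$ is itself shrunk), or to run the First Main Theorem with a one-parameter family of domains and integrate — the $\int_0^r dt/t$ in the definition of $T$ is exactly designed to absorb this. A second, more minor subtlety is the dependence of constants on the choice of $z_0$: since we choose $z_0$ to maximize $\log\Vert\varphi^\ast(s)\Vert$ on $W$, $z_0$ varies with $s$, so one must ensure $c$ and $T(z_0,\varphi^\ast(L),r)$ are bounded uniformly in $z_0\in W$; this follows from continuity and compactness of $\overline{W}$, but it should be spelled out.
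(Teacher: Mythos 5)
You have the right strategy, and you correctly identify the central obstacle, but you do not close it. The issue you raise — that $g_U(z_0,\cdot)$ vanishes on $\partial U$, so $\sum_{w\in U} n_w(s)\,g_U(z_0,w)$ does \emph{not} dominate $c\cdot\deg_U(s)$ for any $c>0$ — is real, and none of the three fixes you sketch resolves it for the theorem as stated. Your phrase ``state the theorem for the degree counted in the region where $g_U(z_0,\cdot)\ge c$, which after adjusting constants covers $U$'' is wrong: for any $c>0$ that region is a proper compact subset of $U$, and no adjustment of constants makes it cover $U$. Your suggestion ``prove the inequality first with $\deg_{U'}(s)$ for some $U'\Subset U$'' would prove a weaker statement than the theorem unless you rename; and ``run the First Main Theorem with a one-parameter family of domains and integrate'' is vague and not needed. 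In short, you flag the hole but then conclude ``Combining, $\deg_U(s)\le\cdots$'' without having plugged it.

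The paper's fix is the mirror image of your shrinking idea and is worth internalizing: instead of applying the First Main Theorem on $U$ (whose Green function degenerates at $\partial U$), fix once and for all a relatively compact open $V$ with $\overline U\subset V\subset M$ and apply the First Main Theorem on $V$ with base point $w_1\in W$. Because $\overline U$ is a compact subset of $V$, the Green function $g_V$ of $V$ satisfies $g_V(z,w)\ge a>0$ uniformly for $z,w\in\overline U$ (this uses that $g_V\ge 0$ and is superharmonic, nonvanishing in the interior). Then $\sum_{w\in V} n_w(s)\,g_V(w_1,w)\ge\sum_{w\in U} n_w(s)\,g_V(w_1,w)\ge a\cdot\deg_U(s)$, with $a$ independent of $s$ and $w_1$. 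This is exactly the pair $(U,V)$ you were groping for under the names $(U',U)$, except the paper enlarges rather than shrinks, which is what allows the conclusion to be about $\deg_U(s)$ itself. Your remaining steps — choosing $w_1\in W$ to realize $\Vert s\Vert_W$, bounding the boundary integral by $\log\Vert s\Vert_{\sup}$ using that $d\mu_{V,w_1}$ has mass one, and noting that $T(w_1,\varphi^\ast(L),r)$ must be bounded uniformly in $w_1\in\overline U$ (the paper isolates this as a separate lemma) — are all correct and match the paper's argument.
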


\begin{proof} We fix a relatively compact open neighborhood $V$ of $\overline{U}$ (closure of $U$) with smooth border.  

%Increasing $U$ if necessary, we will suppose that the border $ \partial U$ is smooth too. 

We denote by $g_V(z,w)$ the Green function of $V$. 

Since $g_V(z,w)$ is bigger or equal then zero on the closure of $V$ and it is subharmonic there, by the standard mean inequality for subharmonic functions, one sees that $g_V(z,w)>0$ on $V$ (and $g_V(z,w)=0$ on the border $\partial V$). Consequently there is a constant $a>0$ (depending only on $U$ and $V$) such that
$g_V(z,w)>a$ for every $z$ and $w$ in $U$. 

 \begin{lemma}\label{uniform T} We can find a constant $B_1$, which depends only on $V$, $\varphi$ and $L$, such that, for every $w_0\in V$ we have
 \begin{equation}
  \int_0^1{{dt}\over{t}}\int_{g_V(w_0,\cdot)\leq\log(t)}\varphi^\ast(c_1(L))\leq B_1
  \end{equation}
  for every $w_0\in V$. \end{lemma}

\begin{proof} Since every line bundle is trivial on $V$, we can choose a trivialization $\varphi^\ast(L)=\cO_V\cdot e$. Moreover the norm $\Vert e\Vert(z)$ is bounded on $\overline V$. Fix a constants $A_i$ such that 
 $A_1\leq \Vert e\Vert(z)\leq A_2$ for every $z\in \overline V$. 
 
 Let $w_0\in V$. The first main theorem with base point $w_0$ applied to the section $e$ gives
 \begin{equation}
 \int_0^1{{dt}\over{t}}\int_{g_V(w_0,\cdot)\leq\log(t)}\varphi^\ast(c_1(L))+\int_{\partial V}\log\Vert e\Vert d\mu_{V,w_0}=\log\Vert e\Vert(w_0)
 \end{equation}
 The conclusion of the Lemma follows\end{proof}

  Let $s\in H^0(X,L^d)\setminus\{0\}$. Let $w_1\in W$ be a point such that $\Vert \varphi^\ast(s)\Vert (w_1)=\Vert s\Vert_W$. 
  
  %Observe that, by definition, we have that $\log\Vert\varphi^\ast(s)\Vert(w_1)\geq-\log\|\varphi\|(L,U,d)$.
  
We now apply the first main theorem to $s$, using as base point $w_1$ and we find:
\begin{equation}
 d\cdot\int_0^1{{dt}\over{t}}\int_{g_V(w_1,\cdot)\leq\log(t)}\varphi^\ast(c_1(L))+\int_{\partial V}\log\Vert \varphi^\ast(s)\Vert d\mu_{V,w_1}\geq \sum_{z\in U}n_z(s)(g_V(w_1,z))+\log\Vert s\Vert_W.
 \end{equation}
 Thus
 \begin{equation}
 A_1\cdot d+\log\Vert s\Vert_{\sup}\geq  \sum_{z\in U}n_z(s)\cdot a+\log\Vert s\Vert_W.
 \end{equation}
 
 %\begin{equation}
%\log\|\varphi\|(L,U,d)+B_1\cdot d\geq \sum_{z\in U}n_z(s)\cdot a.
%\end{equation}
The conclusion  of the Theorem follows.\end{proof}

 we can deduce from Theorem \ref{norms and zeros 0}  a geometric reformulation of the key Lemma 1 of \cite{vanderpoorten}.

Over the vector space $H^0(X,L^d)$ we have two natural norms: Let $s\in H^0(X,L^d)\setminus\{0\}$:

-- the sup norm on $X$: $\| s\Vert_{\sup}:=\sup\{\Vert s\Vert (x) \;/\; x\in X\}$.

-- The sup on $W$ norm:  $\| s\Vert_T:=\sup\{\Vert \varphi^\ast(s)\Vert (z) \;/\; z\in W\}$.

In general we will have $\| s\Vert_W\leq \Vert s\Vert_{\sup}$ and these two norms may be quite different. 

We will denote by $\|\varphi\|(L,W,d)$ the real number 
\begin{equation}
\|\varphi\|(L,W,d):=\sup\{{{\| s\|_{\sup}}\over{\|s\|_W}}\;/\; s\in H^0(X,L^d)\setminus\{0\}\}.
\end{equation}

Thus we find
\begin{proposition}\label{norms and zeros} Under the hypotheses above, we can find positive numbers  $A$ {\rm depending only on $U$ and $M$} and  $B$ {\rm depending only on $U$, $L$ and $\varphi$} such that, for every section $s\in H^0(X,L^d)\setminus\{0\}$, we have

\begin{equation}
 A\cdot\log\|\varphi\|(L,U,d)+B\cdot d\geq \deg_U(s)
\end{equation}
\end{proposition}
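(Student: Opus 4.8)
The plan is to deduce Proposition~\ref{norms and zeros} directly from Theorem~\ref{norms and zeros 0} by packaging the two norms into the single quantity $\|\varphi\|(L,U,d)$. Fix a section $s\in H^0(X,L^d)\setminus\{0\}$ with $\varphi^\ast(s)$ not identically zero on $W$ (if $\varphi^\ast(s)$ vanishes identically, there is nothing to prove since $\deg_U(s)$ is then infinite but so is the ratio defining $\|\varphi\|$, or one simply excludes such $s$ as in the hypotheses). Theorem~\ref{norms and zeros 0}, applied with $W=U$ taken to be (or to contain) a compact exhausting piece — more precisely, applying the theorem with the compact set $W$ being an arbitrary compact subset of $U$ on which $\varphi^\ast(s)$ does not vanish identically — gives constants $A_0$ and $B_0$, depending only on $L$, $U$, $\varphi$ and $W$, with
\begin{equation}
A_0\cdot d + B_0\cdot\left(\log\|s\|_{\sup} - \log\|s\|_W\right)\geq \deg_U(s).
\end{equation}
The first step is therefore just to invoke this inequality.

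The second step is to bound the bracketed difference of logarithms by $\log\|\varphi\|(L,U,d)$. By the very definition of $\|\varphi\|(L,U,d)$ as a supremum of ratios $\|s\|_{\sup}/\|s\|_W$ over all nonzero sections, we have $\|s\|_{\sup}/\|s\|_W\leq \|\varphi\|(L,U,d)$ for every such $s$, hence $\log\|s\|_{\sup}-\log\|s\|_W\leq \log\|\varphi\|(L,U,d)$. Substituting into the displayed inequality yields
\begin{equation}
A_0\cdot d + B_0\cdot\log\|\varphi\|(L,U,d)\geq \deg_U(s),
\end{equation}
which is exactly the claimed bound after renaming $A:=B_0$ and $B:=A_0$ (note the statement's $A$ multiplies the $\log\|\varphi\|$ term and its $B$ multiplies $d$, so the roles are swapped relative to Theorem~\ref{norms and zeros 0}). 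One should also record that $\|\varphi\|(L,U,d)\ge 1$, so $\log\|\varphi\|(L,U,d)\ge 0$ and the inequality is non-vacuous; this follows since $\|s\|_W\le\|s\|_{\sup}$ always.

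The only genuine subtlety — and the step I expect to need the most care — is the bookkeeping of \emph{which} constant depends on \emph{what}, since the proposition asserts that $A$ depends only on $U$ and $M$ while $B$ depends on $U$, $L$ and $\varphi$. Tracing through the proof of Theorem~\ref{norms and zeros 0}: the constant multiplying $\deg_U(s)$ there is $a^{-1}$ where $a>0$ is a lower bound for the Green function $g_V$ on $U\times U$, which depends only on $U$ and the auxiliary neighborhood $V$ (hence only on $U$ and $M$); this becomes the coefficient $A$ of $\log\|\varphi\|$ here. The constant $A_1$ (the bound $B_1$ from Lemma~\ref{uniform T}) multiplying $d$ depends on $V$, $\varphi$ and $L$; this becomes the coefficient $B$ of $d$ here. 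So the dependencies match up once one is careful that the supremum defining $\|\varphi\|(L,U,d)$ absorbs the section-dependence cleanly and that the choice of $V$ (hence of $a$) can be made depending on $U$ and $M$ alone. Writing this attribution out carefully is the main thing to get right; the rest is a one-line substitution.
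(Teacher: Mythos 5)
Your proposal is correct and is essentially the paper's own (one-line) argument, just unwound: the paper normalizes $\Vert s\Vert_{\sup}=1$ and then reads off $\log\Vert s\Vert_W\geq -\log\|\varphi\|(L,U,d)$, while you instead directly bound $\log\Vert s\Vert_{\sup}-\log\Vert s\Vert_W\leq \log\|\varphi\|(L,U,d)$ by the definition of $\|\varphi\|$ and substitute into Theorem~\ref{norms and zeros 0}; these are the same computation. Your bookkeeping of the constants (the coefficient of $\log\|\varphi\|$ coming from the lower bound $a$ of the Green function, hence depending only on $U$ and $M$, and the coefficient of $d$ picking up the Nevanlinna bound of Lemma~\ref{uniform T}, hence $U$, $L$, $\varphi$) also matches what the paper intends.
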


In order to prove the Proposition, it suffices to suppose that $\|s\Vert_{\sup}=1$ and consequently  $\log\Vert\varphi^\ast(s)\Vert(w_1)\geq-\log\|\varphi\|(L,U,d)$.

%Observe that Theorem \ref{norms and zeros} is a geometric reformulation of the key Lemma 1 of \cite{vanderpoorten}.

\

\section{Bombieri--Pila revisited}

\

In the seminal paper \cite{BOMBIERIPILA} the authors proved that, on the graph of a real analytic function, there are at most $O(\exp(\epsilon T))$ points of (logarithmic) height at most $T$. In this section we will prove a similar result but more in the spirit of the geometric transcendental theory. One can remark that the proof given is more in the spirit of classical transcendental theory. 

Let $M$ be a Riemann surface and Let $U\subset M$ be a relatively compact open set.  Let $(X,L)$ be a polarized projective variety defined over a number field $K$. We fix an embedding $K\hookrightarrow \C$. 

We fix a holomorphic map $\varphi: M\to X(\C)$ with Zariski dense image.

For every positive number $T$ let 
\begin{equation}
S_U(T):\left\{ z\in U \;/ \; \varphi(z)\in X(K) \; {\rm and}\; h_L(\varphi(z))\leq T\right\}
\end{equation}
and Let $A_U(T)$ be the cardinality of it. 

In this section we prove the following (slight) generalization of the Bombieri--Pila Theorem in this contest:

\begin{theorem}\label{bombieri pila} For every positive number $\epsilon$, we have
\begin{equation}
A_U(T)\ll \exp(\epsilon T)
\end{equation}
where the involved constants depend on $U$, $\varphi$, $\epsilon$ and $L$ but not on $T$. \end{theorem}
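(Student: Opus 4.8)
The plan is to follow the classical transcendence scheme advertised in the introduction: cover $U$ by $O(\exp(\epsilon T))$ small discs of controlled area, and show that on each such disc the number of points of $S_U(T)$ is bounded by a constant independent of $T$. Fix $\epsilon>0$. First I would reduce to the case where $U$ is a coordinate disc (or finite union of such) by a partition-of-unity / compactness argument, so that we may work with an explicit holomorphic chart and with $\varphi$ given by holomorphic functions on a slightly larger disc $V\supset\overline U$. The key quantitative input will be Theorem \ref{norms and zeros 0} (equivalently Proposition \ref{norms and zeros}): for a compact $W\subset U$ and a nonzero $s\in H^0(X,L^d)$ whose pullback does not vanish identically on $W$,
\begin{equation}
\deg_U(s)\leq A\,d + B\,(\log\|s\|_{\sup}-\log\|s\|_W).
\end{equation}
The whole point is to produce, on each small disc, a section $s$ of \emph{bounded} degree $d$ (independent of $T$) vanishing at all the rational points of that disc, and with $\log\|s\|_{\sup}-\log\|s\|_W$ under control; then $\deg_U(s)$ bounds the number of those rational points.

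Second, I would choose the radius of the covering discs. Let the points of $S_U(T)$ in a small disc $D$ be $z_1,\dots,z_m$ with images $p_i=\varphi(z_i)\in X(K)$, $h_L(p_i)\le T$. Using the Siegel Lemma \ref{siegel} applied to the evaluation (jet) map
\[
H^0(\cX,\cL^d)\longrightarrow \bigoplus_{i} \big(\text{jets of order} <k \text{ at } z_i\big),
\]
with the target rank bounded via \eqref{eq:counting1} and $\widehat\mu_{\max}$ of the target controlled by the heights $h_L(p_i)\le T$, I obtain a nonzero $s\in H^0(\cX,\cL^d)$ with $\log\|s\|_{\sup}\ll (d/\dim)\cdot(\text{rank of target})\cdot T+\dots$ which vanishes to high order at each $z_i$. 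Then the Liouville inequality \eqref{liouville1} at one of the $z_i$ (a point where a well-chosen auxiliary section, or $s$ itself divided by its vanishing, does not vanish) gives a \emph{lower} bound $\log\|s\|_W\ge -cT(\log\|s\|+d)$. Feeding both bounds into Theorem \ref{norms and zeros 0}, and crucially using that on a disc of radius $\rho$ a holomorphic function's sup on $D$ versus sup on a smaller $W$ differ by $O(\text{ord of vanishing}\cdot\log(1/\rho))$ (Schwarz lemma / maximum principle), one gets an inequality of the shape
\[
(\text{number of }z_i \text{ with multiplicity})\cdot k \le A d + B\,d\,T - C\,k\,m\log(1/\rho)+ (\text{lower order}).
\]
Choosing $d$ a fixed constant (depending on $\epsilon$ but not $T$), then $\rho=\rho(T)$ so that $k\log(1/\rho)$ dominates $dT$ — i.e. $\rho\approx \exp(-c'T/k)$ — forces $m$ to be bounded by a constant. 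Since $U$ is covered by $O(\rho^{-2})=O(\exp(2c'T/k))$ such discs, adjusting constants so that $2c'/k<\epsilon$ yields $A_U(T)\ll\exp(\epsilon T)$.

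The delicate point — and where the argument must be run carefully — is the simultaneous balancing of three quantities: the auxiliary degree $d$, the vanishing order $k$, and the disc radius $\rho$. Siegel's lemma forces $\log\|s\|_{\sup}$ to grow like $d\,k^{?}\,T$ (the exact power of $k$ coming from how many jet conditions we impose and the dimension $n$ of $X$), while the Schwarz-lemma gain on a disc of radius $\rho$ is of order $k\log(1/\rho)$ \emph{per point}, and Liouville costs $T\cdot(\log\|s\|+d)$ at a single point. One must check that the Liouville loss is genuinely concentrated at one point whereas the Schwarz gain is collected at every point, so that for $m$ large the gain beats the loss; this is exactly the mechanism behind the claim in the introduction that one needs "an upper bound of the norm and a lower bound of its norm at a specific point." The remaining steps — absorbing the constant $C$ from the jet formula in the First Main Theorem, handling points of $S_U(T)$ near $\partial U$, and the harmless passage between the scheme-theoretic $H^0(\cX,\cL^d)$ and the complex $H^0(X,L^d)$ — are routine. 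I expect the main obstacle to be bookkeeping the dependence on $k$ and $\rho$ so that the final exponent is $<\epsilon$ for an appropriate fixed $d$.
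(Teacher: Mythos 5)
Your overall strategy matches the paper's: cover $U$ by $O(\exp(\epsilon T))$ small open sets, use Siegel's lemma to produce an auxiliary section vanishing at the rational points in each small set, and invoke the degree bound of Theorem~\ref{norms and zeros 0} to conclude that each small set carries only $O(1)$ such points. However, there are two places where your proposed implementation diverges from the paper's and where, as written, it would either over-complicate or run into trouble.

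First, you build in a vanishing-order parameter $k$ (jet conditions) and then must balance $d$, $k$, and $\rho$ simultaneously; the paper does \emph{not} impose higher-order vanishing. It applies Siegel's lemma to the plain evaluation map $H^0(\cX,\cL^d)\to\bigoplus_i\cL^d|_{\varphi(z_i)}$ at roughly $(1-\epsilon)h^0(X,L^d)\sim d^n$ of the points, getting a nonzero $s$ with $\log\|s\|_{\sup}\ll dT$ and only simple vanishing there. The gain that beats the Liouville loss comes not from one point's multiplicity $k$ but from the \emph{number} $A\sim d^n$ of vanishing conditions, each contributing a Green-function term $g_V(w,z)\gtrsim -\log r_W\gtrsim C_2 T/d^{n-1}$; summing gives a gain of order $C_2 dT$, which for $C_2$ large dominates the $O(dT)$ Liouville and norm losses. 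So the parameter $k$ is superfluous, and the balance is only between $d$ and $\rho$.

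Second, and more substantively, your description of the Liouville step is confused. You write that one applies Liouville at ``one of the $z_i$ (a point where a well-chosen auxiliary section, or $s$ itself divided by its vanishing, does not vanish)'' -- but if $s$ vanishes at all the $z_i$ one cannot apply Liouville there directly, and dividing out the vanishing introduces jets you then have to re-estimate, which is exactly the bookkeeping headache you flag at the end. The paper's device (Lemma~\ref{distance lemma}) is a clean two-step contradiction: Siegel only forces $s$ to vanish on a \emph{subset} $H_W(d)\subset S_W(T)$ of cardinality $A$; then, if some $w\in S_W(T)$ had $\varphi^*(s)(w)\neq 0$, the Nevanlinna First Main Theorem with base point $w$ together with the Green-function lower bound and Liouville \emph{at $w$} (where $s$ does not vanish, so Liouville applies as stated) gives a contradiction -- so in fact $\varphi^*(s)$ vanishes on all of $S_W(T)$. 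Relatedly, for the final count you should not try to control $\log\|s\|_{\sup}-\log\|s\|_W$ for the specific $s$ you built (that difference could be infinite if $s$ vanishes on $W$); the paper instead uses Proposition~\ref{norms and zeros}, a finite-dimensionality/compactness statement that, \emph{for fixed $d$}, $\deg_U(\varphi^*(s))$ is bounded uniformly over \emph{all} nonzero $s\in H^0(X,L^d)$, since $\|\varphi\|(L,U,d)$ is a finite constant (Zariski density of $\varphi$ makes $\|\cdot\|_U$ a genuine norm on $H^0(X,L^d)$). With these two corrections your outline becomes the paper's proof.
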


Before we give the proof of the theorem, we prove some lemma which will be useful in the sequel.

We fix a relatively compact open set $V\subset M$ which contains $\overline U$. Let $g_V(z,w)$ be the Green function of $V$ and we fix a smooth metric $\omega$ with induced distance $d_V(z,w)$ on $V$.

\begin{lemma}\label{metrics and green} With the notations above, the function $\log(d_V(z,w))+g_V(z,w)$ extends to a continuous  function on $V\times V$.
\end{lemma}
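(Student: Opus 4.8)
The plan is to treat the diagonal $D\subset V\times V$ separately from its complement, and, near $D$, to cancel the logarithmic singularity of $g_V$ against that of $\log d_V$ inside a holomorphic chart. Off the diagonal there is nothing to do: $g_V$ is $\cC^\infty$, and in particular jointly continuous, on $(V\times V)\setminus D$ (this is part of the elementary theory of the Green function: it is separately harmonic and locally bounded there), while $d_V$ is continuous and strictly positive; hence $\log d_V(z,w)+g_V(z,w)$ is already continuous on $(V\times V)\setminus D$. It therefore suffices to produce, for each $p\in V$, a continuous extension across the point $(p,p)$.

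Fix $p\in V$ and a biholomorphism $\psi$ from an open neighborhood $W$ of $p$ onto $\Delta_2$ with $\psi(p)=0$; write $u=\psi(z)$, $v=\psi(w)$ and choose shrinking neighborhoods $W=W_0\supset W_1\supset W_2\supset W_3$ of $p$, say $W_k=\psi^{-1}(\Delta_{2^{-k}})$. On a punctured neighborhood of $(p,p)$ decompose
\begin{equation}
\log d_V(z,w)+g_V(z,w)=\log\frac{d_V(z,w)}{|u-v|}+\Bigl(g_V(z,w)+\log|u-v|\Bigr).
\end{equation}
Writing the metric in the chart as $\omega=\tfrac{i}{2}\,h(u)\,du\wedge d\bar u$ with $h>0$ smooth, any sufficiently short path between $z$ and $w$ stays in a small coordinate ball $B$, so $(\inf_B\sqrt h)\,|u-v|\le d_V(z,w)\le(\sup_B\sqrt h)\,|u-v|$ for $z,w$ near $p$; letting $B$ shrink, $d_V(z,w)/|u-v|$ tends to $\sqrt{h(\psi(p'))}$ as $(z,w)\to(p',p')$ for every $p'$ near $p$, uniformly enough that this ratio extends to a continuous, strictly positive function near the diagonal. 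Hence $\log\bigl(d_V(z,w)/|u-v|\bigr)$ extends continuously across $(p,p)$, and the whole problem reduces to the second term $\gamma(z,w):=g_V(z,w)+\log|u-v|$.

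The analysis of $\gamma$ proceeds in three steps. First, property (ii) of the Green function, applied to the embeddings $\zeta\mapsto\psi^{-1}(\psi(z)+\varepsilon\zeta)$, shows that for each fixed $z$ the function $w\mapsto\gamma(z,w)$ extends harmonically across $w=z$, hence is harmonic on all of $W_1$; by symmetry of $g_V$ and of $|u-v|$, the function $\gamma$ is separately harmonic on $W_1\times W_1$. Second, $\gamma$ is bounded near $(p,p)$: the maximum and minimum principles for the harmonic function $\gamma(z,\cdot)$ on $W_1$ reduce this to a bound for $\gamma(z,\cdot)$ on $\partial W_1$ that is uniform for $z\in W_2$, and such a bound holds because $g_V$ is continuous, hence bounded, on the compact set $\overline{W_2}\times\partial W_1$ while $|u-v|$ stays bounded and bounded away from $0$ there. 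Third, the interior gradient estimate for harmonic functions, applied to $\gamma(z,\cdot)$ and, symmetrically, to $\gamma(\cdot,w)$ on $W_1$ together with the uniform sup-bound just obtained, shows that $\gamma$ is Lipschitz in each variable with a constant independent of the other variable on $W_3\times W_3$, hence jointly Lipschitz there; in particular $\gamma$ extends continuously across $(p,p)$.

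Putting the two terms together yields a continuous extension of $\log d_V+g_V$ near every diagonal point, which combined with continuity off the diagonal gives the asserted continuous function on $V\times V$. The one place where genuine work is needed is the joint continuity of $\gamma=g_V+\log|u-v|$ at the diagonal; the argument above is self-contained (separate harmonicity, a uniform maximum-principle bound, interior estimates), but one may equally invoke the classical regularity statement that in a holomorphic chart $g_V(z,w)=-\log|u-v|+(\text{jointly smooth function of }(z,w))$.
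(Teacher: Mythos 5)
Your proof is correct and follows essentially the same route as the paper's: both split $\log d_V+g_V$ into $\log\bigl(d_V(z,w)/|z-w|\bigr)$ (in a holomorphic chart) plus $g_V(z,w)+\log|z-w|$, both handle the first piece by comparing geodesic and Euclidean lengths of short paths in a shrinking coordinate ball, and both handle the second piece via its separate harmonicity and a uniform bound near the diagonal. The only real divergence is in the last step: the paper passes from separate harmonicity and boundedness to joint continuity by invoking Harnack's inequality, whereas you use interior gradient estimates to get joint Lipschitz continuity. The two devices are interchangeable here, and your version is slightly cleaner and more explicit. One small point where you are more careful than the source: writing the metric as $\omega=\tfrac{i}{2}h\,du\wedge d\bar u$, the limit of $d_V(z,w)/|u-v|$ along the diagonal is a constant multiple of $\sqrt{h}$, not of $h$; the paper states the limit as $F(z_0)$ (its metric coefficient) rather than its square root, a harmless slip that does not affect the conclusion, since only continuity and positivity of the limit function are used.
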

\begin{proof} Fix a small disk $D$ in $V$ with coordinate $z$. We claim that the function $g_V(z,w) +\log(|z-w|)$ is continuous on $D\times D$.

Indeed, denote by $h_V(z,w):=g_V(z,w) +\log(|z-w|)$. By properties of the Green function, for every $z_0$ and $w_0$ in $D$, the functions $h_V(z_0, w)$ and $h_V(z,w_0)$ are harmonic and bounded on $D$. Fix $z_0\in D$. We have that $|h_V(z,w)-h_V(z_0,z_0)|\leq |h_V(z,w)-h_V(z_0,w)|+|h_V(z_0,w)-h_V(z_0,w_0)|$. By Harnack's inequality, the two terms of this sum are bounded by $\epsilon(h_V(z_0,w_0)+A)$ as soon as $(z,w)$ is sufficiently near to $(z_0,w_0)$. The claim follows. 

On the other side, we claim that, on the same disk, also the function ${{d_V(z;w)}\over{|z-w|}}$ extends to a positive continuous function on $D\times D$.

We may suppose that, on $D$, the hermitian metric is give by a $(1,1)$ form  $\omega= i F(z)dz\wedge d\overline z$ where $F(z)$ is a positive smooth function on $D$. 

In order to prove the claim, we will prove that, given $z_0\in D$, then $\lim_{(z;w)\to (z_0.z_0)} {{d_V(z;w)}\over{|z-w|}}=F(z_0)$. Fix $\epsilon>0$.  We can choose two concentric disks $U_\epsilon\subset V_\epsilon$ centered in $z_0$ such that, for every $z\in U_\epsilon$ we have
$F(z_0)-\epsilon\leq F(z)\leq F(z_0)+\epsilon$ and, for every couple $z$ and $w$ in $V_\epsilon$ the geodesic curve between $z$ and $w$ with respect to the metric $\omega$ is entirely contained in $V_\epsilon$ (in order to obtain this, it suffices that, for every $z\in U_\epsilon$,  the set of points with $\omega$--distance less or equal to $2\epsilon$ from $z$ is entirely contained in $V_\epsilon$). 

Let $z$ and $w$ in $U_\epsilon$. We denote by $\alpha_\omega(z,w)$  and $\alpha_s(z,w)$  the geodesic paths between $z$ and $w$ with respect to the metric $\omega$ and the standard euclidean metric on the disk respectively. If $\beta$ is a path, we denote by $\ell_\omega(\beta)$ and $\ell_s(\beta)$ the length of it with respect to the metric $\omega$ and the standard euclidean metric.

We recall that the distance between $z$ and $w$ may be defined in two ways: either it is the length of the geodesic path between $z$ and $w$ or it is the minimum between the lengths of all the paths between them. 

If $z$ and $w$ are points in $U_\epsilon$ we have that $d_V(z,w)\leq \ell_\omega(\alpha_s(z,w))(\leq F(z_0)+\epsilon)|z-w|$ and $(F(z_0)-\epsilon)|z-w|\leq (F(z_0)-\epsilon)\ell_s(\alpha_\omega(z,w))\leq \ell_\omega(\alpha_\omega(z,w))=d_V(z,w)$. Since $\epsilon$ is arbitrarily small, the claim follows. Thus $\log(d_V(z,w))-\log(|z-w|)$ extends to a continuos function on $D$.

Consequently $g_V(z,w)+log(d_V(z,w))$, being the sum of two continuous functions is continuous.\end{proof}

As a corollary of the Lemma above, we find that there are  constants $A_i$,  depending only on $U$, $V$ and the chosen metric, such that, for every $(z,w)\in U\times U$, we have
\begin{equation}\label{green and distance}
A_1\leq \log(d_V(z,w))+g_V(z,w) \leq A_2
\end{equation}

The main lemma we need is the following:
\begin{lemma}\label{distance lemma} Let $d_0$ a sufficiently big integer. There exist constants $C_i$, depending only on $U$ and $d_0$ depending only on $U$, $X$ and $\varphi$, with the following property: let $d$ be a positive integer, $W$ be a open set in $U$ and $r_W$ be its diameter. If $d\geq d_0$ and $r_W\leq C_1\exp(-{{C_2T}\over{d^{n-1}}})$, then there exists a section $s\in H^0(X,L^d)\setminus\{ 0\}$ such that $\varphi^\ast(s)$ vanishes on every point of $S_W(T)$.
\end{lemma}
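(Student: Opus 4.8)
The plan is to construct the desired section $s$ by a Siegel-Lemma argument, using the points of $S_W(T)$ as vanishing conditions and controlling the arithmetic size of the resulting section so that Liouville's inequality forces it to vanish at all of them. First I would fix the number field situation: each $z\in S_W(T)$ has $\varphi(z)\in X(K_z)$ for some finite extension $K_z/K$ of bounded degree (bounded in terms of $T$ via the height, but we do not even need that — we only need the vanishing to be imposed over a fixed $O_K$-lattice of sections), and the height $h_L(\varphi(z))\le T$. The evaluation maps $\ev_z:H^0(\cX,\cL^d)\to \varphi(z)^\ast\cL^d$ assemble into a single $O_K$-linear map $f:E_1\to E_2$, where $E_1=H^0(\cX,\cL^d)$ and $E_2=\bigoplus_{z} \varphi(z)^\ast\cL^d$. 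By \eqref{eq:counting1} the rank of $E_1$ grows like $d^N$ (here $N=n$, the dimension of $X$), while $\rk E_2=A_W(T)=\Card S_W(T)$, which by Theorem \ref{bombieri pila} we may take $\le \exp(\epsilon T)$; in particular, choosing $d$ so that $d^n$ dominates, $f$ is non-injective, so $\Ker f\ne 0$.

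Next I would estimate the quantities entering Siegel's Lemma \ref{siegel}. For $d$ large, $E_1$ is generated by sections of sup-norm $\le 1$ (the arithmetic ampleness fact quoted just before Lemma \ref{siegel}), so the constant $C$ in hypothesis (a) is $1$; for hypothesis (b), the operator norm of each $\ev_z$ is bounded in terms of the metric on $\cL$ and the archimedean data, uniformly over $z\in U$ — this is where one uses that $U$ is relatively compact and that the metric is smooth — giving $\|f\|_\sigma\le C_0$ for a constant depending only on $X,L,\varphi,U$. The slope bound $\widehat\mu_{\max}(E_2)$ is controlled by the heights $h_\cL(\varphi(z))\le c\,T$ (plus a bounded contribution from the archimedean norms of the evaluation of a fixed section, again uniform over $U$): since $\widehat\mu_{\max}$ of a direct sum is the max of the $\widehat\mu_{\max}$'s, we get $\widehat\mu_{\max}(E_2)\ll T$. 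Feeding $m=\rk E_1\asymp d^n$, $n'=\rk\Ker f\ge m-A_W(T)\ge m/2$ (once $d$ is chosen large relative to $T^{1/n}$, say; more precisely relative to $(\log A_W(T))^{1/n}$, but one can afford $d$ polynomial in $T$) into Lemma \ref{siegel}, one obtains a nonzero $s\in\Ker f$ with
\[
\log^+\|s\| \;\le\; A_3\bigl(d + T\bigr)
\]
for a constant $A_3$ depending only on $X,L,\varphi,U$. (The ratio $m/n'$ is bounded, which is what makes this clean.)

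Finally I would invoke Liouville's inequality \eqref{liouville1} together with the first-main-theorem bound of Theorem \ref{norms and zeros 0} to force $\varphi^\ast(s)$ to vanish on all of $S_W(T)$, not just at the $O_K$-point where we imposed it — wait, in fact the Siegel construction already makes $\ev_z(s)=0$ for every $z\in S_W(T)$, so $\varphi^\ast(s)$ vanishes at every point of $S_W(T)$ by construction, and nothing further is needed. Hmm — so the real content of the hypothesis "$r_W\le C_1\exp(-C_2T/d^{n-1})$" must be what guarantees we can take $d$ not too large: the point is that one wants $\varphi^\ast(s)$ to vanish to order $1$ at each of the $A_W(T)$ points using $\asymp d^n$ free coefficients, but to keep $d$ under control one must instead impose high-order vanishing on a much smaller set and then spread the vanishing across $W$ by the diameter bound. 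So the correct plan is: cover $W$ by its (single) small disk of radius $r_W$, impose vanishing to high order $k$ at the centre — giving $k$ linear conditions — solve by Siegel with $k\asymp d^n$, then use that on a disk of radius $r_W$ a function vanishing to order $k$ at the centre with controlled sup-norm is, by the Schwarz-type / mean-value estimate, bounded by $(\text{radius ratio})^k$ times its norm on a slightly larger disk; comparing with the Liouville lower bound $\log\|\varphi^\ast(s)\|(z)\ge -[K:\Q]h_\cL(\varphi(z))(\log\|s\|+d)\gg -T d$ at any rational point $z\in S_W(T)$ where $s$ does not vanish, one gets a contradiction as soon as $k\log(1/r_W)\gg Td$, i.e. as soon as $r_W\le C_1\exp(-C_2T/d^{n-1})$ (using $k\asymp d^n$). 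The main obstacle is precisely this last comparison: making the Schwarz-lemma estimate on $W$ uniform — i.e. relating the sup-norm of $\varphi^\ast(s)$ on the small disk $W$ to $\|s\|_{\sup}$ on $X$ via Theorem \ref{norms and zeros 0} and the first main theorem — and checking that all constants there depend only on $U,X,\varphi$ and not on $d$ or $T$, so that the threshold $C_1\exp(-C_2T/d^{n-1})$ comes out with the stated shape.
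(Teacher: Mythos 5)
Your initial Siegel--Lemma setup (evaluation map to $\bigoplus_z\cL^d|_{\varphi(z)}$, slope bound $\widehat{\mu}_{\max}(E_2)\leq cT$, bounded operator norm) is exactly right and is precisely what the paper does; but you then correctly notice that imposing vanishing at \emph{all} of $S_W(T)$ is problematic because $\Card S_W(T)$ is not a priori controlled, and your appeal to Theorem \ref{bombieri pila} to bound it is circular (this lemma is a step in its proof). Your pivot, however, does not work. Imposing vanishing of $\varphi^\ast(s)$ to order $k\asymp d^n$ at the \emph{centre} of $W$ is not a collection of $O_K$-linear conditions: the centre is an arbitrary complex point of $M$, $\varphi$ is merely holomorphic and not defined arithmetically, so the jet conditions of $\varphi^\ast(s)$ at that point are $\C$-linear and Siegel's Lemma \ref{siegel} simply does not apply to them. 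If instead you impose vanishing of $s$ on $X$ to order $m$ at a $K$-rational point $\varphi(z_0)$ (which \emph{is} $O_K$-rational), you pay $\asymp m^n$ conditions but only gain order-$m$ vanishing of $\varphi^\ast(s)$ along the curve; $m^n\asymp d^n$ forces $m\asymp d$, the Schwarz estimate gives a factor $r_W^{O(d)}$ rather than $r_W^{O(d^n)}$, and the threshold degrades to $r_W\leq C\exp(-CT)$ with no $d^{n-1}$ in the denominator, which is not strong enough for the covering argument in Theorem \ref{bombieri pila}.

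The missing idea is the paper's propagation mechanism. One imposes \emph{simple} vanishing at a subset $H_W(d)\subset S_W(T)$ of a \emph{fixed} cardinality $A$ with $(1-2\epsilon)h^0(X,L^d)\leq A\leq(1-\epsilon)h^0(X,L^d)$ (if $\Card S_W(T)<A$ the lemma follows by linear algebra); Siegel then gives a nonzero $s$ with $\varphi^\ast(s)|_{H_W(d)}=0$ and $\log\Vert s\Vert_{\sup}\leq C_1dT$. To show $\varphi^\ast(s)$ vanishes on all of $S_W(T)$, assume $\varphi^\ast(s)(w)\neq0$ for some $w\in S_W(T)$ and apply the Nevanlinna First Main Theorem on a fixed neighbourhood $V\supset\overline U$ with base point $w$: the left side is at most $dA_2+\log\Vert s\Vert_{\sup}\leq C'dT$, while the right side contains the Green-function mass $\sum_{z\in H_W(d)}g_V(w,z)$. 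Since all $A\asymp d^n$ points of $H_W(d)$ and the point $w$ lie in $W$ of diameter $r_W$, estimate \eqref{green and distance} gives $g_V(w,z)\geq A_1-\log r_W$ term by term, so this sum is at least $A(A_1-\log r_W)$; Liouville gives $\log\Vert\varphi^\ast(s)\Vert(w)\geq -C''dT$. Rearranging yields $\log r_W\geq B_2-B_3T/d^{n-1}$, contradicting the diameter hypothesis. It is this accumulated Green-function contribution over $\asymp d^n$ points all crowded into the tiny set $W$, not high-order vanishing at a single point, that produces the crucial $d^{n-1}$ in the denominator.
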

\begin{proof}  Denote by $h^0(X,L^d)$ the rank of $H^0(\cX, \cL^d)$, it is well known that  can find a positive constant $B_1$ such that, for every $\epsilon_1>0$ and $d$ sufficiently big, we have 
\begin{equation}
B_1(1-\epsilon_1)d^n\leq h^0(X,L^d)\leq B_1(1+\epsilon_1) d^n.
\end{equation}
We also recall that, by Zhang's theorem \cite{Zhang}, we may suppose that $H^0(\cX,\cL^d)$ is generated by sections of norm less or equal than one. 

Fix an $\epsilon>0$ and we suppose that $d$ is big enough to have that $\epsilon h^0(X,L^d)>1$. 
Choose an integer $A$ such that $(1-2\epsilon)h^0(X,L^d)\leq A\leq (1-\epsilon)h^0(X,L^d)$ and a subset $H_W(d)\subset S_W(T)$ of cardinality $A$ (if the cardinality of $S_W(T)$ is smaller than the Lemma easily follows from linear algebra). 

Denote by $E(T)$ the  $O_K$ module $\oplus_{z\in H(T)}\cL^d|_{f(z)}$. The rank of $E$ is $A$ and $\mu_{\max}(E(T))\leq dT$.

We have a natural restriction map
\begin{equation}
\delta_T: H^0(\cX,\cL^d)\longrightarrow E(T).
\end{equation}

By Gromov theorem \ref{gromov}, if we put on $H^0(\cX, \cL^d)$ the $L_2$ hermitian structure and on $E(T)$ the direct sum hermitian structure, the norm of $\delta$ is bounded by $C_0d$ for a suitable constant $C_0$. 

Denote by $K(T)$ the kernel of $\delta_T$ and by $k(T)$ its rank. By construction we have that 
\begin{equation}
{{h^0(X,L^d)}\over{k(T)}}\leq{ {h^0(X,L^d)}\over {h^0(X,L^d)-(1-\epsilon)h^0(X,L^d)}}={{1}\over{\epsilon}}.
\end{equation}
We may then apply Siegel Lemma \ref{siegel} and we obtain that there is a non vanishing section $s\in H^0(\cX, \cL^d)$ such that $\varphi^\ast(s)$ vanishes on every point of $E(T)$ and $\log\Vert s\Vert_{\sup}\leq C_1dT$ for a suitable constant $C_1$ independent on $T$. 

Let $w\in S_W(T)$ we will now prove that $\varphi^\ast(s)$ vanishes also on $w$.  Suppose that $\varphi^\ast (s)$ do not vanish on $w$. 

We apply Nevanlinna First Main Theorem to the section $\varphi^\ast(s)$ over the open set $V$ with base point $w$ and obtain a constant  $A_2$ independent on $w$ such that
\begin{equation}
dA_2+\int_{\partial V}\log\Vert \varphi^\ast(s)\Vert d\mu_{V,w}\geq \sum_{z\in S_W(T)}g_w(w,z)+\log\Vert\varphi^\ast(s)\Vert(w).
\end{equation}
Remark that $A_2$ is independent on $w_0$ because of Lemma \ref{uniform T}. From the upper bound of the norm of $\varphi^\ast(s)$, the estimate \ref{green and distance}, the hypothesis on the diameter of $W$ and Liouville inequality \ref{liouville1} we obtain
\begin{equation}
dA_1+C_1dT\geq A(A_1-\log(r_W))-dT-A_3
\end{equation}
for suitable constants $A_i$ independent on $w$ (and on $W$).
Since $A\geq B_1\cdot d^n$ (for a suitable constant depending only on $L$ and $d$), we obtain, constants $B_i>0$ (independent on $w$) such that
\begin{equation}
\log(r_w)\geq B_2-B_3{{T}\over{d^{n-1}}}.
\end{equation}
And this contradicts the hypothesis of the Lemma as soon as $C_i>B_i$. \end{proof} 

\begin{proof} ({\it of Theorem \ref{bombieri pila}}) Now the proof is immediate. We fix the constants $C_i$ as in the Lemma \ref{distance lemma} and $d$ a sufficiently big integer such that ${{C_2}\over{d^{n-1}}}<\epsilon$. 

For every $T$, we may write $U=\bigcup W_i$, were $W_i$ are open set of diameter at most 
$C_1\exp(-{{C_2}\over{d^{n-1}}}T)$. 

The number of these $W_i$ is at most $2C_1\exp({{2C_2}\over{d^{n-1}}}T)$. 

By Lemma \ref{distance lemma}, for every $W$ there is a non vanishing section $s_W\in H^0(X;L^d)$, such that $S_W(T)\subset\{ z\in U\;/\; \varphi^\ast(s_W)=0\}$.  By Theorem \ref{norms and zeros}, for each $s_W$, the cardinality of  the set $\{ z\in U\;/\; \varphi^\ast(s_W)=0\}$ is bounded by a constant (depending on $d$). Thus the conclusion of the Theorem follows. \end{proof}

We would like to remark that, strictly speaking, one could give a proof of Theorem \ref{bombieri pila} simply by reduction to the case when $U$ is a unit disk (with a a coordinate $z$). This would allow, essentially to avoid Lemma \ref{metrics and green} (or more specifically, use an explicit  and much simpler version of it on a disk). Never the less, the interest of this proof is  to point out the main analytic tool which is needed which is Lemma \ref{metrics and green}. 

\subsection{On distribution of rational points on a Riemann Surface} Examples provided by Surroca \cite{surroca}, show that that the bound of Bombieri and Pila, Theorem \ref{bombieri pila}, cannot be improved. Never the less, given an analytic map as above, we may ask for which value of $T$ the number $A_U(T)$ is big and for which it is not. We will show now that, for many values of $T$, the number $A_U(T)$ is actually bounded by a polynomial in $T$. In order to quantify this we give the following definitions: 

\begin{definition}  Let $\varphi :M\to X$, and $U$ as in Theorem \ref{bombieri pila}. Let $\gamma\in \bR>0$ and $\epsilon>0$.  denote by $L(\varphi,\gamma,\epsilon )\subset \bR$ the following set:
\begin{equation}
L(\varphi,\gamma, \epsilon):=\left\{ T\in \bR\; /\; A_U(T)\leq \epsilon T^\gamma\right\}.
\end{equation}
\end{definition}

In this section we will prove that, even if the Bombieri Pila estimate is optimal, for big values of $\gamma$ the set $L(\varphi,\gamma, \epsilon)$ is very big. Actually  it is almost as big as we want. 

\begin{definition} Let $A>1$ be a real number. We will say that an interval $I\subset \bR$ is {\rm geometrically wider than $A$} if there is $t\in I\cap ]1,+\infty[$ such that the interval $[t,At]$ is contained in $I$.\end{definition}

\begin{theorem}\label{rare distribution 1} Let $\varphi :M\to X$, and $U$ as in Theorem \ref{bombieri pila}. Let $\gamma> {{n}\over{n-1}}$, $A>1$ and $\epsilon>0$.  
Then the set  $L(\varphi,\gamma, \epsilon)$ is unbounded and contains infinitely many  intervals  $I\subset \bR$ which are geometrically wider than $A$. 
 \end{theorem}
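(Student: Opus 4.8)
The idea is to argue by contradiction, combining the construction of auxiliary sections via Siegel's Lemma (as in Lemma \ref{distance lemma}) with the degree bound of Proposition \ref{norms and zeros}. Suppose that for some $\gamma>\frac{n}{n-1}$, some $A>1$, and some $\epsilon>0$, the set $L(\varphi,\gamma,\epsilon)$ fails to contain a geometrically wider than $A$ interval for all large $t$; that is, beyond some $t_0$, every interval $[t,At]$ contains a point $T$ with $A_U(T)>\epsilon T^\gamma$. I would fix the whole open set $U$ (or a fixed small subdomain $W\subset U$ of fixed diameter) and run the Siegel--Liouville machine on $U$ itself rather than on shrinking pieces: for a parameter $d$ to be chosen, one produces a nonzero $s\in H^0(X,L^d)$ vanishing on a prescribed subset $H\subset S_U(T)$ of cardinality $\approx h^0(X,L^d)\sim B_1 d^n$, with $\log\|s\|_{\sup}\le C d T$ by the slope bound $\widehat\mu_{\max}(E(T))\le dT$.

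**Key steps.** First, I would make precise the counting inequality: if $A_U(T)$ is large, say $A_U(T)\ge \epsilon T^\gamma$, then I want to choose $d=d(T)$ so that $h^0(X,L^d)\sim B_1 d^n$ is comparable to $A_U(T)$, i.e. $d\asymp T^{\gamma/n}$. With this choice Siegel's Lemma produces $s$ vanishing on essentially all of $S_U(T)$ (all but an $\epsilon$-fraction, which one absorbs), with $\log\|s\|_{\sup}\lesssim dT\asymp T^{1+\gamma/n}$. Second, apply Proposition \ref{norms and zeros} (the degree bound on $U$): $\deg_U(s)\le A\log\|\varphi\|(L,U,d)+B d$. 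Here is where the Liouville inequality enters through the definition of $\|\varphi\|(L,U,d)$ — one needs a lower bound for $\|\varphi^\ast(s)\|$ at *some* point of $U$ where it does not vanish, and the natural candidate is a rational point of controlled height; but since $s$ may vanish at all the low-height rational points we put into $H$, the argument must instead use a rational point of height in the *next* window $[T, AT]$ (or a slightly larger height), and this is exactly where the hypothesis-for-contradiction about consecutive intervals is used: there is such a point $z'$ with $\varphi(z')\in X(K)$, bounded height $\asymp T$, and $\varphi^\ast(s)(z')\ne 0$, so Liouville gives $\log\|\varphi^\ast(s)\|(z')\ge -c\,[K:\Q] h_L(\varphi(z'))(\log\|s\|+d)\gtrsim -T\cdot T^{1+\gamma/n}$. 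Third, combine: $\deg_U(s)\lesssim T^{2+\gamma/n}$ on one hand, while $\deg_U(s)\ge \#H\gtrsim \epsilon T^\gamma$ on the other. Since $\gamma>\frac{n}{n-1}$ is equivalent to $\gamma>2+\gamma/n$ being *false* — wait, one checks $\gamma>2+\gamma/n\iff \gamma(1-1/n)>2\iff\gamma>\frac{2n}{n-1}$; so with the sharper bookkeeping (using $\log\|s\|_{\sup}\lesssim dT$ and $d\asymp T^{\gamma/n}$, and being careful that the height used in Liouville can be taken $\asymp T$ not $AT$) one wants the exponent on the right to be strictly less than $\gamma$, giving a contradiction for $T$ large. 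I would tune the choice of $d$ (perhaps $d\asymp T^{(\gamma-1)/(n-1)}$, matching the exponent $n-1$ appearing in Lemma \ref{distance lemma}) to make the threshold come out exactly $\gamma>\frac{n}{n-1}$; the bound $r_W\le C_1\exp(-C_2T/d^{n-1})$ in Lemma \ref{distance lemma} already encodes the right balance, and the cleanest route is probably to invoke Lemma \ref{distance lemma} directly with $W=U$ (or a fixed ball) and $d$ chosen so that the diameter constraint is met precisely when $A_U(T)$ forces $d$ to be large. Finally, unboundedness of $L(\varphi,\gamma,\epsilon)$ and the existence of infinitely many such intervals follow because the contradiction is obtained for arbitrarily large $t$.

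**Main obstacle.** The delicate point is the Liouville step: to lower-bound $\|\varphi^\ast(s)\|$ one needs a rational point $z'\in U$ at which $s$ does *not* vanish and whose height is not much larger than $T$ (otherwise the Liouville loss $h_L(\varphi(z'))(\log\|s\|+d)$ is too big and the estimate collapses). Producing such a point is precisely what the negation of the theorem supplies — in the window just above $T$ there are still many rational points of height $\le AT$, and $s$, being a *single* section of bounded degree, can kill only $\deg_U(s)$ of them, so if $\deg_U(s)$ were already known to be small we would be done, but that is circular; the correct order is: assume toward contradiction that infinitely often the intervals $[t,At]$ fail, deduce the existence of such $z'$ from the abundance of rational points there, run the above inequality chain, and extract the contradiction. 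Making this bootstrap clean — and in particular ensuring the height of $z'$ stays $O(T)$ so the exponent threshold is exactly $\frac{n}{n-1}$ rather than something weaker — is the part that requires care, and it also explains (as the authors note) why this argument does not control the location of the intervals $I_t$: the contradiction is non-effective in $t$.
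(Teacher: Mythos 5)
Your plan --- argue by contradiction, build one auxiliary section by Siegel's Lemma, then push it through the First Main Theorem and a Liouville lower bound --- is the right one, and is the paper's. But there are two genuine gaps, and the patches you suggest do not close them.

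The first is arithmetic. You invoke Liouville in the form $\log\|\varphi^\ast(s)\|(z')\gtrsim -h_L(\varphi(z'))\bigl(\log\|s\|+d\bigr)$; with $d\asymp T^{\gamma/n}$, $\log\|s\|\lesssim dT$ and $h\lesssim T$ this loses $\asymp T^{2+\gamma/n}$, hence (as you compute) the threshold $\gamma>\tfrac{2n}{n-1}$. Tuning $d$ cannot repair this: the chain reads $dT^2\gtrsim d^n$, i.e.\ $T^2\gtrsim d^{n-1}$, while the Siegel step forces $d^n\lesssim A_U(T)$, and under the mere hypothesis $A_U(T)\geq\epsilon T^\gamma$ the largest admissible choice is $d\asymp T^{\gamma/n}$, reproducing $\tfrac{2n}{n-1}$. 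The bookkeeping actually used in the paper's proof (see the term $C_7T_1^{\gamma/n}T_{n+1}+C_8T_1^{\gamma/n+1}$ in the inductive step) is the \emph{sharper} Liouville $\log\|\varphi^\ast(s)\|(z')\gtrsim -\bigl(\log^+\|s\|+d\,h_L(\varphi(z'))\bigr)$: only the degree, not $\log^+\|s\|$, gets multiplied by the height. That gives a loss $\asymp dT\asymp T^{1+\gamma/n}$ and the correct threshold $\gamma>1+\gamma/n\iff\gamma>\tfrac{n}{n-1}$. Without this correction you prove a strictly weaker statement.

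The second is logical. You route the argument through Proposition \ref{norms and zeros}, which needs a lower bound on $\|\varphi^\ast(s)\|_U$ supplied by a controlled-height rational point where $s$ is nonzero; you observe yourself that producing such a point is circular, and your proposed ``correct order'' does not actually break the circle. The paper does not bound $\deg_U(s)$ from above at all. Having built $s$ (for $d_1\asymp T_1^{\gamma/n}$) vanishing on a large subset $H_U(T_1)$, it supposes toward a \emph{sub}-contradiction that $\varphi^\ast(s)(w_0)\neq 0$ for some individual $w_0\in S_U(T_1)$; FMT with base point $w_0$ together with Liouville at $w_0$ then gives $C_5T_1^{\gamma/n+1}\geq C_6T_1^\gamma$, impossible for large $T_1$ --- so $s$ in fact vanishes on \emph{all} of $S_U(T_1)$. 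The geometric-width hypothesis $T_{n+1}\leq AT_n$ then enters precisely to iterate this with base points in $S_U(T_{n+1})$: it keeps the Liouville term $d_1T_{n+1}$ of size $\lesssim T_n^{\gamma/n+1}$, still dominated by $T_n^\gamma$, so the same fixed $s$ vanishes on $S_U(T_n)$ for every $n$. The final contradiction is that the nonzero holomorphic section $\varphi^\ast(s)$ would then have infinitely many zeros in the relatively compact $U$, which is impossible --- not a degree comparison at a single $T$. Your version uses only one window and so never really uses the $A$-wide interval structure, which is where the real work is.
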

 
 This theorem generalize Theorem 1.3 of \cite{surroca}. The Theorem means that there are infinitely intervals of the form $[r, Ar]$ with $\sup\{r\}=+\infty$ and such that, for every $T$ in one of these intervals,  the number $A_U(T)$ is "small". 
 
 \begin{proof} As in the proof of Theorem \ref{bombieri pila}, we fix a relatively compact open set $V$ containing $\overline{U}$. We use notations of the proof of Lemma \ref{distance lemma}. 
 
 We suppose that the conclusion of the the Theorem is false. Consequently we may find a strictly increasing sequence $(T_n)_{n\in \bN}$ such that:
 
 -- For every $n$ we have $T_{n+1}\leq A\cdot T_n$;
 
 -- $A_U(T_n)\geq \epsilon T_n^\gamma$;
 
 -- $\lim_{n\to\infty} T_n=+\infty$.
 
 We may also suppose that $T_1$ is very big. In particular we suppose that $(\epsilon T_1)^{\gamma/n}$ is very big. 
 
 Fix an integer $d_1$ such that $(\epsilon T_1)^{\gamma/n}\leq d_1\leq (\epsilon T_1)^{\gamma/n}+1$ and a subset $H_U(T_1)\subseteq S_U(T_1)$ of cardinality $A_{1}$, where $A_1$ is  an integer which verify $(1-2\epsilon)h^0(X,L^d)\leq A_1\leq (1-\epsilon)h^0(X,L^d)$.

 Following the same strategy in the proof of Lemma \ref{distance lemma}, an application of Siegel Lemma \ref{siegel} allows to construct a non vanishing global section $s\in H^0(\cX, \cL^d)$ such that:
 
 -- There exists a constant $C_1$ independent on $T_1$, on $d_1$ such that $\log\Vert s\Vert\leq C_1 T_1^{{{\gamma}\over{n}}+1}$;
 
 -- $\varphi^\ast(s)$ vanishes on every point of $H_U(d_1)$. 
 
 Again, as in the proof of Lemma \ref{distance lemma}, we will now prove that the constructed section $s$ actually vanishes on every point of $S_U(T_1)$. 
 
 Let $w_0$ be a point of $S_U(T_1)$ and suppose that $\varphi^\ast(s)$ do not vanish on it. We apply Nevanlinna First Main theorem, Lemma \ref{uniform T} and Liouville inequality to the section $\varphi^\ast(s)$ and the domain $V$ with base point $w_0$ and we obtain the existence of constants $C_i$, independent on $w_0$ and $s$ such that
\begin{equation}
C_2T^{\gamma/n} +C_1T_1^{{{\gamma}\over{n}}+1}\geq \sum_{z\in H_U(d_1)}g_V(w_0,z)+\log\Vert \varphi^\ast(s)\Vert(w_0).
\end{equation}
By Lemma \ref{metrics and green} and the fact that $\overline{U}$ is compact and contained in $V$, we may find a constant $C_3>0$ such that, for every $z\in U$ we have $g_V(z,w_0)\geq C_3$.

We observe that that the  cardinality $H_U(d_1)$ is lower bounded by $C_4d^n$ (with $C_4$ depending only on $X$ and $L$).  We apply Liouville Inequality to $\varphi^\ast(s)$ and $w_0$ and we find that, for $T_0$ sufficiently big, we have:
\begin{equation}
C_5 T_1^{{{\gamma}\over{n}}+1}\geq C_6 T_1^{\gamma}.
\end{equation}
This is impossible for our choice of $\gamma$ and for $T_1$ big enough. Thus $\varphi^\ast(s)$ vanishes on every point of $S_U(T_1)$.

By induction, we may suppose that $\varphi^\ast(s)$ vanishes on every points of $S_U(T_n)$ and we will now prove that it vanishes on every point of $S_U(T_{n+1})$. 

Suppose that $w_{n+1}$ is an element of $S_U(T_{n+1})$ such that $\varphi^\ast(s)(w_{n+1})\neq 0$.

Once again, we apply Nevanlinna First Main Theorem and Liouville inequality to $\varphi^\ast(s)$ and $V$ with base point $w_{n+1}$, using  the estimates above  on the Green functions we obtain the existence of constants $C_i$ independent on $w_{n+1}$:
\begin{equation}
C_2T_1^{\gamma/n} +C_1T_1^{{{\gamma}\over{n}}+1}\geq C_6T_n^\gamma-C_7T_1^{\gamma/n}T_{n+1}-C_8T_1^{{{\gamma}\over{n}}+1}.
\end{equation}
Which gives, since $T_{n+1}\leq A T_n$,
\begin{equation}
C_1T_n^{{{\gamma}\over{n}}+1}\geq C_6T_n^\gamma.
\end{equation}
And this is impossible as soon as $T_n$ is sufficiently big. 

The conclusion of the Theorem follows because $\varphi^\ast(s)$ is a section of an holomorphic line bundle thus it can vanish only on finitely many points of $U$.   \end{proof}
 
 \
 
\section{Riemann surfaces containing subsets of type $S_a$}

\

In the paper \cite{gasbarri2}, inspired by the work of Chudnovski \cite{CHUDNOVSKY}, we introduced a class of points on a projective variety which  verify some inequalities of Liouville type: the points of type $S_a$. Let's recall (and slightly modify) this definion:

Let $(X_K,\cL)$ be an arithmetically polarized projective variety defined over a number field $K$ of dimension at least two. 

\begin{definition}\label{points of type S 1}  Let $z\in X_K(\C)$. We will say that $z$ is of type $S$ (or that $z\in S(\cX)$) if we can find positive constants $a=a(z,\cL)$,  $A=A(z,\cL)$ depending on $z$ ,$\cL$  such that, for every positive integer $d$ and every non zero global section $s\in H^0(\cX, \cL^d)$ we have that
$$
\log\Vert s_{\sigma_0}\Vert_{\sigma_0}(z)\geq -A(\log^+\Vert s\Vert +d)^a.
$$
Moreover we will denote by $S(X_K)$ the subset of $X_{\sigma_0}(\C)$ of points of type $S$. If $a_0$ is fixed, we will denote by $S_{a_0}(X_K)$ the set of $S$--points of $X$ for which the involved constant $a$ is $a_0$. 
\end{definition}

Observe that $S(X_K)=\cup_{a\geq 0}S_a(X_K)$. 

We proved in \cite{gasbarri2} the following theorem:

\begin{theorem}\label{maingasbarri2} Let $X$ as before. The following properties hold:

a) If $a<\dim(X_K)+1$ then $S_a(X_K)=\emptyset$. 

b) If $a\geq \dim(X_K)+2$, the set $S_a(X_K)$ is full in $X_K(\C)$.

c) Let $Y\subset X(\C)$ be a compact Riemann surface and $a\geq \dim(X_K)+2$, then $Y\cap S_a(X_K)\neq \emptyset$ if and only if $Y\cap S_a(X_K)$ is full in $Y$. 
\end{theorem}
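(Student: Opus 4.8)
Since the theorem is quoted essentially verbatim from \cite{gasbarri2}, I will describe the argument one would run to reprove it; the same ideas reappear, in generalized form, in the sections above. Put $N=\dim X_K$. For part (a) the plan is a direct auxiliary‑section construction. Fix $z\in X_K(\C)$ and constants $A>0$, $a<N+1$. For a large degree $d$ the lattice $H^0(\cX,\cL^d)$ has rank $\asymp d^N$ and, by \ref{eq:counting1} together with arithmetic ampleness of $\cL$, it contains at least $\exp(c_1d^{N+1})$ integral sections of sup‑norm at most $\exp(c_2d)$. I would trivialize $\cL^d$ near $z$ by a local frame of bounded norm and evaluate all of them at $z$: one gets that many complex numbers in a disc of radius $\exp(c_2d+O(1))$, so Dirichlet's box principle produces two distinct sections whose difference $s$ satisfies $\log\Vert s_{\sigma_0}\Vert_{\sigma_0}(z)\le-c_3d^{N+1}$ while $\log^+\Vert s\Vert+d\ll d$. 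Since $a<N+1$, for $d$ large this contradicts $\log\Vert s_{\sigma_0}\Vert_{\sigma_0}(z)\ge-A(\log^+\Vert s\Vert+d)^a$, so $z\notin S_a(X_K)$. The delicate point is to keep the difference nonvanishing at $z$ (an $S_a$‑point only constrains sections that do not vanish at it): were the values at $z$ concentrated on very few points, then taking differences and letting $d$ grow would manufacture a nonzero section of every large twist of $\cL$ vanishing at $z$, contradicting ampleness; otherwise one passes to jets at $z$ and iterates.

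For parts (b) and (c) the plan is a Borel--Cantelli argument (Lemma~\ref{borelcantelli}) fed by a quantitative bound on the locus where a section is small. The analytic input is a Chebyshev--Remez-type inequality: for $s\in H^0(\cX,\cL^d)\setminus\{0\}$ and $0<\eps<1$,
\begin{equation*}
\mu_\omega\bigl(\{\,z\in X_{\sigma_0}(\C)\ :\ \Vert s_{\sigma_0}\Vert_{\sigma_0}(z)\le\eps\,\Vert s\Vert_{\sup}\,\}\bigr)\ \ll\ d\,\eps^{2/d},
\end{equation*}
obtained by covering the hypersurface $\{s=0\}$ (of degree $\asymp d$) and estimating the volume of an $\eps^{1/d}$-tube around it. For (b) I would fix $a\ge N+2$ and $A>0$, put $B_s:=\{z:\log\Vert s_{\sigma_0}\Vert_{\sigma_0}(z)<-A(\log^+\Vert s\Vert+d)^a\}$, and use a standard lower bound for $\log\Vert s_{\sigma_0}\Vert_{\sup}$ in terms of $d$ and $\log^+\Vert s\Vert$ (arithmetic amplitude of $\cL$) to realize $B_s$ as a sublevel set with $\eps\asymp\exp(-\tfrac{A}{2}(\log^+\Vert s\Vert+d)^a)$; the displayed estimate then gives $\mu_\omega(B_s)\ll d\exp(-\tfrac{A}{2d}(\log^+\Vert s\Vert+d)^a)$, where the hypothesis $a\ge N+2>N+1$ is exactly what keeps the exponent negative. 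Summing over all $d$ and all integral sections of $\cL^d$, counted by \ref{eq:counting1}, the series converges precisely because $a\ge N+2$; Borel--Cantelli yields that almost every $z$ lies in only finitely many $B_s$, and since the constant in the definition of $S_a$ is allowed to depend on $z$ one absorbs the finitely many exceptional sections into a larger $A$, so almost every $z$ belongs to $S_a(X_K)$.

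Part (c) follows the same scheme restricted to the compact Riemann surface $Y$, via $s\mapsto s|_Y$, a section of the degree-$\asymp d$ line bundle $\cL^d|_Y$; here the measure estimate becomes $\mathrm{area}(\{z\in Y:\Vert s\Vert(z)\le\eps\Vert s\Vert_Y\})\ll d\,\eps^{2/d}$, normalized now by $\Vert s\Vert_Y=\sup_Y\Vert s\Vert$. This is where the hypothesis is used: a point $z_0\in Y\cap S_a(X_K)$ supplies $\log\Vert s\Vert_Y\ge\log\Vert s_{\sigma_0}\Vert_{\sigma_0}(z_0)\ge-A_0(\log^+\Vert s\Vert+d)^a$ for every $s$ not vanishing at $z_0$, which is precisely what converts a bound $\log\Vert s\Vert(z)<-A(\log^+\Vert s\Vert+d)^a$ into $\Vert s\Vert(z)/\Vert s\Vert_Y\le\exp(-(A-A_0)(\log^+\Vert s\Vert+d)^a)$; the same Borel--Cantelli computation on $Y$ then gives that $Y\cap S_a(X_K)$ is full. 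Conversely "full $\Rightarrow$ nonempty" is trivial, so the two conditions are equivalent. (Sections vanishing at $z_0$ but not identically on $Y$ are not directly controlled by $z_0$ and need a small separate argument, carried out in \cite{gasbarri2}.)

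The step I expect to be the main obstacle is the measure estimate for sublevel sets of sections and, above all, its uniformity in $d$: the exponent $2/d$ is exactly what makes the Borel--Cantelli series converge at $a=N+2$ and diverge below, so the whole dichotomy between (a) and (b)--(c) rests on getting this estimate sharp, both on $X_{\sigma_0}(\C)$ and on the curve $Y$. A secondary difficulty, already noted, is the bookkeeping around sections that vanish at the base point (in (a)) or at $z_0$ (in (c)), which is harmless but must be dealt with explicitly.
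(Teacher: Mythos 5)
The paper does not prove this theorem: it is quoted from \cite{gasbarri2} with a citation, so there is no internal proof to compare against. Your reconstruction is nevertheless in the right spirit and uses the same toolbox that this paper deploys in its later sections: a Siegel/pigeonhole construction of a section exponentially small at the base point for part (a) (compare the construction inside Lemma \ref{distance lemma} and Theorem \ref{rare distribution 1}), a Cartan--Jensen sublevel-set area estimate for a section of $L^d$ (compare Proposition \ref{bloch cartan2}, Lemma \ref{holomorphic estimate 2}, and Theorem \ref{main estimate}), and Borel--Cantelli (Proposition \ref{borelcantelli}) to convert that estimate into fullness, exactly as in the proof of the claim inside Theorem \ref{S sets}. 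Your bookkeeping of where the exponents $N+1$ and $N+2$ come from — $\exp(c\,d^{N+1})$ sections of bounded norm versus $d\,\epsilon^{2/d}$ for the tube measure — is the right dichotomy.

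Two remarks. First, the "delicate point" you raise in (a) (a difference $s_1-s_2$ might vanish at $z$) does not actually arise under the definition as stated: if a nonzero $s$ had $s(z)=0$ then $\log\Vert s\Vert(z)=-\infty$, which violates the $S_a$ inequality for every $A$, so an $S_a$-point cannot lie on any $K$-rational divisor and the evaluation map $s\mapsto s(z)$ is automatically injective on the lattice. Your proposed workaround via "contradicting ampleness" is not a valid argument (ampleness certainly does not preclude sections vanishing at a fixed point), but it is also unnecessary. Second, for (b) in higher dimension the estimate $\mu_\omega(\{\Vert s\Vert\le\epsilon\Vert s\Vert_{\sup}\})\ll d\,\epsilon^{2/d}$ needs more than "covering the zero hypersurface and estimating a tube" — the sublevel set need not be contained in a tube around the zero locus; one has to slice by analytic discs and apply the one-variable Cartan estimate fibrewise, which is precisely how the present paper organizes Theorem \ref{main estimate} and Lemma \ref{holomorphic estimate 4}. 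These are fixable, and the overall architecture is correct and matches the cited source's method as it is reflected in this paper.
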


Let $M$ be a Riemann surface and $U\subset M$ be a relatively compact open set. Let $\varphi: M\to X_\sigma(\C)$ be an holomorphic map with Zariski dense image. The interest of points of type $S_a$ is the following theorem, proved in \cite{gasbarri2} Theorem 10.1:

\begin{theorem} \label{theorem 10.1} We keep notations as in the previous section. If, for some real number $a$ we have $\varphi^{-1}(S_a)\cap U\neq \emptyset$, then
\begin{equation}
A_U(T)\ll T^a.
\end{equation}
\end{theorem}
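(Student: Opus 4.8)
The plan is to run the transcendence‑method scheme used for Theorems~\ref{bombieri pila} and~\ref{rare distribution 1}, but with the type‑$S_a$ inequality at a point in the role that Liouville's inequality at a rational point played there. Fix $z_0\in U$ with $\varphi(z_0)\in S_a(X_K)$ and let $A_S$ be the constant attached to $\varphi(z_0)$ in Definition~\ref{points of type S 1}; fix a relatively compact $V\supset\overline U$ and a basepoint as in Section~\ref{degree of a divisor}. Recall that $a\ge n+1$ by Theorem~\ref{maingasbarri2}(a), where $n=\dim X_K$.

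For a given $T$ I would choose a degree $d=d(T)$ and apply the Siegel Lemma~\ref{siegel} exactly as in the proof of Lemma~\ref{distance lemma}: to the restriction map $H^0(\cX,\cL^d)\to\bigoplus_{w\in H}\cL^d|_{\varphi(w)}$, where $H\subset S_U(T)$ has cardinality a fixed fraction of $h^0(\cX,\cL^d)$, using that $H^0(\cX,\cL^d)$ is generated by sections of sup‑norm $\le1$ (\cite{Zhang}), that the norm of the restriction map is controlled by \eqref{gromov}, and that $\widehat{\mu}_{\max}$ of the target is $\le[K:\Q]dT$ since each $\varphi(w)$ has height $\le T$. This produces a non‑zero $s\in H^0(\cX,\cL^d)$ with $\varphi^*(s)$ vanishing on $H$, with $\log^+\|s\|\ll dT$ and hence $\log\|s\|_{\sup}\ll dT$; and $\varphi^*(s)\not\equiv0$ since $\varphi$ has Zariski‑dense image.

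The heart of the argument is to feed $s$ into the degree‑of‑divisor estimate Theorem~\ref{norms and zeros 0} with $W=\{z_0\}$, giving $\deg_U(s)\le A\,d+B\big(\log\|s\|_{\sup}-\log\|\varphi^*(s)\|(z_0)\big)$, and then to use the type‑$S_a$ property of $\varphi(z_0)$ to bound the last term from below: $\log\|\varphi^*(s)\|(z_0)=\log\|s_{\sigma_0}\|_{\sigma_0}(\varphi(z_0))\ge-A_S(\log^+\|s\|+d)^a\gg-(dT)^a$. (Should $\varphi^*(s)$ vanish at $z_0$, replace the value by its leading jet there, at the cost of a multiplicity term, or arrange the Siegel section not to vanish at $z_0$.) Thus $\deg_U(s)\ll(dT)^a$, while $\deg_U(s)$ is at least the number of imposed zeros, which is $\asymp h^0(\cX,\cL^d)\asymp d^n$ (the Hilbert polynomial asymptotics recalled in the proof of Lemma~\ref{distance lemma}). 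Playing these two facts against the choice of $d$ — distinguishing the regime in which $A_U(T)$ is already $\le$ a fixed fraction of $h^0(\cX,\cL^d)$, so $A_U(T)\ll d^n$ directly, from the regime in which $\varphi^*(s)$ can further be forced to vanish on all of $S_U(T)$, so $\deg_U(s)\ge A_U(T)$ — should yield $A_U(T)\ll T^a$.

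The genuine difficulty is this balancing of parameters, not the Siegel or Nevanlinna input (both already present in Section~\ref{degree of a divisor} and in Lemma~\ref{distance lemma}). If one insists that $s$ vanish at \emph{every} point of $S_U(T)$, then $d$ must be $\asymp A_U(T)^{1/n}$, which forces $\log^+\|s\|\asymp dT$ and turns the estimate into $A_U(T)\ll A_U(T)^{a/n}T^a$ — vacuous for $a>n$, and here $a\ge n+1$. Obtaining the clean exponent $a$ therefore seems to require imposing far fewer vanishing conditions (improving the Siegel bound to $\log^+\|s\|\ll d+T$) together with a covering or bootstrap reducing to the subcritical range $A_U(T)\ll T^{\,n-1}$, or an iterative construction that peels off the rational points of $S_U(T)$ in successive batches. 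Making the type‑$S_a$ inequality at the single point $z_0$ propagate to a bound on the full count $A_U(T)$ is where the real content of Theorem~\ref{theorem 10.1} lies.
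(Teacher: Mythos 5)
Your outline correctly assembles the ingredients the paper actually uses for this result: the Siegel Lemma to manufacture an auxiliary section, Theorem~\ref{norms and zeros 0} applied with $W=\{z_0\}$, and the type-$S_a$ inequality at $\varphi(z_0)$ standing in for the Liouville lower bound. You also correctly sense the two-regime structure. The genuine gap is that you never realize the ``force $\varphi^*(s)$ to vanish on all of $S_U(T)$'' half of the dichotomy, and you talk yourself out of it by insisting that this forces $d\asymp A_U(T)^{1/n}$. It does not, and this is precisely the point of the propagation argument in Lemma~\ref{distance lemma} and Theorem~\ref{rare distribution 1}, which is the engine the paper reuses: fix $d\asymp T^{\gamma/n}$ with $\gamma$ a constant slightly larger than $n$ (\emph{independent} of $A_U(T)$), impose vanishing on a subset $H\subset S_U(T)$ of cardinality $\asymp h^0(\cX,\cL^d)\asymp d^n$ via Siegel, and then for any remaining $w_0\in S_U(T)$ with $\varphi^*(s)(w_0)\neq 0$ apply the Nevanlinna First Main Theorem on $V$ with base point $w_0$ together with Liouville's inequality at the \emph{rational} point $\varphi(w_0)$ of height $\le T$. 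The lower bound $\sum_{z\in H}g_V(w_0,z)\gtrsim d^n$ then overwhelms the upper bound $O(d+\log\|s\|_{\sup}-\log\|\varphi^*(s)\|(w_0))$, forcing $\varphi^*(s)(w_0)=0$. Once $s$ kills all of $S_U(T)$ with $d$ and $\log\|s\|_{\sup}$ under control, Theorem~\ref{norms and zeros 0} with $W=\{z_0\}$ and the type-$S_a$ bound give $A_U(T)\le\deg_U(s)\ll(\log^+\|s\|+d)^a\ll(dT)^a$.

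Two further remarks. First, your worry that the exponent $a$ may be out of reach for this scheme is in fact well founded: the argument just described, with $d\asymp T^{1+\epsilon}$ and $\log\|s\|\ll dT$, yields $(dT)^a\asymp T^{2a+\epsilon}$, which is exactly the bound of Theorem~\ref{poly T}, not $T^a$. The present paper does not reprove the sharper exponent $a$; it cites \cite{gasbarri2} for Theorem~\ref{theorem 10.1} and then (somewhat loosely) declares it a particular case of Theorem~\ref{poly T}. So the correct target for the method you are trying to reconstruct here is $T^{2a}$. Second, the parenthetical fallback ``replace the value by its leading jet'' is a red herring: after translating the base point to $w_1\in W$ achieving $\|s\|_W$, the proof of Theorem~\ref{norms and zeros 0} never needs $s$ to be non-vanishing at that point, and when $W=\{z_0\}$ the type-$S_a$ inequality already gives a nontrivial lower bound on $\|\varphi^*(s)\|(z_0)$ that is what you feed into Theorem~\ref{norms and zeros 0}; if instead $\varphi^*(s)(z_0)=0$ the type-$S_a$ hypothesis fails to apply only in the degenerate case $\|s\|_{\{z_0\}}=0$, which cannot occur since $\varphi$ has Zariski-dense image and $s\neq 0$.
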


Theorem \ref{theorem 10.1} can be weakened  a bit: instead of supposing that the norm is "big" on a point, we may suppose that it is "big" just on a compact subset of $U$.

\begin{definition}\label{sets of type S} Let $\varphi: M\to X_\sigma(\C)$ and $U$ as before. Let $B\subset U$ be a compact set and $a$ be a real number. We will say that $B$ is a subset of type $S_a$ of $M$  with respect to $\varphi$ if, we can find a positive constant $A>1$ such that, for every positive integer $d$ and $s\in H^0(\cX.\cL^d)\setminus\{0\}$ we have that
\begin{equation}
\log\Vert s\Vert_B\geq -A(\log^+\|s\|+d)^a.
\end{equation}
\end{definition}

It is possible to prove that, in this case, we must have $a\geq \dim(X_K)+1$. 

We can then generalize Theorem \ref{theorem 10.1} to subsets of type $S_a$:

\begin{theorem}\label{poly T} Let $\varphi: M\to X_\sigma(\C)$ and $U$ as before. Suppose that we can find a subset $B\subset U$ of type $S_a$ with respect to $\varphi$. Then 
\begin{equation}
A_U(T)\ll T^{2a}.
\end{equation}
\end{theorem}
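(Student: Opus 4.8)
The plan is to follow the same scheme used to prove Theorem \ref{bombieri pila}, but to replace the crude covering of $U$ by $\exp(\epsilon T)$ tiny sets with a much more economical covering: one chooses the degree $d$ of the auxiliary section as a function of $T$, so that a single section of degree $d \ll T$ kills all rational points in a piece $W$, and one needs only polynomially many pieces $W$. Concretely, I would first run a Siegel-Lemma construction exactly as in Lemma \ref{distance lemma}: given $W\subset U$ open with diameter $r_W$, a degree $d$, and a subset $H_W(T)\subset S_W(T)$ of cardinality $A_1 \sim (1-\epsilon)h^0(X,L^d)$, one obtains a nonzero $s\in H^0(\cX,\cL^d)$ vanishing on $H_W(T)$ with $\log\Vert s\Vert_{\sup}\le C_1 dT$. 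The novelty is that, to show $\varphi^\ast(s)$ vanishes on \emph{all} of $S_W(T)$, I do not apply the ordinary Liouville inequality \eqref{liouville1} at a point $w\in S_W(T)$ (where $\varphi^\ast(s)$ could be small), but instead apply the Nevanlinna First Main Theorem with base point a \emph{point of $B$}, say $w_1\in B$ with $\Vert\varphi^\ast(s)\Vert(w_1)=\Vert s\Vert_B$, and then use the type-$S_a$ hypothesis to bound $\log\Vert s\Vert_B \ge -A(\log^+\Vert s\Vert + d)^a \gg -A(dT)^a$ from below.

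Carrying this out: the First Main Theorem with base point $w_1\in B$, together with Lemma \ref{uniform T} and the bound $g_V(w_1,z)\ge a>0$ for $z\in U$ (from \eqref{green and distance}), gives
\begin{equation}
C d T + \log\Vert s\Vert_B \ge \sum_{z\in S_W(T)} n_z(s)\, g_V(w_1,z) \ge a\cdot\#H_W(T) \ge a\cdot c\, d^n,
\end{equation}
for some positive constants. But wait — the issue is that $\log\Vert s\Vert_B$ is a \emph{lower} bound, so what I actually get is that $\#H_W(T)$, hence $h^0(X,L^d)\sim d^n$, is bounded above by something like $C dT + A(dT)^a$. This forces $d^n \ll (dT)^a$, i.e. $d^{n-a} \ll T^a$, which (since $a\ge n+1 > n$) gives a constraint of the form $d \ll T^{a/(a-n)}$ only if $a>n$; in fact the clean way to run it is: \emph{fix} $d$ as large as allowed, namely the largest integer with $d^n \le \frac{1}{2}c^{-1}a^{-1} \cdot$ (the relevant bound), and derive a contradiction when $\varphi^\ast(s)(w)\ne 0$ for some $w\in S_W(T)$ unless $\log r_W$ is very negative. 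Running the same Nevanlinna/Liouville estimate at the bad point $w$ (using \eqref{liouville1} there to bound $\log\Vert\varphi^\ast(s)\Vert(w)\ge -[K:\Q]T(\log\Vert s\Vert + d) \gg -(dT)$ and $g_V(w,z)\ge a$) yields, after comparing with the $w_1\in B$ estimate, an inequality of the shape $\log r_W \ge -C'(dT)^{a-1}/d^{\,?}$; choosing $d$ optimally as a power of $T$ so that $(dT)^{a}\sim d^n$, one finds $d\sim T^{a/(n-a)}$...

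Let me restate the arithmetic cleanly, since this is the crux. Setting $d \asymp T^{\,a/(n-a)}$ is wrong when $a>n$; the correct balance is $h^0\sim d^n \asymp A(\log\Vert s\Vert+d)^a \asymp (dT)^a$, giving $d^{n-a}\asymp T^a$, so $d \asymp T^{-a/(n-a)} = T^{a/(a-n)}$, which is a positive power since $a > n$. With this $d$, the auxiliary section has degree $d\asymp T^{a/(a-n)}$, and the covering of $U$ by pieces of diameter $r_W$ small enough (an explicit negative power of $T$, coming from the bad-point estimate analogous to Lemma \ref{distance lemma}) requires $O(r_W^{-2}) = O(T^{\text{const}})$ pieces. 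Each piece contributes at most $\deg_U(s) \le A d + B(\log\Vert s\Vert_{\sup} - \log\Vert s\Vert_W) \ll d + dT \ll d T \asymp T^{1 + a/(a-n)}$ rational points by Theorem \ref{norms and zeros 0} (with $W=B$ playing the role of the compact set there). Multiplying the number of pieces by the per-piece bound, and optimizing all the exponents against each other, should collapse to $A_U(T)\ll T^{2a}$; the bookkeeping of exponents is where one must be careful, and balancing the "number of tiles" against "points per tile" against "the $d$ forced by the $S_a$ condition" is the only real obstacle — every individual estimate is already available from Lemma \ref{distance lemma}, Lemma \ref{uniform T}, Theorem \ref{norms and zeros 0}, the Siegel Lemma \ref{siegel}, and the defining inequality of a set of type $S_a$.
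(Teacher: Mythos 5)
The ingredients you assemble (Siegel Lemma, Nevanlinna First Main Theorem, the $S_a$ lower bound on $\|s\|_B$, Liouville at a bad point) are the right ones, but the way you combine them does not yield $T^{2a}$, and the tiling is a detour that you should have recognized as superfluous. In Lemma \ref{distance lemma} tiles of shrinking size are forced because $d$ is held \emph{fixed} while $T$ grows, making $r_W\le C_1\exp(-C_2T/d^{n-1})$ shrink exponentially; but as soon as you let $d$ grow like a power of $T$ bigger than $T^{1/(n-1)}$, the exponent $T/d^{n-1}$ stays bounded and a single ``tile'' $W=U$ suffices. That is what the paper does: it fixes $\gamma>n$, takes $d\sim T^{\gamma/n}$, runs the Siegel-plus-amplification argument of Theorem \ref{rare distribution 1} to produce \emph{one} section $s\in H^0(\cX,\cL^d)$ with $\log\|s\|_{\sup}\le C_1T^{\gamma/n+1}$ vanishing on all of $S_U(T)$, and then bounds $A_U(T)\le \deg_U(s)$ directly via Theorem \ref{norms and zeros 0} applied with the compact $W=B$, using the $S_a$ property to control $-\log\|s\|_B\ll(\log^+\|s\|+d)^a\ll(dT)^a$. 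With $\gamma$ close to $n$ this gives $T^{a(\gamma/n+1)}\to T^{2a}$.

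Your exponent bookkeeping has two concrete errors. First, the balance ``$d^n\asymp(dT)^a$'' is not the relevant one and, moreover, does not give $d\asymp T^{a/(a-n)}$: solving $d^n=(dT)^a$ yields $d^{n-a}=T^a$, i.e.\ $d=T^{a/(n-a)}$, a \emph{negative} power of $T$ since $a>n$ (the positive power you wrote would come from $d^n=(d/T)^a$). There is no reason to equate the rank of the Siegel system with the size of the $S_a$ lower bound; the constraint on $d$ comes from making the amplification step produce a contradiction, and that only requires $d^{n-1}\gg T^2$, after which one wants $d$ as \emph{small} as allowed so as to minimize $(dT)^a$. Second, in the per-tile bound you write $\deg_U(s)\ll d+B(\log\|s\|_{\sup}-\log\|s\|_W)\ll dT$, silently dropping the $-\log\|s\|_B\ll(dT)^a$ contribution, which is the \emph{dominant} term since $a\ge n+1>1$. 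With your choice $d\asymp T^{a/(a-n)}$ the true bound per section is $(dT)^a\asymp T^{a(2a-n)/(a-n)}$, which for $a=n+1$ already gives $T^{(n+1)(n+2)}$, far worse than $T^{2(n+1)}$. You explicitly flag the bookkeeping as the remaining obstacle; that obstacle is genuine, and the correct choice is $d\asymp T^{\gamma/n}$ with $\gamma$ just above $n$, not $d\asymp T^{a/(a-n)}$.
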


\begin{proof} The proof follows the same paths of the proof of Theorem \ref{rare distribution 1} or the proof of Theorem \ref{maingasbarri2}. For reader 's convenience, we give here a short sketch of it.  Fix  $\gamma>\dim(X_K)$. 

Let $T$ be sufficiently big. As in the proof of \ref{rare distribution 1}, we can find an integer $d$ with $T^{\gamma/n}\leq d\leq T^{\gamma/n}+1$ and a global section $s\in H^0(\cX,\cL^d)\setminus\{ 0\}$ such that:

-- $\log\Vert s\Vert_{\sup}\leq C_1T^{\gamma/n+1}$ for a constant $C_1$ independent on $T$;

-- for every $w\in S_U(T)$, we have $\varphi^\ast(s)(w)=0$.

We may now apply Theorem \ref{norms and zeros 0}  and conclude.\end{proof}

Of course, Theorem \ref{theorem 10.1} is just a particular case of Theorem \ref{poly T}. We will now see another application of Theorem \ref{poly T}:

\subsection{Rational points on leaves of one dimensional foliations}

Let $Z$ be a smooth quasi projective variety  of dimension $N$ defined over $K$. Let $T_Z$ its tangent bundle. A one dimensional foliation $\cF$ over $Z$ is a sub line bundle $H\hookrightarrow T_Z$ (the quotient is locally free). 

Fix a one dimensional foliation $\cF$ and a rational point $p\in Z(K)$. Denote by $\widehat{Z}_p$ the formal completion of $Z$ at $p$.

 The {\it formal leaf} of the foliation $\cF$ at $p$ is a formal morphism
$\iota:Spf(K[\![X]\!]):=\widehat{\bA}^1_0\to \widehat{Z}_p$ such that the natural differential morphism 
$\iota^\ast(\widehat{\Omega}^1_{Z_p/K})\to \widehat{\Omega}^1_{\widehat{\bA}^1_0/K}$ factorizes through the natural surjection $\iota^\ast(\widehat{\Omega}^1_{Z_p/K})\to \iota^\ast(H^\wedge)$. Since $K$ is of characteristic zero, the formal leaf exists and it is unique (up to reparametrization). 

By Frobenius theorem over $\C$, we can also find an analytic leaf of the foliation: this is an holomorphic morphism $h:\Delta_1\to Z_\sigma(\C)$ such that $h(0)=p$ and the natural morphism $h^\ast(\Omega^1_Z)\to \Omega^1_{\Delta_1}$ factorizes through the natural surjection $h^\ast(\Omega^1_Z)\to h^\ast(H^\wedge)$. 

By unicity of the formal leaf, the formal completion of the analytic leaf of $\cF$ at $p$ coincides with its formal leaf. 

Denote by $\bA^1_n$ the scheme $\Spec(K[X]/(X^{n+1}))$. It is called the $n$--th formal neighborhood of $0$ in $\widehat{\bA}^1_0$ and there is a  natural inclusion 
$j_n:\bA^1_n\hookrightarrow \widehat{\bA}^1_0$. Denote by $T_0$ the fibre at $0$ of the tangent bundle of $\widehat{\bA}^1_0$, we have a canonical exact sequence
\begin{equation}\label{jets}
0\longrightarrow T_0^{\otimes n}\longrightarrow \cO_{\bA^1_{n+1}}\longrightarrow\cO_{\bA^1_{n}}\longrightarrow 0.
\end{equation}

By construction we have natural inclusions $\iota_n:=\iota\circ j_n:\bA^1_n\to Z$ which factorize through $\iota$. 

Let $\overline{Z}$ be a (smooth) projective compactification of $Z$ and $L$ be a ample line bundle over it. A global section $s\in H^0(\overline{Z},L^d)$ is said to be {\it vanishing at order $n$ over the formal leaf $\iota$ at $p$} if $\iota_{n}^\ast(s)=0$ but $\iota_{n+1}^\ast(s)\neq 0$. In this case, via the exact sequence \ref{jets}, the section  $s$ canonically defines a section $j^n(s)\in T_0^n\otimes L_p$ called the {\it $n$--jet of $s$ at the formal leaf $\iota$}. In this case we will denote the integer $n$ by $ord_{\cF,p}(s)$.

If we fix an arithmetic polarization $(\overline{\cZ}, \cL)$ of the couple $(\overline{Z}, L)$ with $\overline{Z}$ normal. The point $P$ extends to a section $P:\Spec(O_K)\to \overline{\cZ}$. It is possible to fix an integral structure $\cT_0$ of $T_0$ for which the following holds:

\smallskip

-- There exists a constant $C_p$ such that, if $s\in H^0(\overline{\cZ},\cL^d)$ vanishes at the order $n$ over the formal leaf $\iota$, then
$C_p^n\cdot n!\cdot j^n(s)\in \cT_0^{\otimes n}\otimes \cL^d|_P$. 

\smallskip

The proof of this fact is the main topic of section 3 of \cite{gasbarri1}.

If we fix an hermitian structure on the $O_K$--module $\cT_0$, a direct application of Schwartz inequality gives the following:

\begin{proposition}\label{foliated liouville} We can find a constant $C$, independent on $s$ for which the following holds: Let $s\in H^0(\overline{\cZ},\cL^d)$ a section vanishing at order $n$ on the formal leaf $\iota$, then
\begin{equation}
\log\Vert j^n(s)\Vert_\sigma\geq -C(n\log(n) +d+\log^+\Vert s\Vert_{\sup}).
\end{equation}\end{proposition}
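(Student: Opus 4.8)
The plan is to convert the integrality statement $C_p^n\cdot n!\cdot j^n(s)\in\cT_0^{\otimes n}\otimes\cL^d|_P$ recalled just above into a Liouville-type lower bound, in the same way the inequality \ref{liouville1} is obtained, using Cauchy's estimate on the analytic leaves to control the jets at the auxiliary archimedean places.

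First I would set $v:=C_p^n\cdot n!\cdot j^n(s)$. Since $s$ vanishes at order exactly $n$ on the formal leaf, $j^n(s)\neq0$, so $v$ is a nonzero $O_K$-integral section of the hermitian line bundle $\overline{\cN}:=\cT_0^{\otimes n}\otimes\cL^d|_P$ over $\Spec(O_K)$. Straight from the definition of the arithmetic degree one has, for any nonzero integral section,
\begin{equation}
\widehat{\deg}(\overline{\cN})=\log\Card(\cN/vO_K)-\sum_{\tau\in M_K^\infty}\log\Vert v\Vert_\tau\geq-\sum_{\tau\in M_K^\infty}\log\Vert v\Vert_\tau,
\end{equation}
because $\Card(\cN/vO_K)\geq1$. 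Isolating the place $\sigma$ and using $\log\Vert v\Vert_\tau\leq\log^+\Vert v\Vert_\tau$ for $\tau\neq\sigma$, this yields
\begin{equation}
\log\Vert v\Vert_\sigma\geq-\widehat{\deg}(\overline{\cN})-\sum_{\tau\neq\sigma}\log^+\Vert v\Vert_\tau.
\end{equation}
Since $\widehat{\deg}(\overline{\cN})=n\,\widehat{\deg}(\overline{\cT_0})+d\,[K:\Q]\,h_\cL(P)$ and $P$, $\cL$ and the chosen metric on $\cT_0$ are independent of $s,n,d$, the quantity $\widehat{\deg}(\overline{\cN})$ is at most $C_1(n+d)$. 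Thus everything comes down to an upper bound for $\log^+\Vert v\Vert_\tau=\log^+(C_p^n\,n!\,\Vert j^n(s)\Vert_\tau)$ at each archimedean $\tau$.

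For this, fix a local holomorphic trivialisation $e$ of the pull-back of $\cL^d$ along $h_\tau$ near $0\in\Delta_1$ and write the pull-back of $s$ along the leaf as $f\cdot e$, with $f$ holomorphic on $\Delta_1$ vanishing to order $\geq n$ at $0$. Through the identification given by the exact sequence \ref{jets} and the metric on $\cT_0$, $\Vert j^n(s)\Vert_\tau$ is $(c_\tau)^n\,\Vert e(0)\Vert_\tau$ times the absolute value of the leading Taylor coefficient of $f$, for a fixed $c_\tau>0$; Cauchy's estimate on $|z|=1/2$ bounds that coefficient by $2^{\,n+1}\sup_{|z|=1/2}|f|$, and since $h_\tau(\overline{\Delta}_{1/2})$ is a compact subset of the projective variety $\overline{Z}_\tau(\C)$ we have $\sup_{|z|=1/2}|f|\leq(\min_{|z|\leq1/2}\Vert e\Vert_\tau)^{-1}\,\Vert s_\tau\Vert_{\sup}$. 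Combining these, together with $\log n!\leq n\log n$ and $\log^+\Vert s_\tau\Vert_{\sup}\leq\log^+\Vert s\Vert_{\sup}$, gives $\log^+\Vert v\Vert_\tau\leq C_2(n\log n+\log^+\Vert s\Vert_{\sup})$ with $C_2$ independent of $s,n,d$. Feeding this and the bound on $\widehat{\deg}(\overline{\cN})$ into the inequality above gives $\log\Vert v\Vert_\sigma\geq-C_3(n\log n+d+\log^+\Vert s\Vert_{\sup})$, and therefore
\begin{equation}
\log\Vert j^n(s)\Vert_\sigma=\log\Vert v\Vert_\sigma-n\log C_p-\log n!\geq-C(n\log n+d+\log^+\Vert s\Vert_{\sup}),
\end{equation}
which is the assertion (the case $n=0$ being exactly \ref{liouville1}).

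The one point that deserves care — and the reason this is worth stating as a proposition rather than leaving to the reader — is the normalisation in the identification of the abstract jet $j^n(s)$ with the leading Taylor coefficient of $f$: one must check that passing through \ref{jets} and the fixed hermitian structure on $\cT_0$ costs only a factor $(c_\tau)^n$ with $c_\tau$ bounded independently of $s,n,d$, and that $\Vert e\Vert_\tau$ stays bounded below on $\overline{\Delta}_{1/2}$. Both are immediate once the leaf $h$ and the data $(p,\cL,\cT_0)$ are fixed, because all the functions in play are continuous and non-vanishing on a fixed compact set; beyond that, the argument is just the standard ``Liouville over $\Spec(O_K)$ plus Cauchy estimates'' mechanism already underlying \ref{liouville1}.
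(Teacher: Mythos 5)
Your proof is correct and is the natural elaboration of the paper's one-line proof (``a direct application of Schwartz inequality''): combine the integrality statement with the arithmetic-degree/product-formula identity for the nonzero element $v=C_p^n\,n!\,j^n(s)$ of the rank-one lattice $\cT_0^{\otimes n}\otimes\cL^d|_P$, and control the archimedean norms $\Vert v\Vert_\tau$ at the auxiliary places by Cauchy's estimate on the analytic leaf. One small imprecision worth noting: your intermediate bound $\log^+\Vert v\Vert_\tau\le C_2(n\log n+\log^+\Vert s\Vert_{\sup})$ should also carry a $C\,d$ term, coming from $\Vert e(0)\Vert_\tau/\min_{|z|\le 1/2}\Vert e\Vert_\tau$ once one takes $e=e_0^{d}$ with $e_0$ a fixed trivialisation of $\cL$ near $p$, but the final inequality already contains a $d$-term so the conclusion is unaffected.
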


As a consequence we find that, in order to find a lower bound for the norm of the jet of section vanishing at $p$, we need to bound the order of vanishing. The order of vanishing can be bounded by  the following proposition:

\begin{proposition}\label{zero lemma}  Suppose that the (formal) leaf at $p$ is Zariski dense. We can find a constant $C_p$ for which the following holds: For every positive integer $d$ and every non vanishing global section $s\in H^0(\overline{Z}, L^d)$ we have
\begin{equation}
ord_{\cF,p}(s)\leq C_pd^{\dim(Z)}.
\end{equation}\end{proposition}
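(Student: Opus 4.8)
The plan is to deduce the bound $\ord_{\cF,p}(s)\le C_p d^{\dim Z}$ from a Bezout-type degree estimate on the Zariski closures of the successive jet loci of the leaf. First I would introduce, for each $k\ge 0$, the closed subscheme $Y_k\subset \overline{Z}$ defined as the Zariski closure of the image of the $k$-th formal neighbourhood $\iota_k:\bA^1_k\to Z$ (equivalently, the smallest closed subvariety whose formal completion at $p$ contains $\iota_k$). These form a decreasing chain $\overline{Z}=Y_{-1}\supseteq Y_0\supseteq Y_1\supseteq\cdots$, and since the leaf is Zariski dense, $\bigcap_k Y_k=\overline{Z}$ forces the chain to stabilize only at $\overline{Z}$; more precisely, one shows the dimension $\dim Y_k$ is non-increasing and, because the leaf is a non-algebraic (Zariski dense) curve, each time $Y_k$ is not all of $\overline{Z}$ the order of vanishing on $Y_k$ of a section of $L^d$ that vanishes on the leaf is controlled by the degree $\deg_L Y_k$. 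This is the classical strategy of Nesterenko's zero lemma, adapted to the foliated setting (and, as the introduction indicates, this is exactly where one invokes the Nesterenko/Binyamini machinery).

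The key quantitative input is a bound $\deg_L Y_k \le C\,(k+1)^{\codim Y_k}$, or more usefully the statement that the ``multiplicity of the leaf along $Y_k$'' grows at most linearly, combined with a Bezout inequality: if $s\in H^0(\overline Z, L^d)$ vanishes on the leaf to order $n$ but $s|_{Y_{k}}\not\equiv 0$ while $s|_{Y_{k+1}}\equiv 0$ for the relevant transition, then intersecting the divisor of $s$ with a generic complete-intersection curve through $p$ inside $Y_k$ of degree $O(d^{\dim Y_k - 1})$ produces a point of intersection multiplicity at least (order of vanishing of the leaf inside $Y_k$), and Bezout forces this to be $\le d\cdot\deg_L Y_k$. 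Summing the contributions over the finitely many ``jumps'' of the dimension of $Y_k$ — there are at most $\dim Z$ such jumps — gives a telescoping bound of the shape $\ord_{\cF,p}(s)\le \sum_{j} d\cdot\deg_L Y_{k_j}\le C_p\, d^{\dim Z}$, where the top term $d^{\dim Z}$ comes from the first step where $Y_k=\overline Z$ has degree $\deg_L\overline Z$ and the curve cut out has degree $O(d^{\dim Z -1})$.

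Concretely the steps are: (1) set up the jet loci $Y_k$ and record that $\ord_{\cF,p}(s)$ is finite because $s\not\equiv 0$ and the leaf is Zariski dense; (2) prove the dimension $\dim Y_k$ is non-increasing in $k$ and drops at most $\dim Z$ times; (3) for a fixed $k$ with $\dim Y_k = m$, construct inside $Y_k$ a curve $\Gamma$ through $p$, smooth at $p$ and tangent to the foliation to as high an order as the leaf permits, with $\deg_L\Gamma \le C\, d^{\,m-1}$ (this uses that $Y_k$ has bounded degree $\le C(k+1)^{N-m}$ and a general-position/linear-algebra argument to find such a $\Gamma$ in a linear system of controlled degree); (4) apply Bezout to $\div(s|_{Y_k})\cdot\Gamma$ to bound the order to which the leaf can stay inside $Y_k$ before $s$ is forced to vanish on all of $Y_k$, getting a bound $\le d\cdot\deg_L\Gamma$; (5) telescope over the at most $N=\dim Z$ dimension drops, absorbing all constants into $C_p$.

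The main obstacle is step (3): producing, at each level, an auxiliary algebraic curve inside $Y_k$ passing through $p$ with the right tangency order to the leaf \emph{and} with $L$-degree only $O(d^{m-1})$, uniformly in $k$ and $d$. This requires a careful degree bound on the subvarieties $Y_k$ themselves (that $\deg_L Y_k$ grows polynomially in $k$, not exponentially) together with the fact that a complete intersection of general hyperplane sections of appropriate degree inside $Y_k$ can be arranged to be smooth at $p$ and to meet the leaf with controlled multiplicity — this is precisely the content of the Nesterenko-type zero lemma for foliations, and at the singular locus of $\cF$ it is where the hypotheses fail in general, which is why the statement is restricted to leaves through points where the construction of section 3 of \cite{gasbarri1} applies. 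Once this geometric input is in hand, the Bezout bookkeeping in steps (4)–(5) is routine.
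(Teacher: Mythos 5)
Your proposal and the paper's proof diverge fundamentally in strategy. The paper does \emph{not} attempt to prove the zero estimate from scratch: it simply cites the zero lemma of Nesterenko and Binyamini (stated as Proposition \ref{nesterenko1}) for the case of a foliation of affine space $\bA^N$ by a polynomial vector field $D=\sum P_i\,\partial/\partial x_i$, and then reduces the general case to this one with a single remark — that one may assume the line bundle $H^{-1}$ is very ample, so the foliation comes from the restriction of a (polynomial) foliation on $\bP^N$. That reduction step, which is the entire content of the paper's argument, is absent from your write-up. What you propose instead is an outline of the \emph{internal} proof of the Nesterenko zero lemma itself (jet loci, dimension drops, Bezout, telescoping). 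That is a legitimate thing to try, but it turns a one-paragraph reduction into a reconstruction of a substantial theorem, and you yourself concede the circularity: your step (3), which you identify as the main obstacle, is, in your own words, ``precisely the content of the Nesterenko-type zero lemma for foliations.'' So as written the argument does not close; it postpones the hard part to the same references the paper cites, but after much more work.

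There are also concrete technical problems with the sketch. Your $Y_k$, defined as the Zariski closure of the image of the $k$-th infinitesimal neighbourhood $\iota_k:\bA^1_k\to Z$, form an \emph{increasing} chain $\{p\}=Y_0\subseteq Y_1\subseteq\cdots\subseteq\overline Z$, not a decreasing one as you assert; the statement ``$\bigcap_k Y_k=\overline Z$'' makes no sense in your setup (one wants that the chain eventually reaches $\overline Z$, which follows from Zariski density of the formal leaf). The subsequent Bezout bookkeeping is keyed to the wrong monotonicity, and the quantitative input you need — $\deg_L Y_k$ bounded polynomially in $k$, and the existence inside each $Y_k$ of an auxiliary curve through $p$ of degree $O(d^{\dim Y_k-1})$ with controlled tangency to the leaf — is exactly the non-trivial heart of Nesterenko's and Binyamini's arguments, and is not supplied. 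If your goal is a self-contained proof, these degree and tangency bounds must be established; if your goal is to match the paper, the efficient move is the one the paper makes: replace $H$ by a sufficiently negative twist so that $H^{-1}$ is very ample, embed $Z$ equivariantly into a projective space so that the foliation extends to a polynomial vector field, and invoke Proposition \ref{nesterenko1} directly with $\ell=\dim Z$ (which equals the dimension of the Zariski closure of the leaf, by the Zariski-density hypothesis).
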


The proof of the proposition leans on the following theorem, which, a priori, is just the case when $Z$ is the projective space:

\begin{proposition} \label{nesterenko1}Let $D:=\sum_{i=1}^NP_i(x_1,\dots, x_N){{\partial}\over{\partial x_i}}$ be a differential operator on the affine space $\bA^N$ with $P_i(x_1,\dots, x_N)\in \C[x_1,\dots, x_N]$. Suppose that the variety $V(P_1,\dots, P_N)$ do not contain the origin. Let $V_{\cF, 0}$ be the formal leaf through the origin of the foliation defined by $D$. Let $\ell$ be the Zariski closure of $V_{\cF,0}$. Then we can find a constant $C$ such that, for every polynomial $Q(x_1,\dots, x_N)\in \C[x_1,\dots, x_N]$ of degree less or equal than $d$ not vanishing identically on $V_{\cF, 0}$, we have
\begin{equation}
ord_{\cF,0}(P)\leq Cd^\ell.
\end{equation}\end{proposition}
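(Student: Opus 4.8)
This is a form of Nesterenko's multiplicity estimate --- the sharp version, in which the exponent is the dimension $\ell$ of the Zariski closure of the orbit rather than the ambient dimension $N$ --- together with the extensions due to Binyamini; so the plan is partly to reduce to their theorems and partly to recall the shape of the argument.

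\emph{Setting up the geometry.} Let $W\subseteq\bA^N$ be the Zariski closure of the formal leaf $V_{\cF,0}$, an irreducible variety of dimension $\ell$. Since $D$ is an algebraic vector field whose formal orbit through the origin is dense in $W$, a polynomial $f$ with $\iota^\ast f=0$ also satisfies $\iota^\ast(Df)=0$, so the ideal $I(W)$ is stable under $D$ and $D$ descends to a derivation of the coordinate ring $A=\C[x_1,\dots,x_N]/I(W)$: the variety $W$ is $D$-invariant. The hypothesis that $V(P_1,\dots,P_N)$ avoids the origin says exactly that $D$ does not vanish at $0$, so by the Frobenius theorem the leaf is a genuine smooth analytic arc through $0$, and, after normalising the parameter, restriction to the formal leaf $\iota^\ast$ intertwines $D$ with $\frac{d}{dt}$. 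Writing $Q$ also for its image in $A$ --- nonzero, since by hypothesis $Q$ does not vanish on the leaf, i.e. $Q\notin I(W)$ --- and $m=\ord_{\cF,0}(Q)=\ord_t\iota^\ast(Q)$, one records the two elementary facts on which everything rests: $\deg(D^jQ)\le d+j(\delta-1)$, where $\delta=\max_i\deg P_i$, and $\ord_{\cF,0}(D^jQ)=m-j$ for $0\le j\le m$; in particular the origin lies on $V(Q,DQ,\dots,D^{m-1}Q)$ but not on $V(D^mQ)$.

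\emph{The inductive estimate.} Nesterenko's bound for $D$-invariant varieties I would prove by induction on $\ell=\dim W$. For $\ell=1$, $W$ is an irreducible curve and B\'ezout gives at once $m\le\ord_0(Q|_W)\le\deg(Q)\deg(W)=d\cdot\deg W$, with $\deg W$ depending only on $D$. For $\ell>1$ one studies the decreasing chain of subvarieties of $W$,
$$ W\ \supseteq\ V(Q)\cap W\ \supseteq\ V(Q,DQ)\cap W\ \supseteq\ \cdots\ \supseteq\ V(Q,DQ,\dots,D^{m-1}Q)\cap W\ \ni\ 0 , $$
whose dimension is non-increasing in the number of equations and remains $\ge 0$ to the last term, the origin being removed only at the following step. $D$-equivariance forces that, over any range of steps on which the dimension does not drop, the relevant irreducible component $C$ through $0$ is fixed and $Q$ together with a long initial segment of its $D$-derivatives vanishes on $C$; flowing $C$ along $D$ then exhibits $Q$ as vanishing to a correspondingly high order along a proper $D$-invariant subvariety $W'\subsetneq W$ with $\dim W'<\ell$. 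Feeding $(W',D^jQ)$, with $j$ comparable to $m$, into the inductive hypothesis bounds the residual order $m-j$ by a constant times $(d+m\delta)^{\ell-1}$; summing over the at most $\ell$ dimension levels and absorbing the parasitic factor $m\delta$ --- e.g. by carrying the estimate in the a~priori form $m\le\frac{1}{2}m+Cd^\ell$, or by running the argument throughout with the corrected degrees $d+j(\delta-1)$ --- gives $m\le Cd^\ell$.

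\emph{Where the difficulty is.} The delicate point is exactly the B\'ezout-type control, uniform in $k$ and in $Q$, of the intermediate-dimensional auxiliary varieties $V(Q,DQ,\dots,D^kQ)\cap W$ and of the subvariety $W'$ produced from them: a crude B\'ezout bound yields the ambient exponent $N$ rather than $\ell=\dim W$, and recovering the sharp exponent requires the dimension-sensitive form of B\'ezout together with careful bookkeeping of how the irreducible components of the chain are created and destroyed under repeated differentiation. This is the technical heart of Nesterenko's lemma and of Binyamini's generalisation to the class of points of $\cF$ that we allow, and it is the step I would invoke from the literature rather than reprove.
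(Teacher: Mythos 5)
The paper does not prove Proposition~\ref{nesterenko1} at all: immediately after the statement it says ``The proof of the Proposition above can be found in \cite{Nesterenko} or \cite{Byniamini}'' and moves on. Your proposal, by contrast, attempts to recall the shape of Nesterenko's argument (a $D$-invariant Zariski closure $W$, the chain $V(Q,DQ,\dots,D^kQ)\cap W$, identification of a proper $D$-invariant $W'$ when the dimension of the chain stalls, and induction on $\dim W$), and then --- quite honestly --- defers the technical heart, the dimension-sensitive B\'ezout bookkeeping, to the literature. In that sense you are ultimately doing what the paper does, namely citing Nesterenko and Binyamini, but you supply a scaffolding the paper omits. That scaffolding is in the right spirit, and the preliminary observations (stability of $I(W)$ under $D$, $\ord_{\cF,0}(D^jQ)=m-j$, $\deg D^jQ\le d+j(\delta-1)$) are correct.

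The one place where your sketch, taken literally, would not go through is the absorption step. You propose to bound the residual order $m-j$ by $C'(d+m\delta)^{\ell-1}$ via the inductive hypothesis on $W'$ and then absorb the $m$-dependence by writing $m\le\frac{1}{2}m+Cd^\ell$. But $(d+m\delta)^{\ell-1}$ has leading term $(m\delta)^{\ell-1}$, which for $\ell\ge 3$ is superlinear in $m$, so the inequality you would actually obtain is of the form $m\le j+C'(m\delta)^{\ell-1}$, and no a~priori halving trick closes this without first establishing that $j$ itself (equivalently the length of each dimension level) is polynomially bounded in $d$ uniformly, which is precisely the dimension-sensitive B\'ezout input. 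So the ``absorption'' is not a rearrangement you can carry out before the hard step; it is the hard step. Your second suggested route --- running the whole induction with the corrected degrees $d+j(\delta-1)$ and bounding the length of each level directly by a degree estimate --- is the one the actual proofs take, but as written it is a gesture rather than an argument. Since you flag this explicitly and invoke \cite{Nesterenko}, \cite{Byniamini} for it, the proposal is not wrong, but the reader should not come away thinking the fixed-point inequality $m\le\frac12 m+Cd^\ell$ is available cheaply.

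Two small notational points worth flagging, inherited from the paper's statement: ``$\ell$ be the Zariski closure'' should read ``$\ell$ be the dimension of the Zariski closure,'' and the conclusion should read $\ord_{\cF,0}(Q)\le Cd^\ell$ (the $P$ there is a typo). You correctly interpret both.
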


The proof of the Proposition above can be found in \cite{Nesterenko} or \cite{Byniamini}. 

In order to deduce Proposition \ref{zero lemma} from Proposition \ref{nesterenko1} it suffices to remark that we can suppose that the involved line bundle $H^{-1}$ of the foliation is very ample and consequently the foliation comes from the restriction of a foliation on $\P^N$. 

We can deduce  from the proposition above the fact that the number of rational points of height less or equal than $T$ on a disk which is the analytic leaf of a foliation, grows polynomially with $T$:

\begin{theorem}\label{foliations} Let $\cF$ be a foliation on a smooth quasi projective variety $Z$ defined over a number field $K$. Let $p\in Z(K)$ be a rational point and $h:\Delta_1\to Z_\sigma(\C)$ be the analytic leaf  of $\cF$ through $p$. Suppose that the dimension of the Zariski closure of the leaf is $\ell>1$. Let $0<r<1$ be a real number, then, for every $\epsilon>0$ we have
\begin{equation}
A_{\Delta_r}(T)\ll_\epsilon T^{2\ell+\epsilon}.
\end{equation}\end{theorem}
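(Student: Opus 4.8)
The plan is to show that a relatively compact disk $\Delta_r$ on the analytic leaf $h$ is a subset of type $S_a$ with respect to $h$ (in the sense of Definition \ref{sets of type S}) with $a=\ell+\varepsilon'$ for any $\varepsilon'>0$, and then invoke Theorem \ref{poly T} to get $A_{\Delta_r}(T)\ll T^{2(\ell+\varepsilon')}$, which gives the claimed bound after renaming $\varepsilon=2\varepsilon'$. Thus everything reduces to a lower bound of the form $\log\Vert h^\ast(s)\Vert_{\Delta_r}\geq -C(\log^+\Vert s\Vert+d)^{\ell+\varepsilon'}$ for all $d$ and all nonzero $s\in H^0(\overline{\cZ},\cL^d)$, where here $\Vert\cdot\Vert_{\Delta_r}$ means the sup over $\Delta_r$ of the pulled-back norm.

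The key steps, in order. First, since $h(\Delta_1)$ is Zariski dense (the Zariski closure of the leaf has dimension $\ell>1$, and one works with the induced polarization there), given a nonzero $s$ the pullback $h^\ast(s)$ is not identically zero; let $n=\mathrm{ord}_{\cF,p}(s)$ be its vanishing order at $0$, which by Proposition \ref{zero lemma} satisfies $n\leq C_p d^{\ell}$ (applied on the Zariski closure of the leaf, whose dimension is $\ell$). Second, apply the Nevanlinna First Main Theorem on $\Delta_1$ with base point $0$ to the section $h^\ast(s)$ of $h^\ast(\cL^d)$: the characteristic term is bounded by $C d$ (the first Chern class of $h^\ast(\cL)$ has bounded mass on a fixed subdisk, as in Lemma \ref{uniform T}), the boundary integral of $\log\Vert h^\ast(s)\Vert$ is at most $\log^+\Vert s\Vert_{\sup}+Cd$, and the right-hand side contains $\log\Vert j^n(h^\ast(s))_0\Vert + n\cdot C$ plus a nonnegative sum over the other zeros. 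Rearranging, $\log\Vert j^n(s)\Vert_\sigma$ is bounded below by $-C(d+\log^+\Vert s\Vert)$ plus the negative of a sup-over-$\Delta_r$ quantity — but more directly, combining the First Main Theorem estimate with the arithmetic jet estimate of Proposition \ref{foliated liouville}, which gives $\log\Vert j^n(s)\Vert_\sigma\geq -C(n\log n+d+\log^+\Vert s\Vert_{\sup})$, one deduces a lower bound on $\sup_{\Delta_r}\log\Vert h^\ast(s)\Vert$. Plugging $n\leq C_pd^\ell$ into $n\log n$ produces a term of size $d^\ell\log d\leq C d^{\ell+\varepsilon'}$, and of course $d+\log^+\Vert s\Vert\ll(\log^+\Vert s\Vert+d)^{\ell+\varepsilon'}$, so altogether $\log\Vert h^\ast(s)\Vert_{\Delta_r}\geq -C(\log^+\Vert s\Vert+d)^{\ell+\varepsilon'}$. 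That is exactly the condition for $\overline{\Delta_r}$ to be of type $S_{\ell+\varepsilon'}$.

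Third and last, I would note one subtlety in the comparison of norms: Proposition \ref{foliated liouville} bounds the norm of the \emph{jet} $j^n(s)$ at $p$, whereas type $S_a$ is about $\Vert h^\ast(s)\Vert_{\Delta_r}$ on a subdisk. The bridge is precisely the Nevanlinna First Main Theorem in the form stated in the excerpt (the version with $\log\Vert j^n(s)_z\Vert+n\cdot C$): it converts the lower bound on the jet at the center into a lower bound on the sup of $\log\Vert h^\ast(s)\Vert$ over $\partial V$ for a slightly larger disk $V$, and then the mean-value / Harnack estimates relating the Green function of $V$ to distances (as in the proof of Theorem \ref{norms and zeros 0}) push this down to a lower bound on $\Vert h^\ast(s)\Vert_{\Delta_r}$, losing only a $Cd$ from the characteristic and a harmless additive constant from the geometry of $\Delta_r\subset V\subset\Delta_1$. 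Once the type $S_{\ell+\varepsilon'}$ property is established, Theorem \ref{poly T} finishes the proof.

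The main obstacle I expect is bookkeeping the dependence of constants and making sure the $n\log n$ term is controlled uniformly: one must check that the constant $C_p$ in the Zero Lemma (Proposition \ref{zero lemma}) and the constant in Proposition \ref{foliated liouville} are genuinely independent of $d$ and $s$, and that passing from the jet at the center to the sup-norm on $\Delta_r$ via the First Main Theorem does not secretly reintroduce a dependence on $s$ through the location of the other zeros — but those zeros only contribute nonnegatively to the side we are bounding below, so they can be discarded. Modulo that, the estimate $n\log n\leq C_p d^\ell\cdot\ell\log(C_pd)\leq C d^{\ell+\varepsilon'}$ for $d$ large is the crux, and the rest is assembling the inequalities already available in the paper.
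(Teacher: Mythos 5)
Your plan and ingredients match the paper's proof exactly: reduce to showing a compact set of type $S_{\ell+\varepsilon'}$ via Theorem \ref{poly T}, replace $Z$ by the Zariski closure of the leaf so that $h$ is Zariski dense and Proposition \ref{zero lemma} applies with exponent $\ell$, bound the jet norm below by Proposition \ref{foliated liouville}, absorb $n\log n \le C_p d^{\ell}\log(C_p d^{\ell}) \le C d^{\ell+\varepsilon'}$, and bridge from the jet at $p$ to a sup-norm over a compact set by the jet version of the First Main Theorem, discarding the nonnegative sum over other zeros. That is precisely the paper's argument.

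However, the bridge step as you wrote it is oriented the wrong way, and as stated it would not close. You propose applying the First Main Theorem with base point $0$ on $\Delta_1$ itself, or on a disk $V$ with $\Delta_r \subset V \subset \Delta_1$, obtaining a lower bound for $\sup_{\partial V}\log\Vert h^\ast(s)\Vert$, and then ``pushing this down'' to a lower bound on $\Vert h^\ast(s)\Vert_{\Delta_r}$. Neither version works: $\Delta_1$ is not relatively compact in the domain so the boundary integral does not make sense, and if $V\supsetneq\Delta_r$ then $\partial V$ lies outside $\overline{\Delta_r}$ and a lower bound for $\sup_{\partial V}\log\Vert h^\ast(s)\Vert$ says nothing about the norm on $\Delta_r$; there is no Harnack or mean-value device that transfers a lower bound on a sup from a larger boundary circle to an inner disk, since $s$ could be tiny throughout $\Delta_r$ while large on $\partial V$. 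The correct choice, which the paper makes, is the opposite inclusion: apply the First Main Theorem on a strictly smaller disk $\Delta_{r_0}$ with $0<r_0<r$, base point $0$. Then the boundary measure $\mu_{\Delta_{r_0},0}$ is a probability measure on $\partial\Delta_{r_0}\subset\Delta_r$, so the boundary integral is an average; it is bounded below by $\log\Vert j^n(s)\Vert_\sigma - C d - C'n$, hence there is $w_0\in\partial\Delta_{r_0}\subset\Delta_r$ with $\log\Vert h^\ast(s)\Vert(w_0)\ge \log\Vert j^n(s)\Vert_\sigma - Cd - C'n$. Since $\partial\Delta_{r_0}$ (equivalently $\overline{\Delta_{r_0}}$) is a compact subset of $\Delta_r$, this is exactly the type $S_{\ell+\varepsilon'}$ inequality for the set $B=\overline{\Delta_{r_0}}\subset U=\Delta_r$, and Theorem \ref{poly T} applies. (Relatedly, your opening sentence wants to take $B=\overline{\Delta_r}$, which is not a subset of $U=\Delta_r$; using $\overline{\Delta_{r_0}}$ fixes this too.) With that one reorientation your argument is the paper's.
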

\begin{proof} By Theorem \ref{poly T}, it suffices to prove that such a $\Delta_r$ contains a compact subset of type $S_{\ell +\epsilon}$. 

Replacing $Z$ by the Zariski closure of the leaf and taking a resolution of singularities, we may suppose that the leaf is Zariski dense in $Z$. 

Choose a small disk $\Delta_{r_0}\subset \Delta_r$ (with $r_0<r$). Let $d>0$ be a positive integer and $s\in H^0(\overline{\cZ}, \cL^d)\setminus\{ 0\}$.

Let $n=ord_{\cF, p}(s)$. By Nevanlinna First Main Theorem applied to the disk $\Delta_{r_0}$ and the line bundle $\cL$ we may find a constant $A$ for which
\begin{equation}
A\cdot d +\int_0^{2\pi} \log\Vert s\Vert_\sigma(r_0\cdot\exp(2\pi i \theta){{d\theta}\over{2\pi}}\geq \log\Vert j^n(s)\Vert_\sigma.
\end{equation}
 
Thus there is a point $w_0\in \partial \Delta_{r_0}$ such that
\begin{equation}
\log\Vert s\Vert(w_0)\geq \log\Vert j^n(s)\Vert_\sigma-A\cdot d.
\end{equation}
The conclusion follows from Proposition \ref{zero lemma} and Proposition \ref{foliated liouville}.\end{proof}

Particular cases of Theorem \ref{foliations} have been proved in \cite{Byniamini0} and \cite{comte yomdin}.

%The formal completion $\widehat{Z}_p$  of $Z$ at $p$ is isomorphic to $\widehat{\bA}^N_0:=Spf(K[\![z_1\dots, z_N]\!])$. if we fix a normal model $\cZ$ of $Z$ over $\Spec(O_K)$ where the rational point $p$ extends to a section $P:\Spec(O_K)\to \cZ$, we can fix an isomorphism $j:\widehat{Z}_p\to Spf(K[\![z_1\dots, z_N]\!])$ in such a way that the following holds:

%-- The formal leaf of $\cF$ is given by formal power series $(Z_1(X).\cdots, Z_N(T))$ 

\

\section{Riemann surfaces containing points of type $S_a$}

\

We saw in the previous section that if a Zariski dense Riemann surface in a projective variety contains a point of type $S$ then it must contain "few" rational points. We also remarked that, in order to contain few rational points,  it suffices that it contains a subset of type $S$. 

In this section we will show that if a Riemann surface contains a subset of type $S$ which is "sufficiently small", then the set of points of type $S$ contained in it is full for the Lebesgue measure. 

As a corollary, as in the case of compact Riemann Surfaces, (cf. Theorem \ref{maingasbarri2} (c)),  we find that the fact that the intersection of the Riemann surface with points of type $S$ is non empty is equivalent with the fact that the points of type $S$ in it form a full set. 

We fix a Riemann surface $M$ and an holomorphic map $\varphi :M\to X_\sigma(\C)$ with Zariski dense image. Essentially, in this  section we will prove the  following general principle: 

\smallskip

Either the image of $M$ via $\varphi$ do not intersect the set of points  of type $S$ of $X_K$; in this case it {\it might} contain many rational points, or it does intersect the set of points of type $S$; in this case, essentially every point of the image is of type $S$ and there are very few rational points contained  in it. 

\smallskip

A  non compact Riemann surface $M$ may be equipped with a canonical  metric  of constant curvature (up to a scalar factor). Let $\alpha:\tilde{M}\to M$ be the universal covering of $M$. The Riemann surface $\tilde{M}$ is either $\C$ or the disk $\Delta_1$. 

The main theorem of this section is the following:

\begin{theorem}\label{S sets} Suppose that $\varphi:M\to X_\sigma(\C)$ is a holomorphic map as above. Let $B\subset M$ be a compact subset of type $S$ with respect to $\varphi$. Then

-- If $\tilde{M}=\C$, the set $\varphi^{-1}(S(X_K))$ is full in $M$.

-- If $\tilde{M}=\Delta_1$, then there exists a real number $\delta>0$ such that, if the diameter of $B$ is less  than $\delta$, the set $\varphi^{-1}(S(X_K))$ is full in $M$.\end{theorem}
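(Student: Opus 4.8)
The strategy is a Borel--Cantelli argument run on the universal cover of $M$, exactly in the spirit of the proof of Theorem \ref{maingasbarri2}(b)--(c) in \cite{gasbarri2}; the only new ingredient is that the pointwise lower bound at a single $S$--point used there is replaced by the uniform lower bound on $B$ supplied by Definition \ref{sets of type S}. Throughout, put $n:=\dim X_K$, fix a relatively compact open set $U\supset B$, and recall that fullness in $M$ does not depend on the chosen metric, so we may work with the metric of constant curvature. Let $\alpha:\tilde M\to M$ be the universal covering, so that $\tilde M=\C$ (hence $M=\C$ or $M=\C^\ast$) or $\tilde M=\Delta_1$, and set $\tilde\varphi:=\varphi\circ\alpha$. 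Since $\varphi$ has Zariski dense image, $\tilde\varphi^\ast(s)\not\equiv 0$ for every $0\ne s\in H^0(\cX,\cL^d)$, and, crucially, $\Vert\tilde\varphi^\ast(s)\Vert(w)=\Vert s\Vert(\varphi(\alpha(w)))\le e^{\log^+\Vert s\Vert}$ at \emph{every} $w\in\tilde M$. The first thing I would do is produce a compact ``lift'' $B'\subset\tilde M$ of $B$ with $\Vert\tilde\varphi^\ast(s)\Vert_{B'}=\Vert\varphi^\ast(s)\Vert_B$ for all $s$: when $\tilde M=\C$ such a $B'$ always exists (for $M=\C$ trivially, and for $M=\C^\ast$ one takes a compact fundamental lift of $B$ inside a period strip of $\exp:\C\to\C^\ast$), whereas when $\tilde M=\Delta_1$ it exists as soon as $B$ lies in an evenly covered metric ball, and this is guaranteed by $\mathrm{diam}(B)<\delta$ with $\delta$ a lower bound for the (continuous, positive) injectivity radius on the compact set $\overline U$. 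This is the one and only place where the hypothesis on $\mathrm{diam}(B)$ enters.

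The technical heart is an \emph{area estimate} for the sublevel sets of the norm of a section on a disk, proved by combining the Nevanlinna First Main Theorem with the classical Bloch--Cartan inequality --- this is what subsection \ref{section small sections} is for. In the form I would use it: let $\Delta_R\subset\tilde M$ be a coordinate disk with $B'\subset\Delta_R$ and $\overline{\Delta_R}$ sitting inside a slightly larger disk of $\tilde M$; let $s\in H^0(\cX,\cL^d)\setminus\{0\}$ with $\Vert\tilde\varphi^\ast(s)\Vert_{\sup,\Delta_R}\le e^{h}$ and $\Vert\tilde\varphi^\ast(s)\Vert_{B'}\ge e^{-\nu}$. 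Then, applying the First Main Theorem to $\tilde\varphi^\ast(s)$ with base point a maximum point of $\Vert\tilde\varphi^\ast(s)\Vert$ on $B'$ and bounding the Nevanlinna characteristic uniformly as in Lemma \ref{uniform T}, the number of zeros of $\tilde\varphi^\ast(s)$ in any fixed compact subdisk of $\Delta_R$ is $\le C(h+\nu+d)$; and then, factoring those zeros out of $\tilde\varphi^\ast(s)$, applying Bloch--Cartan to the remaining nowhere vanishing factor and passing from the coordinate area to $\mu$ by bounded distortion of the chart, for every $\eta>0$
\begin{equation}
\mu\bigl(\{\,w\in\Delta_R\;:\;\Vert\tilde\varphi^\ast(s)\Vert(w)<\eta\,\}\bigr)\ \le\ C_R\,\exp\!\Bigl(\frac{c_R\,(\nu+\log\eta)}{h+\nu+d}\Bigr),
\end{equation}
the constants depending on $R$, $B'$, $L$, $\varphi$ but not on $d,s,\eta$. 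I expect this estimate to be the main obstacle: one really needs the exponential decay rate to deteriorate only like the reciprocal of the zero count, and the zero count to be controlled uniformly by the elementary quantity $h+\nu+d$.

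Granting the area estimate, the rest is bookkeeping. Let $A_0$ be the constant attached to $B$ by Definition \ref{sets of type S}, so $\log\Vert\varphi^\ast(s)\Vert_B\ge-A_0(\log^+\Vert s\Vert+d)^{a}$; since $d\ge1$ and $a\ge n+1\ge1$ we have $h,d\le(\log^+\Vert s\Vert+d)^{a}$, so in the area estimate (applied with $h=\log^+\Vert s\Vert$ and $\nu=A_0(\log^+\Vert s\Vert+d)^{a}$) we get $h+\nu+d\le C_1(\log^+\Vert s\Vert+d)^{a}$. Fix a large constant $B_0$ and set
\begin{equation}
E_{d,s}:=\bigl\{\,w\in\Delta_R\;:\;\log\Vert\tilde\varphi^\ast(s)\Vert(w)<-B_0(\log^+\Vert s\Vert+d)^{a+n+2}\,\bigr\}.
\end{equation}
Taking $\eta=e^{-B_0(\log^+\Vert s\Vert+d)^{a+n+2}}$ in the area estimate gives, for $B_0$ large enough, $\mu(E_{d,s})\le C_R\exp\bigl(-c'B_0(\log^+\Vert s\Vert+d)^{n+2}\bigr)$. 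By the counting estimate \eqref{eq:counting1} the number of $s\in H^0(\cX,\cL^d)$ with $\log^+\Vert s\Vert\le m$ is at most $\exp(C_2 d^{n+1}+md^{n})$; splitting the degree--$d$ sections into the shells $m\le\log^+\Vert s\Vert\le m+1$, $m\in\bN$, we obtain
\begin{equation}
\sum_{d\ge1}\ \sum_{0\ne s\in H^0(\cX,\cL^d)}\mu(E_{d,s})\ \le\ C_R\sum_{d\ge1}\sum_{m\ge0}\exp\bigl(C_2 d^{n+1}+(m+1)d^{n}-c'B_0(m+d)^{n+2}\bigr),
\end{equation}
which converges once $B_0$ is chosen large, since the negative term has degree $n+2$ in $(m+d)$ while the positive ones have degree $\le n+1$. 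Borel--Cantelli (Proposition \ref{borelcantelli}) then gives that, outside a $\mu$--null subset of $\Delta_R$, every $w$ lies in only finitely many $E_{d,s}$; discarding in addition the $\mu$--null set $\bigcup_{d,s}\{w:\tilde\varphi^\ast(s)(w)=0\}$ (a countable union of discrete sets), we find that for $\mu$--a.e.\ $w\in\Delta_R$ there is a finite set $F(w)$ of pairs outside which $\log\Vert\tilde\varphi^\ast(s)\Vert(w)\ge-B_0(\log^+\Vert s\Vert+d)^{a+n+2}$, while for $(d,s)\in F(w)$ the number $\log\Vert\tilde\varphi^\ast(s)\Vert(w)$ is finite; enlarging $B_0$ to the $w$--dependent constant $A(w)$ that also dominates the finitely many ratios $-\log\Vert\tilde\varphi^\ast(s)\Vert(w)/(\log^+\Vert s\Vert+d)^{a+n+2}$ for $(d,s)\in F(w)$ shows that $\varphi(\alpha(w))\in S_{a+n+2}(X_K)\subset S(X_K)$. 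Thus $\tilde\varphi^{-1}(S(X_K))=\alpha^{-1}(\varphi^{-1}(S(X_K)))$ is full in $\Delta_R$. Finally, letting $R$ grow so that the concentric disks $\Delta_R$ (all containing the fixed compact $B'$) exhaust $\tilde M$, and using that $\alpha$ is a local biholomorphism and hence carries and reflects $\mu$--null sets, we conclude that $\varphi^{-1}(S(X_K))$ is full in $M$. The whole argument rests on the area estimate of the second paragraph, which is where the real work --- and the use of Bloch--Cartan --- lies.
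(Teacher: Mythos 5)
Your proposal is fundamentally sound and rests on the same technical pillars as the paper's proof: a Bloch--Cartan / Nevanlinna area estimate for the set where $\|\tilde\varphi^\ast(s)\|$ is small, the counting estimate \eqref{eq:counting1} for sections of bounded norm, and Borel--Cantelli. The bookkeeping in your last paragraph (the choice of exponent $a+n+2$, the convergence of the double sum, the passage from a.e.\ finiteness of exceptional pairs to membership in $S_{a+n+2}(X_K)$, and discarding the countable union of zero sets) all works, and in fact the exponent $a+n+2$ is cleaner than the paper's $\sup\{a,N+1\}$, which forces $a\ge N+2$ for the stated decay to be correct.

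Where you diverge, and where there is a genuine gap, is in how you pass from the local estimate to fullness on all of $M$. The paper never proves an area estimate of the kind you assert on the whole disk $\Delta_R$ with $R\to 1$: Theorem~\ref{main estimate} is only for \emph{small} open sets $U$ (in the hyperbolic case this means hyperbolic diameter $<\ln 2$, which after a disk automorphism puts the relevant lift inside $\Delta_{1/3}$, precisely the regime where Lemma~\ref{holomorphic estimate 2} applies). The paper then establishes fullness on one small neighborhood $U_0$ of $B$ and propagates it over a countable cover of $M$ by small open sets, re-applying the Claim at each step by picking a fresh $S$--point in each overlap. Your exhaustion by disks $\Delta_R\subset\Delta_1$ instead needs a sublevel--area estimate on hyperbolic disks of arbitrarily large radius with a base set $B'$ fixed deep inside, and that is strictly more than the paper shows. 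It is plausible --- one would cover $\Delta_R$ by finitely many small disks and carry the lower bound from $B'$ to each center by Harnack/conformal-automorphism arguments, with constants depending on $R$ --- but as it stands this is precisely the ``main obstacle'' you flag, and you should either prove this strengthened estimate or replace the exhaustion by the paper's chaining argument, which only needs the small-set version.

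A smaller point: your assertion that the diameter hypothesis on $B$ ``is the one and only place'' where smallness enters is not quite right. A compact $B'\subset\tilde M$ with $\alpha(B')=B$ exists for \emph{every} compact $B\subset M$ (cover $B$ by finitely many evenly covered sets and choose one sheet over each), so the lift does not actually require $\mathrm{diam}(B)<\delta$. In the paper, the $\delta=\ln 2$ threshold is tied to the area estimate itself: the small open set $U_0\supset B$ must lift into a hyperbolic ball of diameter $<\ln 2$, i.e.\ into $\Delta_{1/3}$, so that Lemma~\ref{holomorphic estimate 2} (which requires $r<1/3$) applies. If you do adopt the chaining strategy this is where the smallness of $B$ really earns its keep.
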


The number $\delta$ can be taken to be $\ln(2)$.

As a corollary we find: 

\begin{theorem}\label{intersecting S points} With the notations as above, then $\varphi^{-1}(S(X_K))\neq \emptyset$ if and only if $\varphi^{-1}(S(X_K))$ is full in $M$ for the Lebesgue measure.
\end{theorem}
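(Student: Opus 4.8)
The plan is to derive Theorem \ref{intersecting S points} as a purely formal consequence of Theorem \ref{S sets}; no new analytic input is needed. The implication ``$\varphi^{-1}(S(X_K))$ full $\Rightarrow\varphi^{-1}(S(X_K))\neq\emptyset$'' is immediate: a full subset of $M$ has complement of Lebesgue measure zero, and $M$, being a non-empty (real two-dimensional) Riemann surface, has positive measure in every chart; hence the subset has positive measure and in particular cannot be empty.

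For the converse, suppose there is a point $z_0\in M$ with $\varphi(z_0)\in S(X_K)$, say $\varphi(z_0)\in S_a(X_K)$ for some $a\geq 0$. First I would observe that the singleton $B:=\{z_0\}$ is a compact subset of $M$ of type $S_a$ with respect to $\varphi$ in the sense of Definition \ref{sets of type S}. Indeed, for every positive integer $d$ and every $s\in H^0(\cX,\cL^d)\setminus\{0\}$ one has $\Vert s\Vert_B=\Vert\varphi^\ast(s)\Vert(z_0)=\Vert s_{\sigma_0}\Vert_{\sigma_0}(\varphi(z_0))$, so the defining inequality of Definition \ref{sets of type S} for $B$ is literally the defining inequality of Definition \ref{points of type S 1} for the point $\varphi(z_0)\in S_a(X_K)$, with the same constant $A$.

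Now I would apply Theorem \ref{S sets} to this $B$. The diameter of $B=\{z_0\}$ equals $0$, which is trivially smaller than the threshold $\delta=\ln(2)$ appearing in the case $\tilde{M}=\Delta_1$; and in the case $\tilde{M}=\C$ there is no hypothesis on the diameter at all. Hence, in either case, Theorem \ref{S sets} yields that $\varphi^{-1}(S(X_K))$ is full in $M$, which is exactly the desired conclusion.

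I do not expect any genuine obstacle here: the whole weight of the statement is carried by Theorem \ref{S sets}, and the role of this corollary is only to repackage it in the sharper ``non-empty $\iff$ full'' form, in analogy with Theorem \ref{maingasbarri2}(c) for compact Riemann surfaces. The single point worth making explicit is the identification of a point of type $S_a$ with a zero-dimensional subset of type $S_a$ (so that Theorem \ref{S sets} becomes applicable), together with the harmless remark that a point has diameter $0<\delta$.
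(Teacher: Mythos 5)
Your proposal is correct and is precisely what the paper intends: the theorem is stated immediately after Theorem \ref{S sets} with only the phrase ``as a corollary we find'', and the intended argument is exactly the one you spell out — a point of $\varphi^{-1}(S(X_K))$ is a compact subset of type $S$ with respect to $\varphi$ (the paper records this as the ``easy observation'' opening the proof of Theorem \ref{S sets}), it trivially has diameter $0<\ln 2$, and the converse direction is immediate since a full subset of the non-empty manifold $M$ has positive measure.
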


Another important corollary of Theorem \ref{S sets} and of the proof of Theorem \ref{foliations} is the following:

\begin{theorem}\label{foliations and S points} Let $\cF$ be a foliation on a smooth quasi projective variety $Z$ defined over a number field $K$. Let $p\in Z(K)$ be a rational point and $h:\Delta_1\to Z_\sigma(\C)$ be the analytic leaf  of $\cF$ through $p$. Then, if $h(\Delta_1)$ is Zariski dense in $Z$, we have that $h^{-1}(S(Z_K))$ is full in $\Delta_1$.\end{theorem}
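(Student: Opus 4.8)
The plan is to deduce Theorem \ref{foliations and S points} by combining Theorem \ref{S sets} with the construction carried out in the proof of Theorem \ref{foliations}. The observation is that the proof of Theorem \ref{foliations} already does the essential analytic work: it shows that, after replacing $Z$ by the Zariski closure of the leaf and resolving singularities (so that the leaf is Zariski dense), a small subdisk $\Delta_{r_0}\subset\Delta_1$ is a compact subset of type $S_{\ell+\epsilon}$ of $\Delta_1$ with respect to $h$. Indeed, combining the Nevanlinna First Main Theorem estimate on $\Delta_{r_0}$ with the zero estimate (Proposition \ref{zero lemma}) and the foliated Liouville inequality (Proposition \ref{foliated liouville}) yields, for every $d$ and every $s\in H^0(\overline{\cZ},\cL^d)\setminus\{0\}$, a point $w_0\in\partial\Delta_{r_0}$ with $\log\Vert s\Vert(w_0)\geq -C(d^{\ell}\log(d^{\ell})+d+\log^+\Vert s\Vert_{\sup})$, which is bounded below by $-A'(\log^+\Vert s\Vert+d)^{\ell+\epsilon}$ for suitable $A'$. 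Hence $\overline{\Delta_{r_0}}$ (or more precisely any closed subdisk) is a set of type $S_{\ell+\epsilon}$.

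First I would replace $Z$ by the Zariski closure of the leaf and take a resolution of singularities, exactly as at the start of the proof of Theorem \ref{foliations}, so that $h:\Delta_1\to Z_\sigma(\C)$ has Zariski dense image and $Z$ is smooth; this does not affect the statement about $h^{-1}(S(Z_K))$ since points of type $S$ behave well under these operations (the arithmetic polarization is pulled back). Next I would invoke the construction in the proof of Theorem \ref{foliations} to produce, for any chosen $0<r_0<1$, a closed subdisk $\overline{\Delta_{r_1}}\subset\Delta_{r_0}$ that is a compact subset of type $S_a$ of $\Delta_1$ with respect to $h$, for $a=\ell+\epsilon$. The point is that this subdisk can be taken \emph{arbitrarily small}: shrinking $r_0$ only improves the constant $A$ in the Nevanlinna estimate, so for any prescribed $\delta>0$ we can arrange that the diameter of $\overline{\Delta_{r_1}}$ is less than $\delta$.

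Then I would apply Theorem \ref{S sets} with $M=\Delta_1$ and $B=\overline{\Delta_{r_1}}$. Since the universal cover of $\Delta_1$ is $\Delta_1$ itself (the case $\tilde{M}=\Delta_1$), the theorem requires the diameter of $B$ to be smaller than the threshold $\delta$ (which can be taken to be $\ln 2$); by the previous paragraph this is arranged. The conclusion of Theorem \ref{S sets} is then precisely that $h^{-1}(S(Z_K))$ is full in $\Delta_1$ for the Lebesgue measure, which is what we want.

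I do not expect a serious obstacle here: the result is essentially a corollary obtained by feeding the output of one proof into the hypothesis of another. The only point that needs a little care is the claim that the subdisk furnished by the proof of Theorem \ref{foliations} can be made to have diameter below the threshold $\ln 2$ required by Theorem \ref{S sets} in the $\Delta_1$ case — but this is immediate since the type-$S_a$ property of $\overline{\Delta_{r_1}}$ is preserved (with a possibly different constant $A$) when $r_1$ is decreased, the Nevanlinna characteristic of $\cL$ on a smaller disk being smaller. A secondary routine point is checking that the passage to the Zariski closure and resolution is harmless for the notion of "type $S$" point and for fullness, which follows from functoriality of heights and sections under the pullback of the arithmetic polarization.
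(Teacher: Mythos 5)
Your proposal is correct and takes essentially the same route as the paper: the paper's proof is a one-sentence remark that the argument for Theorem \ref{foliations} already shows every neighborhood of $p$ in $\Delta_1$ is of type $S$ with respect to $h$, and you have fleshed out exactly that, including the observation that shrinking the subdisk makes its diameter fall below the $\ln 2$ threshold required by Theorem \ref{S sets} in the $\tilde{M}=\Delta_1$ case. (One small simplification: since the statement already hypothesizes that $h(\Delta_1)$ is Zariski dense in $Z$, the initial replacement of $Z$ by the Zariski closure of the leaf and the resolution of singularities are not needed here.)
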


For the proof, it suffices to remark that, in the proof of Theorem \ref{foliations} we proved that every neighborhood of $p$ in $\Delta_1$ is of type $S$ with respect to $h$.

\subsection{Area of the set of points where a global section is small}\label{section small sections}

Theorem \ref{S sets} will be consequence of the following estimate, which will is interesting in its own.

Let $M$ be a Riemann Surface. We fix an hermitian line bundle $L$ on $M$.  We fix a point $z_0\in U$. We also fix a positive metric on $M$. For every positive integer $d$, we will denote by $H^0_B(M;L^d)$ the subspace of  section $s\in H^0(M, L^d)$ for which $\sup_{z\in M}\{\Vert s\Vert\}\leq C(s)$, where $C(s)$ is a positive constant, depending on $s$. For $s\in H^0_B(M;L^d)$, we will denote by $\Vert s\Vert_M$ the number $\sup_{z\in M}\{\Vert s\Vert\}$. More generally, if $B\subset M$, we will denote by $\Vert s\Vert_B$ the number $\sup_{z\in B}\{\Vert s\Vert\}$. We fix a universal covering $\alpha:\tilde M\to M$ and consequently we can fix a metric $\mu(\cdot)$ with constant curvature on $M$. The distance defined by the metric will be denoted by $d_M(\cdot,\cdot)$.

Let $U\subset M$ be a non dense open set. We will say that $U$ is {\it small} if:

-- when $\tilde M=\C$, the closure $\overline U$ of $U$ is compact;

-- when $\tilde M=\Delta_1$, the closure $\overline U$ of $U$ is compact and its diameter is strictly less than $\delta:=\ln(2)$.

The main theorem of this sub section is:

\begin{theorem}\label{main estimate} Let $M$ be a Riemann surface, $L$ be an hermitian line bundle over it, $U$ be a small open subset of $M$ and $W\subset U$any subset. We can find positive real constants $C_i$, depending only on $M$, $L$  $U$ and $W$ for which the following holds: Let $0<\eta<1$ be a positive constant. For every positive integer $d$ and global section $s\in H_B^0(M,L^d)\setminus\{0\}$ such that $s(z_0)\neq 0$, define
\begin{equation}
B(s):=\left\{ z\in U\;/\; \ln\Vert s\Vert(z)\leq -C_1(\ln\left({{1}\over{\eta}}\right)+1)(\log^+\Vert s\Vert_M+d) +3 \ln\Vert s\Vert_W\right\}.
\end{equation}
Then 
\begin{equation}
\mu(B(s))\leq C_2 \eta^2.
\end{equation}
\end{theorem}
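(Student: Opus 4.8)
The plan is to reduce everything to a statement on the universal cover, where the line bundle becomes trivial and $\varphi^\ast(s)$ (or rather the pulled-back section) becomes an honest holomorphic function, and then to apply the classical Bloch–Cartan estimate to control the area of the sublevel set of its logarithm. First I would lift the situation via $\alpha:\tilde M\to M$: choose a fundamental-domain-type preimage $\tilde U\subset\tilde M$ of $U$, together with a preimage $\tilde z_0$ of $z_0$ and a preimage $\tilde W$ of $W$. Since $U$ is small, $\overline{\tilde U}$ is relatively compact (in the disk case with diameter $<\ln 2$, so that $\tilde U$ sits inside a fixed disk of radius bounded away from $1$, which is where the constant $\delta=\ln(2)$ enters). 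Trivializing $\alpha^\ast(L^d)=\cO\cdot e^{\otimes d}$ on a fixed relatively compact neighborhood $\tilde V\supset\overline{\tilde U}$, write $\alpha^\ast(s)=f\cdot e^{\otimes d}$ with $f$ holomorphic on $\tilde V$; then $\ln\Vert s\Vert(\alpha(z))=\ln|f(z)|+d\ln\Vert e\Vert(z)$, and $\ln\Vert e\Vert$ is bounded above and below on $\overline{\tilde V}$ by constants depending only on $M,L,U$. Hence $B(s)$ pulls back to (a controlled number of translates of) the set where $\ln|f|$ is small, up to an additive error linear in $d$ absorbed into $C_1$.

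Next I would set up the Bloch–Cartan input. We know $\Vert s\Vert_M\le \Vert s\Vert_{\sup\text{ on }M}$, so on $\tilde V$ we get $\ln|f|\le \log^+\Vert s\Vert_M + O(d)$, i.e. $f$ has a good upper bound. We also know $s(z_0)\ne0$, and more importantly we will use the bound on $\Vert s\Vert_W$: there is a point $\tilde w_1\in\tilde W$ with $\ln|f(\tilde w_1)|\ge \ln\Vert s\Vert_W - O(d)$. After normalizing $f$ by its sup on a fixed disk containing $\tilde V$ (so the normalized function is bounded by $1$), the quantity $\ln\Vert s\Vert_W$ becomes, up to the linear-in-$d$ error, a lower bound for $|f|$ at the specific point $\tilde w_1$. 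The classical Bloch–Cartan theorem then says: for a holomorphic function bounded by $1$ on a disk, and for any $H>0$, the set where $\ln|f|<-H$ outside the exceptional discs has small measure; quantitatively, the set of $z$ in the smaller disk where $|f(z)|$ is below $e^{-H}$ is contained in a union of discs whose total radius is $\le C\,e^{-H/(\text{something}\cdot N)}$, where $N$ measures how large $-\ln|f|$ is allowed to be at the reference point $\tilde w_1$. Choosing $H=C_1(\ln(1/\eta)+1)(\log^+\Vert s\Vert_M+d)$ and using that at $\tilde w_1$ we have $\ln|f|\ge -C(\log^+\Vert s\Vert_M+d)+\ln\Vert s\Vert_W$ (hence the "denominator" $N$ in Bloch–Cartan is $\le C(\log^+\Vert s\Vert_M+d) - \ln\Vert s\Vert_W$, and here is where the coefficient $3$ in front of $\ln\Vert s\Vert_W$ in the definition of $B(s)$ is engineered to make the ratio $H/N$ come out to exactly $\asymp \ln(1/\eta)$), we get that $B(s)$ is covered by discs of total radius $\le C\eta$, hence of total area $\le C_2\eta^2$. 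The metric $\mu$ is comparable to the Euclidean one on the relatively compact $\tilde U$, so areas transfer with a fixed constant, and the covering $\alpha$ contributes only a fixed finite multiplicity over $U$.

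The technical heart, and the step I expect to be the main obstacle, is the precise bookkeeping in the Bloch–Cartan application: making sure that the ratio between the depth $H$ of the sublevel set and the defect $N$ of $\ln|f|$ at the reference point is pinned down well enough that the total radius of the exceptional discs is genuinely $O(\eta)$ (so the area is $O(\eta^2)$), rather than $O(\eta^c)$ for some unwanted exponent; this is exactly what forces the somewhat artificial-looking shape of the set $B(s)$ (the factor $3\ln\Vert s\Vert_W$ and the product $(\ln(1/\eta)+1)(\log^+\Vert s\Vert_M+d)$). A secondary nuisance is handling the case $\tilde M=\C$ versus $\tilde M=\Delta_1$ uniformly: in the disk case one must check that smallness of $U$ (diameter $<\ln 2$ in the hyperbolic metric) really does confine a lift $\tilde U$ to a disk of radius $\le r_0<1$ on which the Bloch–Cartan machinery, stated for the unit disk, applies after rescaling with constants depending only on $r_0$. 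Once these two points are handled, assembling the inequality $\mu(B(s))\le C_2\eta^2$ is routine.
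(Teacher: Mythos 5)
Your plan follows essentially the same route as the paper: lift to the universal cover, trivialize $L$ and write $\alpha^\ast(s)=f\cdot e^{\otimes d}$, reduce to a lower-bound estimate for $\ln|f|$ on a disk of Euclidean radius $<1/3$ (this is exactly where the smallness/$\ln(2)$-diameter hypothesis enters), and bound the sublevel set via a Cartan-type minimum-modulus argument driven by the upper bound $\log^+\Vert s\Vert_M+O(d)$ and the lower bound $\ln\Vert s\Vert_W-O(d)$ at a reference point. The only substantive divergence is that you invoke the disk-level Cartan estimate for holomorphic functions as a classical black box, whereas the paper proves it from scratch as Lemma \ref{holomorphic estimate 2} by combining a Borel--Carath\'eodory-type bound (Lemmas \ref{harmonic estimate} and \ref{holomorphic estimate}), Jensen's zero count, a Blaschke-product factorization, and the polynomial Bloch--Cartan estimate (Proposition \ref{bloch cartan}); you correctly flag this bookkeeping as the technical heart of the proof.
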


In order to prove Theorem \ref{main estimate}, we need some classical results from complex analysis. The first one is the classical estimate by Bloch and Cartan (\cite{Lang} Theorem 3.1 page 236):

\begin{proposition}\label{bloch cartan} Let $a_1,\dots, a_n$ be $n$ complex numbers (which may not be distinct). Let $H$ be a positive real number. Then the numbers $z\in\C$ for which one has the inequality
\begin{equation}
\prod_{i=1}^n|z-a_i|\leq \left( {{H}\over{2e}}\right)^n
\end{equation}
is contained in the union of at most $n$ discs, such that the sum of the radii is bounded by $H$.
\end{proposition}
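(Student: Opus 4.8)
This is the classical Bloch--Cartan covering estimate, and I would prove it by an iterative ``cluster extraction'' combined with the elementary inequality $n!\ge (n/e)^n$, which is precisely what produces the factor $e$ in the denominator.

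First I would fix the scale $h:=H/(2n)$ and, for an integer $k\ge 1$, call a closed disc \emph{heavy of level $k$} if it has radius $kh$ and contains at least $k$ points of a given finite multiset. Starting from $\{a_1,\dots,a_n\}$, I repeatedly choose a heavy disc of maximal level, say $D_i$ of radius $p_ih$, remove from the multiset the points lying in $D_i$, and pass to the remaining points. A short argument shows that $D_i$ contains \emph{exactly} $p_i$ points: if it contained $q>p_i$ of them, the concentric disc of radius $qh$ would be heavy of level $q$, contradicting maximality of $p_i$. Hence $\sum_i p_i=n$, the process stops after $m\le n$ steps, and it produces discs $D_1,\dots,D_m$ of radii $p_1h,\dots,p_mh$. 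Enlarging each concentrically to $\widehat D_i$ of radius $2p_ih$, the radii of the $\widehat D_i$ sum to $\sum_i 2p_ih=2hn=H$.

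The core of the argument is the claim, proved by induction on $n$ (equivalently, on the number of extraction steps): if $z\notin\bigcup_i\widehat D_i$, then for every $k\ge 1$ the closed disc $\overline{B}(z,kh)$ contains at most $k-1$ of the $a_j$. For the inductive step, suppose $\overline{B}(z,kh)$ contained at least $k$ of them. Then $\overline{B}(z,kh)$ is itself heavy of level $k$, so the first extracted level satisfies $p_1\ge k$. If at least one of those $k$ points lies in $D_1$, then $z$ is within distance $kh+p_1h\le 2p_1h$ of the centre of $D_1$, hence $z\in\widehat D_1$, a contradiction; otherwise all of those $\ge k$ points survive the first extraction, and the inductive hypothesis applied to the $n-p_1$ remaining points with the \emph{same} scale $h$ is violated. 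Granting the claim, order the distances $|z-a_1|\le\cdots\le|z-a_n|$; the claim forces $|z-a_k|>kh$ for every $k$, whence
\[
\prod_{j=1}^{n}|z-a_j|\;>\;h^{n}\,n!\;\ge\;h^{n}\Bigl(\frac{n}{e}\Bigr)^{n}\;=\;\Bigl(\frac{hn}{e}\Bigr)^{n}\;=\;\Bigl(\frac{H}{2e}\Bigr)^{n}.
\]
Taking the contrapositive shows that $\{\,z\in\C:\prod_j|z-a_j|\le (H/(2e))^n\,\}$ is contained in $\bigcup_{i=1}^{m}\widehat D_i$, a union of $m\le n$ discs with total radius $H$, which is the assertion.

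The step I expect to cost the most care is this combinatorial claim: one has to deal with coincident $a_j$'s, with the stages at which fewer than $k$ points remain, and with the precise ``absorption'' inequality $kh+p_1h\le 2p_1h$ that places $z$ inside $\widehat D_1$. The deliberate doubling of the discs and the identity $\sum_i p_i=n$ are exactly what make the radius bound and the combinatorial claim come out of the construction simultaneously.
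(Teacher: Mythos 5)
The paper does not give a proof of this proposition: it is quoted verbatim, with a citation, from Lang's \emph{Introduction to Complex Hyperbolic Spaces} (Theorem 3.1, p.~236), so there is no in-paper argument to compare against. Your argument is a correct, self-contained proof, and it is the classical Boutroux--Cartan extraction scheme: set $h=H/(2n)$, greedily pull out a maximal-level heavy disc $D_1$ (which therefore contains exactly $p_1$ points), iterate to get $D_1,\dots,D_m$ with $\sum p_i=n$, double to $\widehat D_i$ so the radii sum to $2hn=H$, and then prove the key combinatorial claim that a point $z$ outside $\bigcup\widehat D_i$ has at most $k-1$ of the $a_j$ in $\overline B(z,kh)$, which after sorting forces $|z-a_k|>kh$ and hence $\prod_j|z-a_j|>h^n n!\ge (H/(2e))^n$ via $n!\ge(n/e)^n$. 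The case analysis in the inductive step is handled correctly: if a point of $\overline B(z,kh)$ lies in $D_1$ then $|z-c_1|\le kh+p_1h\le 2p_1h$ (using $p_1\ge k$ from maximality) puts $z$ in $\widehat D_1$; otherwise all $\ge k$ such points survive to the reduced multiset and contradict the inductive hypothesis. The only point worth spelling out in a polished version is that the induction must be on the number of extraction steps (or, strong induction on the multiset size) with the scale $h=H/(2n)$ fixed once by the \emph{original} $n$ and never recomputed; your parenthetical ``with the same scale $h$'' signals that you intend exactly this, so the proof is sound.
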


We will use this proposition in the following form:

\begin{proposition}\label{bloch cartan2} Let $a_1,\dots, a_n$ and $H$ as in Proposition \ref{bloch cartan}. Then 
\begin{equation}
\mu(\{ z\in\C\;/\; \prod_{i=1}^n|z-a_i|\leq \left( {{H}\over{2e}}\right)^n\})\leq \pi H^2.
\end{equation}
\end{proposition}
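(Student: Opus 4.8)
The claim to prove is Proposition \ref{bloch cartan2}: given the Bloch--Cartan bound, the set where $\prod_{i=1}^n|z-a_i|\le (H/2e)^n$ has Lebesgue measure at most $\pi H^2$. Let me sketch the (short) argument.

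\medskip

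The plan is to deduce Proposition \ref{bloch cartan2} directly from Proposition \ref{bloch cartan} by a crude but effective estimate on the area of a union of discs. By Proposition \ref{bloch cartan}, the set $E:=\{z\in\C : \prod_{i=1}^n|z-a_i|\le (H/2e)^n\}$ is contained in a union of discs $D_1,\dots,D_m$ with $m\le n$ whose radii $r_1,\dots,r_m$ satisfy $\sum_{j=1}^m r_j\le H$. First I would write
\begin{equation}
\mu(E)\le \sum_{j=1}^m \mu(D_j)=\pi\sum_{j=1}^m r_j^2.
\end{equation}
Then the key elementary inequality is $\sum_j r_j^2\le \bigl(\sum_j r_j\bigr)^2$, valid since all $r_j\ge 0$ (expanding the square, the cross terms are nonnegative). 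Combining these gives $\mu(E)\le \pi\bigl(\sum_j r_j\bigr)^2\le \pi H^2$, which is exactly the asserted bound.

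\medskip

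There is essentially no obstacle here: the statement is a soft corollary of Bloch--Cartan, and the only ingredient beyond quoting Proposition \ref{bloch cartan} is the subadditivity of measure together with the trivial estimate $\sum r_j^2\le(\sum r_j)^2$ for nonnegative reals. One minor point worth a line of care is that Proposition \ref{bloch cartan} is stated with ``at most $n$ discs'' while allowing the $a_i$ to coincide and allowing degenerate situations; in all cases the covering discs still have radii summing to at most $H$, so the estimate above goes through verbatim. No measurability subtlety arises since $E$ is closed (it is the preimage of a closed set under the continuous map $z\mapsto\prod_i|z-a_i|$), hence Lebesgue measurable, and it is bounded, so $\mu(E)$ is finite.

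\medskip

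Thus the proof is just: apply Proposition \ref{bloch cartan} to cover $E$ by discs of total radius $\le H$, bound $\mu(E)$ by the sum of the areas of these discs, and use $\sum r_j^2\le(\sum r_j)^2$ to conclude $\mu(E)\le\pi H^2$. I would write this out in three or four lines.
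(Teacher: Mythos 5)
Your argument is correct, and it is precisely the natural deduction the paper leaves implicit (the paper states Proposition \ref{bloch cartan2} without proof, evidently because it is this one-line consequence of Proposition \ref{bloch cartan}): cover the set by the $\le n$ discs of Bloch--Cartan, use subadditivity of measure, and conclude with $\sum_j r_j^2 \le \bigl(\sum_j r_j\bigr)^2 \le H^2$ for nonnegative $r_j$. Nothing is missing; your remarks on measurability and degenerate coverings are harmless but not needed.
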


Another standard result of complex analysis is the following formula due to Poisson: Let $f(z)$ be a holomorphic function in the disk $\{ |z|\leq R\}$. Write $f(z)=u(z)+iv(z)$ then, for every $z$ such that $|z|<R$, we have:
\begin{equation}\label{poisson}
f(z)={{1}\over{2\pi}}\cdot\int_0^{2\pi}u(Re^{i\theta})\cdot{{Re^{i\theta}+z}\over{Re^{i\theta}-z}}\cdot d\theta+iv(0).
\end{equation}

We will first prove Theorem \ref{main estimate} in the case when $U$ is the disk $\Delta_r:=\{ |z|<r\}$ inside the disk $\Delta_1:=\{ |z|<1\}$ and then a topological argument will allow to deduce the general case. In this case we may suppose that $z_0=0$. 

We would like to remark that, strictly speaking (modulo some adaptations), only the case of disks is necessary for the proofs of this paper, but for sake of completeness, and for future reference, we prefer  giving the proof for a general Riemann Surface. 

The first three lemmas are from complex analysis:

\begin{lemma}\label{harmonic estimate} Let $f(z)$ be an holomorphic function on $\Delta_1$. Let $r<R<1$ be a real number. Write $f(z)=u(z)+iv(z)$ and denote by $A_R(f):=\sup\{u(z)
;/\; |z|<R\}$. Then the following inequality holds:
\begin{equation}
\sup_{\Delta_r}\{ |f(z)|\}\leq [A_R(f)-u(0)]{{2r}\over{R-r}}+|f(0)|.
\end{equation}
\end{lemma}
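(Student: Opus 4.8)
\textbf{Proof plan for Lemma \ref{harmonic estimate}.}

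The statement is the classical Borel--Carathéodory inequality, so the plan is to reproduce its standard proof, adapted to the slightly asymmetric normalization used here (the bound involves $A_R(f)-u(0)$ rather than $|A_R(f)|+|f(0)|$). First I would reduce to the case $f(0)=0$ by replacing $f$ with $g(z):=f(z)-f(0)$; this shifts $u(0)$ to $0$ and $A_R(f)$ to $A_R(f)-u(0)$, and it changes $\sup_{\Delta_r}|f|$ by at most $|f(0)|$, which is exactly the additive term on the right-hand side. So it suffices to show that if $g$ is holomorphic on $\Delta_1$ with $g(0)=0$ and $\mathrm{Re}\,g(z)\le A$ on $\Delta_R$ (where $A:=A_R(f)-u(0)\ge 0$, noting $u(0)\le A_R(f)$ by the mean value property), then $\sup_{\Delta_r}|g|\le 2rA/(R-r)$.

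The key step is the standard Möbius/Schwarz-lemma trick. Since $\mathrm{Re}\,g\le A$ on $\Delta_R$ and $g(0)=0$, the function
\begin{equation}
\phi(z):=\frac{g(Rz)}{2A-g(Rz)}
\end{equation}
is holomorphic on $\Delta_1$, satisfies $\phi(0)=0$, and maps into the unit disk (because $w\mapsto w/(2A-w)$ sends the half-plane $\{\mathrm{Re}\,w<A\}$, hence also its closure minus the point $w=2A$, into $\overline{\Delta_1}$; one checks $|w|\le|2A-w|$ precisely when $\mathrm{Re}\,w\le A$). By the Schwarz lemma, $|\phi(z)|\le|z|$ on $\Delta_1$, i.e. $|g(Rz)|\le|z|\,|2A-g(Rz)|\le|z|(2A+|g(Rz)|)$ for $|z|<1$. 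Writing $\rho=|z|$ and solving, $|g(Rz)|\le 2A\rho/(1-\rho)$. Setting $\rho=r/R$ gives, for $|w|<r$,
\begin{equation}
|g(w)|\le\frac{2A(r/R)}{1-r/R}=\frac{2Ar}{R-r},
\end{equation}
which is the desired bound. Reinstating the term $|f(0)|$ from the reduction completes the proof.

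There is essentially no serious obstacle here: the only point requiring a little care is the sign bookkeeping in the normalization — making sure that $A_R(f)-u(0)\ge 0$ (which follows since $u(0)=\frac{1}{2\pi}\int_0^{2\pi}u(\rho e^{i\theta})\,d\theta\le A_R(f)$ for any $\rho<R$ by harmonicity of $u$) and that the Möbius map $w\mapsto w/(2A-w)$ is well-defined and disk-valued on the relevant half-plane, including the degenerate case $A=0$ where $g\equiv 0$ on $\Delta_R$ and the inequality is trivial. Everything else is the textbook Borel--Carathéodory argument.
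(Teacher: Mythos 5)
Your proof is correct, but it takes a genuinely different route from the one in the paper. The paper's argument is the Poisson-kernel (Schwarz-integral) proof of Borel--Carath\'eodory: starting from the formula
\[
f(z)=\frac{1}{\pi}\int_0^{2\pi} u(Re^{i\theta})\,\frac{z}{Re^{i\theta}-z}\,d\theta + f(0),
\]
one subtracts the constant $A_R(f)$ under the integral (which is legal because $\int_0^{2\pi}\frac{z}{Re^{i\theta}-z}\,d\theta=0$), uses the pointwise sign $A_R(f)-u(Re^{i\theta})\ge 0$, the elementary bound $\bigl|\frac{z}{Re^{i\theta}-z}\bigr|\le\frac{r}{R-r}$ for $|z|\le r$, and the mean-value identity $u(0)=\frac{1}{2\pi}\int_0^{2\pi}u(Re^{i\theta})\,d\theta$ to collapse the integral to $[A_R(f)-u(0)]\frac{2r}{R-r}$. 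Your proof instead normalizes $g:=f-f(0)$ and applies the Schwarz lemma to the M\"obius composite $\phi(z)=g(Rz)/(2A-g(Rz))$, then unwinds. Both are classical proofs of Borel--Carath\'eodory and land on exactly the same constants. The paper's Poisson-kernel version is a little more in keeping with the integral-representation tools (Poisson formula, Nevanlinna first main theorem) used throughout the surrounding section, while your Schwarz-lemma version is arguably shorter and makes the positivity of $A_R(f)-u(0)$ and the mapping-to-a-half-plane geometry more transparent. Your handling of the degenerate case $A=0$ (then $g\equiv 0$, so the inequality is trivial) and the verification that $w\mapsto w/(2A-w)$ is disk-valued on $\{\mathrm{Re}\,w\le A\}$ are both fine; no gaps.
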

\begin{proof} Since $u(z)$ is harmonic, we have $u(0)={{1}\over{2\pi}}\int_0^{2\pi} u(Re^{i\theta})d\theta$. From this we obtain
\begin{equation}
f(z)={{1}\over{\pi}}\int_0^{2\pi}u(Re^{i\theta}){{z}\over{Re^{i\theta}-z}}d\theta+f(0).
\end{equation}
In particular, applying this to $f(z)=1$ we obtain ${{1}\over{\pi}}\int_0^{2\pi}{{z}\over{Re^{i\theta}-z}}d\theta=0$. From this we obtain
\begin{equation}
-f(z)={{1}\over{\pi}}\int_0^{2\pi}[A_R(f)-u(Re^{i\theta})]{{z}\over{Re^{i\theta}-z}}d\theta+f(0).
\end{equation}
Since, in the integrant, $A_R(f)-u(Re^{i\theta})\geq 0$ the conclusion follows. \end{proof}

As a consequence, we obtain this non trivial estimate for non vanishing holomorphic functions on a disk:

\begin{lemma}\label{holomorphic estimate}  Let Let $0<r<R<1$ real numbers. Let $f(z)$ be a holomorphic function in $\Delta_1$ non vanishing in $\overline\Delta_R$. Then, for every $z\in\Delta_r$ we have
\begin{equation}
\ln|f(x)|\geq - {{2r}\over{R-r}}\sup_{\Delta_R}\{\ln|f(z)|\}+{{R+r}\over{R-r}}\ln|f(0)|.
\end{equation}
\end{lemma}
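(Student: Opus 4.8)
The plan is to deduce the estimate from Lemma \ref{harmonic estimate} applied to a suitably normalized branch of $\log f$. Since $f$ is holomorphic on $\Delta_1$ and non-vanishing on the compact set $\overline\Delta_R$, it is non-vanishing on $\Delta_\rho$ for some $R<\rho<1$; as $\Delta_\rho$ is simply connected there is a holomorphic branch $\log f$ of the logarithm of $f$ on $\Delta_\rho$, and $\log f(0)$ is well defined since $f(0)\neq 0$ (the origin lies in $\overline\Delta_R$). I would then set $g(z):=\log f(z)-\log f(0)$, which is holomorphic on $\Delta_\rho$, satisfies $g(0)=0$, and has real part ${\rm Re}\,g(z)=\ln|f(z)|-\ln|f(0)|$.

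Next I would apply Lemma \ref{harmonic estimate} to $g$ (after the harmless rescaling $z\mapsto\rho z$ that turns $\Delta_\rho$ into $\Delta_1$ and the radii $r<R$ into $r/\rho<R/\rho<1$). Because $g(0)=0$, the two base-point terms ${\rm Re}\,g(0)$ and $|g(0)|$ in that lemma vanish; and since $A_R(g)=\sup_{\Delta_R}{\rm Re}\,g=\sup_{\Delta_R}\ln|f|-\ln|f(0)|$, the lemma gives, for every $z\in\Delta_r$,
\[
|g(z)|\ \le\ \sup_{\Delta_r}|g|\ \le\ \frac{2r}{R-r}\left(\sup_{\Delta_R}\ln|f|-\ln|f(0)|\right).
\]
(The right-hand side is automatically non-negative, since $\sup_{\Delta_R}\ln|f|\ge{\rm Re}\,g(0)+\ln|f(0)|=\ln|f(0)|$.)

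Finally, from ${\rm Re}\,g(z)\ge-|g(z)|$ for $z\in\Delta_r$, i.e.
\[
\ln|f(z)|-\ln|f(0)|\ \ge\ -\frac{2r}{R-r}\left(\sup_{\Delta_R}\ln|f|-\ln|f(0)|\right),
\]
I would rearrange, using $1+\frac{2r}{R-r}=\frac{R+r}{R-r}$, to obtain precisely
\[
\ln|f(z)|\ \ge\ -\frac{2r}{R-r}\sup_{\Delta_R}\ln|f|+\frac{R+r}{R-r}\ln|f(0)|.
\]

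I do not expect a genuine obstacle; the one point that really matters is the normalization by $\log f(0)$: applying Lemma \ref{harmonic estimate} directly to $\log f$ would leave a spurious term $|\log f(0)|$, which depends on $\arg f(0)$ and not merely on $|f(0)|$, and this would both weaken the bound and destroy the sharp coefficient $\frac{R+r}{R-r}$. One should also note, as a routine technical remark, that a holomorphic branch of $\log f$ indeed exists on a disk slightly larger than $\overline\Delta_R$ (so that $\sup_{\Delta_R}\ln|f|$ is finite and Lemma \ref{harmonic estimate} applies), which uses only non-vanishing of $f$ on the compact disk $\overline\Delta_R$ together with the discreteness of its zero set in $\Delta_1$.
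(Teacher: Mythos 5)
Your proof is correct and follows essentially the same route as the paper's: apply Lemma \ref{harmonic estimate} to a branch of $\log f$ normalized to vanish at the origin, and use $\operatorname{Re} g \ge -|g|$. The only difference is cosmetic — the paper first reduces to $f(0)=1$ by a "we may suppose" remark, whereas you carry $\log f(0)$ through explicitly and thereby make the final rearrangement (and the role of the coefficient $\tfrac{R+r}{R-r}$) visible.
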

\begin{proof} We first remark that we can suppose that $f(0)=1$.

Since $f$ is non vanishing in $\overline\Delta_R$ we may find a neighborhood of it  and a holomorphic function   $g(z)=u(z)+iv(z)$ over this neighborhood, such that $f(z)=e^{g(z)}$. Moreover, since $f(0)=1$ then $g(0)=0$. It is easy to see that $\ln|f(z)|=u(z)$ and consequently $\sup_{\Delta_R}\{\ln|f(z)|\}=A_R(g)$ (notation as in the previous lemma).  If we apply Lemma \ref{harmonic estimate} to the function $g(z)$ we obtain
\begin{equation}
-u(z)\leq {{2r}\over{R-r}}A_R(g).
\end{equation}
From this the conclusion follows. \end{proof}

We generalize now Lemma \ref{holomorphic estimate} to a general holomorphic function on the disk. This can be seen as a simplified  version of Theorem \ref{main estimate} to the case when $M$ is a disk, $U$ is a smaller disk and $L$ is the trivial line bundle with the trivial metric. 

\begin{lemma}\label{holomorphic estimate 2} Let  $0<r<1/3$ be a real number. Let $f(z)$ be a holomorphic function on $\Delta_1$ such that $f(0)\neq 0$. Let $\eta$ be a positive real number less or equal than $1$.  Then the following estimate holds:
\begin{equation}
\mu\left\{ z\in \Delta_r\;/\; \ln|f(z)|<-(2+{{1}\over{\ln(3/2)}}\ln\left({{1}\over{\eta}}\right))\sup_{w\in\Delta_{3r}}\{\ln|f(w)|\} +3\ln|f(0)|\right\}\leq 4\pi e^2\eta^2.
\end{equation}
\end{lemma}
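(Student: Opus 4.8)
The plan is to reduce the statement to the Bloch--Cartan estimate (Proposition~\ref{bloch cartan2}) by carefully tracking the zeros of $f$ inside the small disk $\Delta_{3r}$. First I would normalize: by replacing $f(z)$ with $f(z)/f(0)$ we may assume $f(0)=1$, so that $\ln|f(0)|=0$ and the claimed bound becomes $\mu(\{z\in\Delta_r: \ln|f(z)|< -(2+\tfrac{1}{\ln(3/2)}\ln(1/\eta))\,\sup_{\Delta_{3r}}\ln|f|\})\le 4\pi e^2\eta^2$. Write $m:=\sup_{w\in\Delta_{3r}}\ln|f(w)|\ge \ln|f(0)|=0$. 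Let $a_1,\dots,a_N$ be the zeros of $f$ in $\overline\Delta_{3r}$, counted with multiplicity; a Jensen-type argument (or the First Main Theorem on $\Delta_{3r}$) bounds $N$ in terms of $m$, but actually we will not need an a priori bound on $N$ — the point is the factorization.

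The key step is to factor $f(z)=B(z)\,g(z)$ on $\Delta_{3r}$, where $B(z)=\prod_{i=1}^N (z-a_i)$ is the polynomial vanishing exactly at the zeros of $f$ in $\overline\Delta_{3r}$ and $g(z)=f(z)/B(z)$ is holomorphic and non-vanishing on $\overline\Delta_{3r}$. On this nonvanishing factor I would invoke Lemma~\ref{holomorphic estimate} with the pair of radii $r<R:=3r$ (note $3r<1$ since $r<1/3$), which is exactly why the outer radius $3r$ appears: for $z\in\Delta_r$,
\begin{equation}
\ln|g(z)|\ge -\frac{2r}{3r-r}\sup_{\Delta_{3r}}\ln|g| + \frac{3r+r}{3r-r}\ln|g(0)| = -\ln|g|_{\sup,\Delta_{3r}} + 2\ln|g(0)|.
\end{equation}
Here $\sup_{\Delta_{3r}}\ln|g|\le \sup_{\Delta_{3r}}\ln|f| + \sup_{\Delta_{3r}}\ln\frac{1}{|B|}$, and on $\overline\Delta_{3r}$ one has $|z-a_i|\le 6r<2$, so $\ln\frac{1}{|B(z)|}\ge -N\ln 2 \ge -N$ roughly; combining this with the lower bound $m\ge 0$ I can absorb the $B$-contribution into a term of the form $(\text{const})\cdot N$, but more cleanly I would keep $\ln|f(0)|=\ln|g(0)|+\ln|B(0)|$ and note $\ln|B(0)|=\sum\ln|a_i|$. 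The upshot: for $z\in\Delta_r$, $\ln|f(z)| = \ln|B(z)| + \ln|g(z)| \ge \ln|B(z)| - C\,m$ for a small explicit constant $C$ (using $\ln|g|_{\sup}\lesssim m$ after absorbing the bounded zero-factors and $\ln|g(0)|\ge -Cm$ similarly). Hence $\ln|f(z)|$ is small only where $|B(z)|=\prod|z-a_i|$ is small, namely where $\prod_{i=1}^N|z-a_i|\le e^{-(1+\frac{1}{\ln(3/2)}\ln(1/\eta))m}$.

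The final step is to match this threshold with the form required by Proposition~\ref{bloch cartan2}. Set $H$ so that $(H/(2e))^N = e^{-(\cdots)m}$; the factor $\frac{1}{\ln(3/2)}\ln(1/\eta)$ is engineered precisely so that, after taking $N$-th roots and using the elementary bound $N\le m/\ln(3/2)$ coming from Jensen on $\Delta_{3r}$ (since $f(0)=1$ forces $\sum_{|a_i|\le 2r}\ln\frac{2r\cdot 3}{\cdot}$-type inequality — more simply $N\ln(3/2)\le \ln|f|_{\sup,\Delta_{3r}}=m$ by comparing the disk $\Delta_{2r}$ inside $\Delta_{3r}$), we get $H/(2e)\le \eta/(2e)$, i.e.\ $H\le \eta$. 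Then Proposition~\ref{bloch cartan2} gives $\mu\le \pi H^2 \le \pi\eta^2$; keeping track of the constants picked up from the $g$-factor and from rounding inflates this to $4\pi e^2\eta^2$. The main obstacle I anticipate is bookkeeping the constants so that the coefficient $2+\frac{1}{\ln(3/2)}\ln(1/\eta)$ comes out exactly, rather than with an extra additive or multiplicative slack — this requires being slightly careful about whether the $\ln|g(0)|$ term and the $\ln|B|_{\sup}$ versus $\ln|B(0)|$ discrepancy are controlled by $m$ with the right constant, and about the Jensen bound $N\lesssim m/\ln(3/2)$ being applied with the correct pair of radii. Everything else is routine application of the three preceding complex-analysis lemmas.
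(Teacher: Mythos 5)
Your overall architecture is right --- factor out zeros, bound the non‑vanishing quotient via the Borel--Carath\'eodory-type Lemma~\ref{holomorphic estimate}, bound the number of tracked zeros by Jensen, and finish with Bloch--Cartan. But there is a genuine gap in the factorization step, and it is precisely the step the paper handles differently.

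You factor $f=B\cdot g$ with $B(z)=\prod_{i=1}^N(z-a_i)$ the raw polynomial through the zeros of $f$ in $\overline\Delta_{3r}$, and then you need $\sup_{\Delta_{3r}}\ln|g|$ (and $\ln|g(0)|$) to be controlled by $m:=\sup_{\Delta_{3r}}\ln|f|$ in order to feed $g$ into Lemma~\ref{holomorphic estimate}. Your attempt is $\sup_{\Delta_{3r}}\ln|g|\le\sup_{\Delta_{3r}}\ln|f|+\sup_{\Delta_{3r}}\ln\tfrac{1}{|B|}$, and you try to bound the last term by ``$|z-a_i|\le 6r<2$, so $\ln\tfrac{1}{|B(z)|}\ge-N\ln 2$.'' That inequality is a \emph{lower} bound on $\ln\tfrac{1}{|B|}$, whereas you need an \emph{upper} bound, i.e.\ a lower bound on $|B|$ over $\Delta_{3r}$; and no such bound exists, since $B$ vanishes at the $a_i\in\overline\Delta_{3r}$. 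Even restricting to the boundary circle $|z|=3r$ (so as to invoke maximum modulus for $g$), $|B|$ can be arbitrarily small whenever some $a_i$ lies near that circle, because you chose zeros up to $\overline\Delta_{3r}$ rather than $\overline\Delta_{2r}$. Similarly, $\ln|g(0)|=-\sum_i\ln|a_i|$ has uncontrolled sign and size. So as written the reduction to Lemma~\ref{holomorphic estimate} does not close.

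The paper's proof avoids all of this by replacing $B$ with the normalized Blaschke-type product
\begin{equation}
\phi(z)=\frac{(-R)^n}{a_1\cdots a_n}\prod_{i=1}^n\frac{R(z-a_i)}{R^2-z\overline{a_i}},\qquad R=3r,
\end{equation}
built from the zeros $a_1,\dots,a_n$ of $f$ in $\overline\Delta_{2r}$ (not $\overline\Delta_{3r}$). This has exactly the two properties your $B$ lacks: $\phi(0)=1$, so $g(0)=1$ and the $\ln|g(0)|$ term disappears; and $|\phi|\ge 1$ on $|z|=R$, so by the maximum modulus principle $\sup_{\Delta_{2r}}|g|\le\sup_{|z|=R}|f/\phi|\le\sup_{\Delta_{R}}|f|$ with no loss. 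Then Lemma~\ref{holomorphic estimate} is applied on $\Delta_r\subset\Delta_{2r}$ (radii $r<2r$, giving the coefficients $2$ and $3$ of the statement), the additional estimate $|\phi(z)|\ge\prod_i|z-a_i|$ for $z\in\Delta_r$ recovers the Bloch--Cartan product, and Jensen on $\Delta_{3r}$ gives $n\le m/\ln(3/2)$. If you want to keep a polynomial factor, you must at least restrict the $a_i$ to $\overline\Delta_{2r}$ so $|B|\ge r^N$ on $|z|=3r$; but then the loss is $N\ln(1/r)\asymp m\ln(1/r)/\ln(3/2)$, which introduces an $r$-dependent constant and no longer yields the fixed coefficient $2+\tfrac{1}{\ln(3/2)}\ln(1/\eta)$ of the Lemma. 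The Blaschke normalization is therefore not mere bookkeeping but the device that removes the $r$-dependence and the spurious $\ln|g(0)|$ term.

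Two smaller remarks: your radii $(r,3r)$ in Lemma~\ref{holomorphic estimate} give coefficients $1$ and $2$, not the $2$ and $3$ of the statement; this is not a contradiction, but it signals a mismatch with the intended decomposition. And you do in fact need the a priori Jensen bound $N\le m/\ln(3/2)$ before invoking Bloch--Cartan, contrary to the opening claim that no bound on $N$ is required.
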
 

\begin{proof} Once again it suffices to treat the case when $f(0)=1$.  Let $a_1,\dots, a_n$ be the set of zeros of $f(z)$ inside the closed disk of radius $2r$, counted with multiplicity.  By Jensen formula we have that $n\leq {{1}\over{\ln(3/2)}}\sup_{\Delta_{3r}}\{|f(w)|\}$.

In the sequel of this proof we will denote by $R$ the real number $3r$.  Let 
\begin{equation}
\phi(z):={{(-R)^n}\over{a_1\cdots a_n}}\cdot\prod_{i=1}^n{{R(z-a_i)}\over{R^2-z\overline{a_i}}}.
\end{equation}
Observe that $\phi(0)=1$.  If $f(0)\neq 1$, change $\phi(z)$ with $\phi(z)\cdot f(0))$. The function $g(z):={{f(z)}\over{\phi(z)}}$ is holomorphic on $\Delta_{2r}$ and has no zeros on the closed disk $\overline\Delta_1$. Consequently we can apply Lemma \ref{holomorphic estimate} to it  and find that, for every $z\in \Delta_r$ we have:
\begin{equation} 
\ln|g(z)|\geq -2\sup_{w\in\Delta_{2r}}\{|g(w)|\}.
\end{equation}
Since by hypothesis,  $|\phi(z)|>1$ if $|z|=R$, then for the maximum modulus principle, for every $z\in \Delta_r$,
\begin{equation}
\ln|f(z)|\geq -2\sup_{w\in\Delta_{R}}\{|f(w)|\}+\ln|\phi(z)|
\end{equation}

For every $z\in\Delta_r$ we have that $|R^2-\overline{a}z|\leq 2R^2$ and, by hypothesis, $2|a_i|\leq 1$; thus $|\phi(z)|\geq \prod_{i=1}^n|z-a_i|$. Consequently, for every $z\in\Delta_r$ we have
\begin{equation}\label{lower estimate 1}
\ln|f(z)|\geq -2\sup_{w\in\Delta_{R}}\{|f(w)|\}+\ln\prod_{i=1}^n|z-a_i|.
\end{equation}
Let $z\in\Delta_r$ such that
\begin{equation}\label{small estimate1}
\ln|f(z)|\leq -\left(2+{{1}\over{\ln(2/3)}}\ln\left({{1}\over{\eta}}\right)\right)\sup_{w\in\Delta_{R}}\{|f(w)|\};
\end{equation}
then, since $\eta<1$,
\begin{equation}\label{lower estimate 2}
\ln|f(z)|\leq -\left(2\cdot\sup_{w\in\Delta_{R}}\{|f(w)|\}+n\ln\left({{1}\over{\eta}}\right)\right);
\end{equation}
As a consequence of \ref{lower estimate 1} and \ref{lower estimate 2} we find that, if $z\in\Delta_r$ satisfy the estimate \ref{small estimate1} then $z$ is contained in the set
\begin{equation}
\left\{ w\in \Delta_r\;/\; \prod_{i=1}^n|z-a_i|\leq \eta^n\right\}.
\end{equation}
Since, by Proposition \ref{bloch cartan2}, the area of this last set is bounded above by $4e^2\pi\eta^2$, the Lemma follows. \end{proof}

Following the same proof, Lemma \ref{holomorphic estimate 2} can be improved to an arbitrary holomorphic function on the unit disk: the non vanishing at the origin condition can be removed: if $f(z)$ is an holomorphic function on $\Delta_1$, we can write it as $f(z)=z^ih(z)$ with $h(0)\neq 0$. 

\begin{lemma}\label{holomorphic estimate 3} Let $r$ be as in Lemma \ref{holomorphic estimate 2}. Let $f(z)=z^ih(z)$ be a holomorphic function on $\Delta_1$ (with $h(0)\neq 0$). Let $\eta>0$ and $A$ as in Lemma \ref{holomorphic estimate 2}. Then the following estimate holds:
\begin{equation}
\mu\left\{ z\in \Delta_r\;/\; \ln|f(z)|<-\left(2+{{1}\over{\ln(3/2)}}\ln\left({{1}\over{\eta}}\right)\right)\sup_{w\in\Delta_{3r}}\{\ln|f(w)|\} +3\left(\ln|h(0)|+ i\cdot \ln (3r)\right)\right\}\leq 4\pi e^2\eta^2.
\end{equation}
\end{lemma}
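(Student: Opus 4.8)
The plan is to reduce the statement to Lemma~\ref{holomorphic estimate 2} by peeling off the factor $z^i$ and folding its effect into the list of zeros fed to Bloch--Cartan. Write $f(z)=z^ih(z)$ with $h$ holomorphic on $\Delta_1$ and $h(0)\neq 0$, and let $a_1,\dots,a_k$ be the zeros of $h$ inside the closed disk of radius $2r$, counted with multiplicity; then the zeros of $f$ in that disk are the origin (with multiplicity $i$) together with $a_1,\dots,a_k$, a total of $n:=i+k$ points. First I would apply Jensen's formula to $f$ at radius $R:=3r$, which now reads $\tfrac1{2\pi}\int_0^{2\pi}\ln|f(Re^{i\theta})|\,d\theta=\ln|h(0)|+i\ln R+\sum_{0<|a_j|<R}\ln(R/|a_j|)$, to obtain $k\ln(3/2)\le \sup_{w\in\Delta_{3r}}\ln|f(w)|-i\ln(3r)-\ln|h(0)|$, hence a bound on $n$ in terms of $\sup_{\Delta_{3r}}\ln|f|$, $i\ln(3r)$ and $\ln|h(0)|$.

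Next I would repeat, for the nonzero zeros only, the Blaschke-type construction from the proof of Lemma~\ref{holomorphic estimate 2}: set $\tilde\phi(z):=\tfrac{(-R)^k}{a_1\cdots a_k}\prod_{j=1}^k\tfrac{R(z-a_j)}{R^2-z\overline{a_j}}$, so that $\tilde\phi(0)=1$, $|\tilde\phi|\ge 1$ on $\partial\Delta_R$, and — because $r<1/3$ forces the poles $R^2/\overline{a_j}$ to have modulus $\ge R^2/(2r)=9r/2>2r$ — the function $g:=h/\tilde\phi$ is holomorphic on $\Delta_{3r}$, non-vanishing on $\overline\Delta_{2r}$, with $g(0)=h(0)$. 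Since $|z^i\tilde\phi(z)|\ge(3r)^i$ on $\partial\Delta_{3r}$, the maximum principle gives $\sup_{\Delta_{2r}}\ln|g|\le\sup_{\Delta_{3r}}\ln|f|-i\ln(3r)$. Applying Lemma~\ref{holomorphic estimate} to $g$ on $\Delta_{2r}$ with inner radius $r$ (so the two factors become $\tfrac{2r}{2r-r}=2$ and $\tfrac{3r}{2r-r}=3$) then yields, for $z\in\Delta_r$,
\[
\ln|g(z)|\ \ge\ -2\sup_{\Delta_{3r}}\ln|f|+2i\ln(3r)+3\ln|h(0)|.
\]
Writing $f=z^i\tilde\phi\, g$ and using $|\tilde\phi(z)|\ge\prod_{j=1}^k|z-a_j|$ on $\Delta_r$ (the elementary bounds $|R^2-\overline{a_j}z|\le 2R^2$ and $2|a_j|\le 1$, exactly as in the proof of Lemma~\ref{holomorphic estimate 2}), I obtain
\[
\ln|f(z)|\ \ge\ \ln\!\prod_{\zeta}|z-\zeta|\ -\ 2\sup_{\Delta_{3r}}\ln|f|\ +\ 2i\ln(3r)\ +\ 3\ln|h(0)|,
\]
the product running over all $n$ zeros $\zeta$ of $f$ in $\overline\Delta_{2r}$ counted with multiplicity (the origin contributing the factor $|z|^i$).

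Finally, if $z\in\Delta_r$ lies in the bad set of the lemma, I would subtract this lower bound from the defining inequality to get $\ln\prod_\zeta|z-\zeta|<-\tfrac1{\ln(3/2)}\ln(1/\eta)\,\sup_{\Delta_{3r}}\ln|f|+i\ln(3r)$, and then feed in the Jensen estimate on $n$ to turn this into $\prod_\zeta|z-\zeta|<\eta^{\,n}$; Proposition~\ref{bloch cartan2}, applied with the $n$ centers (the origin repeated $i$ times, and the $a_j$) and $H:=2e\eta$, gives $\mu(\mathrm{bad\ set})\le\pi H^2=4\pi e^2\eta^2$. The step I expect to be the real work is the bookkeeping of the factor $z^i$: one must check that the three distinct places where $\ln(3r)$ and $\ln|h(0)|$ enter — the Jensen count of zeros, the passage from $\sup_{\Delta_{2r}}\ln|g|$ to $\sup_{\Delta_{3r}}\ln|f|$, and the factor $z^i$ in $f=z^i\tilde\phi g$ — combine to reproduce exactly the correction term $3(\ln|h(0)|+i\ln(3r))$ in the statement (paying attention to the sign and size of $i\ln(3r)$, which is negative since $r<1/3$), while the constant $4\pi e^2\eta^2$ and the coefficient $2+\tfrac1{\ln(3/2)}\ln(1/\eta)$ are inherited unchanged from Lemma~\ref{holomorphic estimate 2}.
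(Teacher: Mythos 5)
Your outline follows the paper's one-line recipe faithfully: the paper says to replace the Blaschke product $\phi$ of Lemma~\ref{holomorphic estimate 2} by $\psi(z)=\phi(z)\,z^i/(3r)^i$ and ``proceed as in that proof,'' and your $g=h/\tilde\phi$ is just $(3r)^{-i}\cdot(f/\psi)$, so the two constructions agree up to a constant. Your bookkeeping up to the inequality
\[
\ln|f(z)|\ \geq\ \ln\!\prod_{\zeta}|z-\zeta|\ -\ 2\sup_{\Delta_{3r}}\ln|f|\ +\ 2i\ln(3r)\ +\ 3\ln|h(0)|
\]
and the consequent $\ln\prod_\zeta|z-\zeta|<-\tfrac{\ln(1/\eta)}{\ln(3/2)}\sup_{\Delta_{3r}}\ln|f|+i\ln(3r)$ on the bad set is correct.

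The step you explicitly flag as ``the real work,'' however, does not close, and the gap is genuine (inherited from the paper's terse sketch, not introduced by you). The Jensen count you quote controls only the number $k$ of nonzero zeros, via $k\ln(3/2)\leq\sup_{\Delta_{3r}}\ln|f|-i\ln(3r)-\ln|h(0)|$; because $\ln(3r)<0$, this cannot be upgraded to the estimate $n\ln(3/2)\leq\sup_{\Delta_{3r}}\ln|f|$ on $n=i+k$ that the mechanism of Lemma~\ref{holomorphic estimate 2} requires. Substituting your Jensen bound into the target $\ln\prod_\zeta|z-\zeta|<-n\ln(1/\eta)$ leaves an unabsorbed term proportional to $i\ln(3r)$ whose sign works against you, and the needed inequality fails in general. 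Indeed, the statement of Lemma~\ref{holomorphic estimate 3} as written is already violated by $f(z)=z^i$: there $\sup_{\Delta_{3r}}\ln|f|=i\ln(3r)$, $\ln|h(0)|=0$, and the defining condition reduces to $\ln|z|<(1-c)\ln(3r)$ with $c=\ln(1/\eta)/\ln(3/2)$, which for any $\eta<2/3$ (so $c>1$) has a positive right-hand side; the ``bad'' set is then all of $\Delta_r$, whose area does not decay like $\eta^2$. So the coefficient of $i\ln(3r)$ in the statement cannot be the flat $3$, and the argument needs to be restructured, not merely bookkept. (This does not affect Theorem~\ref{main estimate}, whose proof in the paper uses only Lemma~\ref{holomorphic estimate 2} and the fullness reduction, not Lemma~\ref{holomorphic estimate 3}.)
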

\begin{proof} It suffices to change the $\phi(z)$ of the proof of  Lemma \ref{holomorphic estimate 2} with the function $\psi(z):=\phi(z)\cdot{{z^i}\over{(3r)^i}}$ and proceed as  in that proof. \end{proof}

We can now give the proof of Theorem \ref{main estimate}.

\begin{proof} {\it (of Theorem \ref{main estimate})}. We deal first with the case when the universal covering is $\Delta_1$. We can choose a universal covering $\alpha: \Delta_1\to M$ such that $\alpha(0)\in U$.

Standard formulas give that $d_{\Delta_1}(z,0)< \ln(2)$ if and only if $|z|< 1/3$. 
Consequently, since $U$ is small, We can find a compact closed set $B\in \alpha^{-1}(\overline U)$ having diameter (in the hyperbolic metric) strictly smaller then $\ln(2)$ and such that $\alpha|_B:B\to \overline U$ is surjective. In particular we can find $r<1/3$ such that $B\subset \Delta_{r}$. Denote by $W_0$ the subset $\alpha|_B^{-1}(W)\subset B$.

Observe that $\alpha^\ast(L)$  is isomorphic to the  trivial line bundle on $\Delta_1$. Thus $\alpha^\ast(L)=e\cdot\cO_{\Delta_1}$. Thus we can find positive constants $B_i$ such that, for every $z\in\Delta_{3/5}$, we have $B_1\leq \Vert e\Vert(z)\Vert\leq B_2$.

Let $s\in H^0_B(M,L^d)$ and let $w_0\in W_0$ a point such that $\Vert s\Vert_W=\Vert \alpha^\ast(s)\Vert(w_0)$.  We can choose a automorphism $\iota$ of $\Delta_1$, such that $\iota(0)=w_0$. Since the hyperbolic metric is invariant by the disk automorphisms, we can still find $r_1<1/3$ such that  $\iota^{-1}(B)\subset\Delta_{r_1}$.

Consequently since, for every measurable set $H\subset U$ we have that $\mu(H)\leq \mu((\iota\circ\alpha)^{-1}(H)\cap\iota^{-1}(B))$, we may suppose that $w_0=0$ it suffices to  give an upper estimate for the measure of the set of elements $z\in \Delta_{1/3}$ such that 
\begin{equation}
\ln\Vert \alpha^\ast(s)\Vert(z)\leq -C_1(\ln\left({{1}\over{\eta}}\right)+1)(\log^+\Vert s\Vert_M+d) +3 \ln\Vert\alpha^\ast(s)\Vert(0).
\end{equation}

Write $\alpha^{\ast}(s)=f\cdot e^d$ with $f$ a holomorphic function on $\Delta_1$. Consequently, since $z$ and $w_0$ are elements of $ \Delta_{1/3}$, we may find constants $A_i$, not depending on $z$,  $w_0$ or $s$,  such that
$\ln\Vert\alpha^\ast(s)\Vert (z)\geq \ln|f|(z)+A_1d$ and $\ln\Vert\alpha^\ast(s)\Vert (0)\leq \ln|f|(0)+A_2d$.  Moreover we can find a constant $A_3$ (independent on $z$, $w_0$ or $s$) such that
$\ln^+\Vert s\Vert_M\geq \ln|f|_{3r}+A_3\cdot d$.  

Consequently, choosing $C_1$ sufficiently big,  the Theorem reduces to the estimate of Lemma \ref{holomorphic estimate 2} and the conclusion follows in this case. 

When the universal covering $\tilde M$ is $C$, choose again a compact subset $B\subset\alpha^{-1}(\overline U)$ such that $\alpha|_B:B\to \overline U$ is surjective. Let $h$ be the diameter of $B$ (for the standard flat metric on $\C$). Choose a  disk $\Delta$ of radius bigger then $3h$ centered in a point of $B$ and proceed as before replacing $\Delta_1$ by the chosen disk $\Delta$. \end{proof}

\subsection{proof of Theorem \ref{S sets}} We start with the following easy observation: suppose that $B_1\subset M$ is a subset such that $\varphi(B_1)\cap S(X_K)\neq \emptyset$, then $B_1$ is a subset of type $S$ with respect to $\varphi$. It  suffices then to prove the following:

\begin{claim} If $U$ is a small open set in $M$ containing a subset $B_U$ of type $S$ with respect to $\varphi$ then $\varphi^{-1}(S(X_K))\cap U$ is full in $U$. \end{claim}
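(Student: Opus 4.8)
The plan is to use the area estimate of Theorem \ref{main estimate} together with the Borel--Cantelli Lemma \ref{borelcantelli} to show that almost every $z\in U$ fails the defining inequality of $S(X_K)$ only for finitely many sections, which means $\varphi(z)$ satisfies a Liouville-type inequality and hence lies in $S_a(X_K)$ for a suitable $a$. First I would fix an enumeration of the lattice $H^0(\cX,\cL^d)$ and, for each $d$ and each nonzero $s\in H^0(\cX,\cL^d)$, consider the set
\begin{equation}
B(s):=\left\{ z\in U\;/\; \ln\Vert\varphi^\ast(s)\Vert(z)\leq -C_1(\ln(1/\eta_{d,s})+1)(\log^+\Vert s\Vert+d)+3\ln\Vert s\Vert_{B_U}\right\},
\end{equation}
where $B_U$ is the given subset of type $S$, to which we apply Theorem \ref{main estimate} with $W=B_U$. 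The hypothesis that $U$ is small (and that, when $\tilde M=\Delta_1$, its diameter is less than $\ln 2$) is exactly what is needed to invoke that theorem, and it gives $\mu(B(s))\leq C_2\eta_{d,s}^2$.

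The next step is to choose the parameters $\eta_{d,s}$ so that the total measure is summable. Since by \eqref{eq:counting1} the number of $s\in H^0(\cX,\cL^d)$ with $\log^+\Vert s\Vert\le k$ grows like $C^{d^{N+1}}(e^k)^{d^N}$, I would set $\eta_{d,s}$ to decay like, say, $2^{-(d+\log^+\Vert s\Vert+1)^2}$ or any comparable choice making
\begin{equation}
\sum_{d\ge 1}\;\sum_{s\in H^0(\cX,\cL^d)\setminus\{0\}}\mu(B(s))\le C_2\sum_{d\ge1}\sum_{s}\eta_{d,s}^2<\infty.
\end{equation}
Then Borel--Cantelli (Proposition \ref{borelcantelli}) applied to the countable family $\{B(s)\}$ shows that the set $E$ of $z\in U$ belonging to infinitely many $B(s)$ has measure zero. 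For $z\in U\setminus E$ with $\varphi(z)$ not on the base locus, there is a threshold beyond which $z\notin B(s)$, i.e.
\begin{equation}
\ln\Vert\varphi^\ast(s)\Vert(z)\ge -C_1(\ln(1/\eta_{d,s})+1)(\log^+\Vert s\Vert+d)+3\ln\Vert s\Vert_{B_U};
\end{equation}
combining this with the fact that $B_U$ is of type $S_a$ (which bounds $\ln\Vert s\Vert_{B_U}$ from below by $-A(\log^+\Vert s\Vert+d)^a$) and the polynomial-in-$(d+\log^+\Vert s\Vert)$ size of $\ln(1/\eta_{d,s})$, one gets a Liouville inequality of the form $\ln\Vert\varphi^\ast(s)\Vert(z)\ge -A'(\log^+\Vert s\Vert+d)^{a'}$ valid for all but finitely many $s$; the finitely many exceptional sections can be absorbed by enlarging the constant $A'$. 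Hence $\varphi(z)\in S_{a'}(X_K)$, so $\varphi^{-1}(S(X_K))\cap U$ contains $U\setminus E$ and is therefore full.

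The main obstacle, and the point requiring care, is the bookkeeping in the second step: one must choose the $\eta_{d,s}$ small enough to beat the doubly-exponential-in-$d$ count of sections in \eqref{eq:counting1}, yet not so small that $\ln(1/\eta_{d,s})$ destroys the polynomial control needed to conclude $\varphi(z)\in S_a(X_K)$ for \emph{some} finite $a$. A secondary subtlety is handling the sections with $\varphi^\ast(s)$ vanishing at $z$ or vanishing on a set of positive measure (the latter cannot happen for $z$ outside a measure-zero analytic set since $\varphi$ has Zariski-dense image, so $\varphi^\ast(s)\not\equiv 0$); one restricts attention to $z$ avoiding the countable union of zero sets $\{\varphi^\ast(s)=0\}$ and $E$, which is still full. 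Finally, one should note that the constant $a'$ obtained depends only on $a$, $N=\dim X_K$ and the constants $C_i$, not on the individual $z$, which is what makes the conclusion uniform. Once the parameters are fixed the rest is the routine combination of the three cited ingredients.
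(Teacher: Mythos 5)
Your proposal follows essentially the same route as the paper's proof: apply Theorem \ref{main estimate} with $W=B_U$, tune $\eta_{d,s}$ so that $\sum_{d,s}\mu(B(s))$ converges, invoke Borel--Cantelli, and for $z$ outside the resulting null set deduce a Liouville inequality of uniform exponent, hence $\varphi(z)\in S_{a'}(X_K)$. There is, however, a quantitative slip in exactly the step you flag as delicate: the illustrative choice $\eta_{d,s}=2^{-(d+\log^+\|s\|+1)^2}$ does \emph{not} beat the count in \eqref{eq:counting1} once $N=\dim X_K\geq 2$, because the number of sections of $\cL^d$ of norm below any fixed bound already grows like $\exp\bigl(O(d^{N+1})\bigr)$, while $\eta_{d,s}^2$ only decays like $\exp\bigl(-O(d^2)\bigr)$ at small $\|s\|^+$. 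One needs $\ln(1/\eta_{d,s})$ to grow like $(d+\log^+\|s\|)^{N+1}$, e.g.\ $\eta_{d,s}^2=C_5^{-d^{N+1}}(\|s\|^+)^{-C_6 d^{N}}$, which is what the paper's proof uses implicitly; this still leaves $\ln(1/\eta_{d,s})$ polynomial in $d+\log^+\|s\|$, so the argument closes with exponent $\max\{a,N+2\}$. Your remark that one should also discard the (measure-zero) countable union of zero loci $\{\varphi^\ast(s)=0\}$ before absorbing the finitely many exceptional sections into the constant is a genuine point of care that the paper's own proof leaves tacit.
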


Indeed, by hypothesis, we can choose a small open $U_0$ set containing $B$ and $\varphi^{-1}(S(X_K))\cap U_0$ will be full in $U_0$, in particular it will be dense in $U_0$. Choose a covering $\cU$ of  $M$ made by countably many small open sets. For each open set $U$ belonging to $\cU$ and such that $U\cap U_0\neq\emptyset$, we will have that  $\varphi^{-1}(S(X_K))\cap U$ is full on $U$. For each open set $U_1\in \cU$   intersecting one of the open sets which intersect $U_0$, we will have that$\varphi^{-1}(S(X_K))\cap U_1$ is full in $U_1$ etc.  Consequently, by connectedness of $M$, the conclusion follows. 

\begin{proof} {\it (Of the Claim)}. Let $d\geq 1$ and let $s\in H^0(\cX, \cL^d)\setminus\{ 0\}$. Since, $B$ is of type $S$ with respect to $\varphi$, we can find positive constants $C_2$ and $a$ such that
$\log\Vert s\Vert_B\geq -C_2(\log^+\Vert s\Vert +d)^a$.

Choosing the constants $C_i$ suitably  and applying Proposition \ref{main estimate} (and the fact that $B$ is of type $S$ with respect to $\varphi$) we find that the area $V(s)$ of the set 
\begin{equation}
\left\{ z\in U\;\;/\;\; \log\Vert \varphi^\ast(s)\Vert(z)\leq-C_3(\log^+\Vert s\Vert +d)^{\sup\{a, N+1\}}\right\}
\end{equation} is smaller then ${{C_4}\over{{C_5^{d^{N+1}}\cdot (\Vert s\Vert^+)^{C_6\cdot d^N}}}}$.

Consequently we find the following estimate:

\begin{eqnarray*}
&&\sum_{d=1}^\infty\sum_{s\in H^0(\cX;\cL^d)}V(s) \\
&&\leq \sum_{d=0}^\infty\sum_{s\in H^0(\cX;\cL^d)} {{C_4}\over
{(\Vert s\Vert^+)^{C_6d^{n}}C_5^{d^{n+1}}}} \;\;\;\;\;\;\;\text {by  the estimate above}\\
&& =\sum_{d=1}^\infty\sum_{N=1}^\infty\sum_{  {s\in H^0(\cX;\cL^d)}\atop
{N\leq \Vert s\Vert_+<N+1}}  {{C_4}\over
{(\Vert s\Vert^+)^{C_6d^{n}}C_5^{d^{n+1}}}}\\
&&\leq \sum_{d=1}^\infty\sum_{N=1}^\infty{{C_4C_3^{d^{n+1}}N^{d^n}}\over{N^{C_6d^n}C_5^{d^{n+1}}}}\;\; \;\;\;\;\;\;\;\;\;\;\;\;\;\;\text {by formula \ref{eq:counting1}}.
\end{eqnarray*}

The last series converges as soon as we take $C_5$ and $C_6$ sufficiently big. Thus we may apply the Borel--Cantelli Lemma  \ref{borelcantelli}  and the conclusion of the Theorem follows.  \end{proof}

\

\section{Some concluding remarks}\label{conclusion}

\

It is now evident that there is a strict relationship between rational and $S$ points contained in a Riemann surface inside a projective variety. Essentially this relationship may be resumed in the slogan: "Many rational points imply no $S$ points and a single $S$ point implies many $S$ points and few rational points". Moreover, even when we know that the growth of rational point s of height less then $T$ Is just sub exponential (and not less than that), we know that "for may values of $T$" the growth is just polynomial. 

From our point of view, a consequence of the results found in this paper is that the distribution of rational points on analytic sub varieties of a projective variety is still very mysterious and many natural questions arise. For instance, a  natural list of question could be:

a) Suppose that $A_U(T)$ grows not less than sub exponentially, can we estimate how big is the set of $T$'s for which $A_U(T)$ is "big"?

Theorem \ref{rare distribution 1} proved that, even if $A_U(T)$ grows not less then sub exponentially,  we can find intervals as big as we want for which $A_U(T)$ is bounded by a fixed polynomial in $T$. Given a polynomial  $P(t)=C\cdot t^\delta+\cdots$, how wide can be an interval for which $A_U(T)\geq P(T)$? It is possible that, for high values of $\delta$, these intervals are quite small.

b) Can we prove a higher dimensional analogue of Theorem \ref{rare distribution 1}?

As quoted in the introduction, Pila and Wilkie proved an estimate for the analogue of the function $A_U(T)$ for definable sets in a $o$--minimal structure. Roughly speaking, given a definable set $X$, if we remove from it an obvious "algebraic part" (over which there are, most likely, too many rational points"), the set of rational points of the remaining part - called the "transcendental part of $X$ - satisfy an estimate similar to the Bombieri - Pila's. One may expect that, for many values (as in Theorem \ref{rare distribution 1} for instance) of $T$, the number of rational points in the transcendental part of $X$ is just polynomial in $T$.

c) Can we characterize a priori analytically and arithmetically Riemann surfaces which contain few (a lot) of rational points? and same questions for those which intersect the set $S(X_K)$?

Riemann surfaces which are almost all contained in $S(X_K)$ are generic, in many senses. They contain few rational points. One expect that Riemann surfaces which contain many rational points are in some way special so it is possible that one can detect them by some general property which can be computed. Known examples of these surfaces are very artificial, Theorems \ref{foliations} and \ref{foliations and S points} tell us that other examples cannot be constructed as solutions of differential equations, thus at the moment we have a lack of methods to construct them. 

d) Can we detect the structure of $S$ points on a variety from its geometry?

The classical conjecture by Lang asserts that varieties of general type should contain very few rational points (the set of rational points shouldn't be Zariski dense). Is it possible that the set of $S$ points on varieties of general type has some special and peculiar to varieties of general type properties? This would be in the spirit of the slogan quoted at the beginning of this section. We would like to observe that the introduction of small open sets in sub section \ref{section small sections} seems to go in this direction: Theorem \ref{S sets} requires  small open sets and at the moment we do not know if, in the hyperbolic case, the "smallness" hypothesis is necessary. 

\

\end{document}